\newtheorem{theorem}{Theorem}[section]
\newtheorem{corollary}[theorem]{Corollary}
\newtheorem{lemma}[theorem]{Lemma}
\theoremstyle{definition}
\newtheorem{definition}[theorem]{Definition}
\newtheorem{proposition}[theorem]{Proposition}
\theoremstyle{remark}
\newtheorem*{remark}{Remark}
\title{Factor system for graphs and combinatorial HHS}
\author{Jihoon Park}
\begin{document}

\maketitle

\begin{abstract}
 We relaxe the constraint on the domains of combinatorial HHS machinery so combinatorial HHS machinery works for most cubical curve graphs. As an application we extend the factor system machinery of the CAT(0) cube complex to the quasi-median graphs.  
 
\end{abstract}
\section{Introduction}

 A hierarchically hyperbolic space(HHS) is a framework that simultaniously explains geometry of mapping class groups of finite type surfaces and CAT(0) cube complexes. After Behrstock--Hagen--Sisto introduced the definition and properties of HHSs \cite{BHS17,BHS19}, it was found that HHS structure is useful to study its coarse geometry and group structure. Although HHS structure covers wide range of spaces and groups (\cite{BHS17,BHS19,BR18,Hug22,BR20,HS20,RS20}), verifying whether a given space or group admits HHS structure directly from the definition is usually challenging.\par
 To remedy this, Behrstock--Hagen--Martin--Sisto \cite{BHMS20} provided a simpler criterion for hierarchical hyperbolicity, called the combinatorial hierarchically hyperbolic space (CHHS). This criterion has proven useful to study HHS structures of various groups, see \cite{BHMS20,DDLS21,HMS21,Rus21}. Also the CHHS structure can be formulated from the HHS structure under mild assumptions \cite{HMS23}, and understanding the CHHS machinery partly reveals mysterious nature of the HHS structure. The axioms of the CHHS, however, may not be optimal one as it does not cover some CAT(0) cube complexes with well-studied counterparts of the curve graphs; crossing and contact graphs, even when it admits an HHS structure. (One can find detailed discussion on the CAT(0) cube complex for which the CHHS machinery fails in [Section 10 of \cite{HMS23}].) \par 
 Nevertheless, when we consider a CAT(0) cube complexes $X$ with a factor system, the system of crossing graphs of each domains in the factor system is encoded in the crossing graph $\Delta X$ as intersections of link graphs. If we relax the constraint on the domains of the CHHS machinery by intersections of link graphs rather than the link graph of some clique graphs, then such system of subgraphs provides more adaptible combinatorial takes of HHS structures. We will call such system of subgraphs a $graph$ $factor$ $system$.  We can extend the axioms of the CHHS machinery to the simplicial graphs with a graph factor system. Now we state our main result: 
 \begin{theorem}[Main theorem]
 	Let $X$ be a simplicial graph with a graph factor system $\mathfrak{F}$ and let $W$ be an $X$-graph. If $(X,W,\mathfrak{F})$ satisfies the extended axioms of CHHS, then $(W,\mathfrak{F})$ admits an HHS structure.
 \end{theorem}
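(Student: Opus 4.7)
The strategy is to parallel the proof of the original combinatorial HHS theorem of Behrstock--Hagen--Martin--Sisto \cite{BHMS20}, replacing the role of link graphs of cliques by the elements of the graph factor system $\mathfrak{F}$. Concretely, the plan is to construct an HHS structure on $W$ by taking the index set to be $\mathfrak{F}$ (modulo the natural equivalence), defining $\sqsubseteq$, $\perp$, and $\pitchfork$ from the containment and join structure of subgraphs of $X$, and assigning to each $F \in \mathfrak{F}$ the hyperbolic space obtained from the $W$-graph restricted to the vertices supported on $F$.

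First I would set up the HHS data. For each $F\in\mathfrak F$, the associated hyperbolic space $\mathcal{C}F$ is the piece of $W$ sitting over $F$, whose hyperbolicity is guaranteed by the extended axioms. Nesting $F\sqsubseteq F'$ corresponds to $F$ lying in (an iterated link of) $F'$, orthogonality $F\perp F'$ corresponds to $F$ and $F'$ spanning a join in $X$, and transversality is the default. The projection $\pi_F\colon W\to\mathcal{C}F$ would be defined via the combinatorial gate map used in \cite{BHMS20}: send $w\in W$ to a simplex in $F$ that is a coarse closest point, and define relative projections $\rho^F_{F'}$ by restricting the same construction along the nesting or transversality relation.

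Next I would verify the HHS axioms one at a time, importing as much as possible from \cite{BHMS20}. Finite complexity is built into the definition of a graph factor system. Partial realization, the container axiom, and uniqueness reduce to combinatorial statements about intersections of link graphs, which the extended CHHS axioms encode. Bounded geodesic image and large links should follow, as in \cite{BHMS20}, from hyperbolicity of each $\mathcal{C}F$ combined with coarse Lipschitz control on the gate maps, together with an induction on the complexity of $\mathfrak{F}$.

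The main obstacle is the consistency and bounded geodesic image package in this enlarged index set. In the clique-link setting, a domain is tied to a specific simplex, and projections between nested domains are controlled directly by the link relation; once one passes to arbitrary intersections of link graphs, a single $F\in\mathfrak F$ can sit inside many different link graphs without being the link of any clique, and one must show that the gate map to $F$ factors coherently through every ambient link. The key point will be to exploit the intersection-closure properties built into the definition of $\mathfrak F$ to reduce each projection-compatibility statement to an iterated instance of the clique-link case, and then to propagate the corresponding inequalities by induction on the rank in $\mathfrak F$.
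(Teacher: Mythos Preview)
Your high-level strategy matches the paper's: index set $\mathfrak{F}$, nesting by containment, orthogonality by join, projections by a closest-point construction, and verification of the HHS axioms via induction on the complexity of $\mathfrak{F}$. Several axioms (finite complexity, containers, partial realization) do go through essentially as you indicate.

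There is, however, a genuine gap in how you propose to define the hyperbolic spaces and projections. You say $\mathcal{C}F$ is ``the piece of $W$ sitting over $F$'' and that $\pi_F$ is the ``combinatorial gate map used in \cite{BHMS20}''. In the original CHHS setting, nested domains are links of simplices, so the simplex itself serves as a cone-point and the gate map has something to project to. In the graph factor system setting this fails: a domain $F\in\mathfrak{F}$ need not be the link of any clique, so there is no simplex to gate onto, and nested subdomains are not automatically coned off inside $F$. The paper handles this by adjoining, for every proper $F'\in\mathfrak{F}$, a new \emph{projection vertex} $b_{F'}$ and building the augmented graph $CF$ on $V(F)\cup\{b_{F'}:F'\subsetneq F\}$; this artificial cone-point is what makes the relative projections $\rho^F_{F'}$ bounded. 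Your plan to ``reduce each projection-compatibility statement to an iterated instance of the clique-link case'' does not work without this device, because no such clique exists in general.

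The second missing piece is the hyperbolicity of the \emph{complement graph} $Y_F$ (the ambient augmented graph with the orthogonal part of $F$ and the relevant projection vertices deleted). The extended axioms only give you that $CF$ is hyperbolic and QI-embeds in $Y_F$; they do \emph{not} assert that $Y_F$ is hyperbolic. But the projection $\pi_F$ is defined as the coarse nearest-point projection $Y_F\to CF$, and for this to be coarsely Lipschitz with bounded images you need $Y_F$ hyperbolic and $CF$ quasiconvex in it. Establishing hyperbolicity of $Y_F$ is the main technical lemma of the paper: it requires introducing leveled complement graphs $Y^k_F$ stratified by co-level, showing $Y^0_F$ is quasi-isometric to $CX$, and then inductively deleting projection vertices one co-level at a time using a link-hyperbolicity criterion (the analogue of \cite[Lemma~3.1]{BHMS20}). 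Without this lemma, neither the projections nor the Strong BGI argument that drives consistency, bounded geodesic image, large links, and uniqueness can get off the ground.
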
 
 The CHHS with factor system covers most well-studied HHS with curve graph models including the curve graph of surface and the crossing graph of the CAT(0) cube complexes, and we believe that the CHHS with factor system could be useful to study new HHS structure of various spaces, especially for the spaces having a hierarchical system of subspaces. \par 
 As an application, we extend the factor system criterion of the CAT(0) cube complex developed by \cite{BHS17} to the quasi-median graphs. 
\section{Definitions and background}
\subsection{Graphs}
Let $X$ be a simplicial graph.
\begin{definition}[Clique, join, link]
	We say that $Y\subset V(X)$ is a $clique$ in $X$ if every pair of vertices in $Y$ are adjacent in $X$. If there is no confusion, we also mean clique by the complete subgraph of $X$ induced by the clique. A clique in $X$ is maximal if it is not a proper subgraph of other clique of $X$.
	Let $Y, Z\subset X$ be two disjoint induced subgraphs of $X$. If every vertex of $Y$ is adjacent to every vertex of $Z$, then the $join$  of $Y$ and $Z$, denoted by $Y\ast Z$, is the induced subgraph of $X$ spanned by the vertex set $Y^{0}\cup Z^{0}$. Given a subgraph $Y$ of $X$, the $link$ of $Y$, denoted by $lk(Y)$, is the induced subgraph of $X$ spanned by the vertices of $X$ that are adjacent to all the vertices of $Y$.
\end{definition}
\subsection{Hierarchically hyperbolic space}
    We recall from \cite{BR20} the definition of the hierarchically hyperbolic space which is devided into structural information part, called the $proto$-$hierarchy \ structure$, and geometric information of proto-hierarchy structure to induce HHS.
 	\begin{definition}
 		Let $X$ be a quasi-geodesic space and $E>0$. An $E$-$proto$-$hierarchy$ $structure$ on $X$ is an index set $\mathfrak{S}$ and a set $\{ CU : U\in \mathfrak{S} \}$ of geodesic spaces $(CU, d_{U})$ such that the following conditions are satisfied. \par 
 		
 		\begingroup
 		\renewcommand\labelenumi{(\theenumi)}
 		\begin{enumerate}
 			\item (Projections.) There is a set $\{ \pi_{U} : X \rightarrow 2^{CU} \}$ of $projections$ sending points in $X$ to sets of diameter bounded by  $E$ in the various $U\in \mathfrak{S}$. Moreover, for all $U\in\mathfrak{S}$, the coarse map $\pi_{U}$ is $(E,E)$-coarsely Lipschitz and $CU\subseteq N_{E}(\pi_{U}(X))$ .
 			\item (Nesting.) $\mathfrak{S}$ is equipped with a partial order $\sqsubseteq$, and either $\mathfrak{S}=\emptyset$ or $\mathfrak{S}$ contains a unique $\sqsubseteq$-maximal element. When $U\sqsubseteq V$, we say $U$ is $nested$ in $V$. For each $U\in\mathfrak{S}$, we denote by $\mathfrak{S}_{U}$ the set of all  $V\in\mathfrak{S}$ such that $V\sqsubseteq U$. Moreover, for all $U,V\in\mathfrak{S}$ with $U\sqsubsetneq V$ there is a specified subset $\rho^{U}_{V}\subset CV$ with diam$_{CV}(\rho^{U}_{V})\leq  E$. There is also a projection map $\rho^{V}_{U}$ from $CV\setminus N_{E}(\rho^{U}_{V})$ into $2^{CU}$.
 			\item (Orthogonality.) The set $\mathfrak{S}$ has a symmetric, anti-reflexive relation $\perp$ called $orthogonality$. Whenever $U\sqsubset V$ and $V\perp W$, we require that $U\perp W$. Also, if $U\perp V$, then $U$ and $V$ are not $\sqsubseteq$-comparable. 
 			\item (Transversality.) If $U,V\in\mathfrak{S}$ are not orthogonal and neither is nested in the other, then we say $U,V$ are $transverse$, denoted by $U \pitchfork V$. Moreover, if $U\pitchfork V$, then there are non-empty sets $\rho^{U}_{V}\subseteq CV$ and $\rho^{V}_{U}\subseteq CU$, each of diameter at most $E$
 		\end{enumerate}
 	
 	\endgroup
 	\noindent
 	\end{definition}
 	
 	We use $\mathfrak{S}$ to denote the proto-hierarchy structure, including the index set $\mathfrak{S}$, collection of associated spaces $\{ CU : U \in \mathfrak{S} \}$, projections $\{\pi_{U} : U\in\mathfrak{S} \}$ and relations $\sqsubseteq, \perp, \pitchfork$. We call the elements of $\mathfrak{S}$ the $domains$ of $\mathfrak{S}$ and call the set $\rho^{U}_{V}$ the $relative \ projection$ from $U$ to $V$. The number $E$ is called the $hierarchy \ constant$ for $\mathfrak{S}$.
 	
 	\begin{definition}
 		An $E$-proto-hierarchy structure $\mathfrak{S}$ on a quasi-geodesic space $\mathcal{X}$ is an $E$-$hierarchically \ hyperbolic \ space \ structure$($E$-HHS structure) on $\mathcal{X}$ if it satisfies the following additional conditions. 

\begingroup
\renewcommand\labelenumi{(\theenumi)}
\begin{enumerate}
	\item (Hyperbolicity.) For each $U\in\mathfrak{S}$, $CU$ is $E$-hyperbolic.
	\item (Finite complexity.) Any set of pairwise $\sqsubseteq$-comparable elements has cardinality at most $E$.
	
	\item (Containers.) If $T\in\mathfrak{S}$ and $U\in\mathfrak{S}_{T}$ for which $\{ V\in\mathfrak{S}_{T} \mid V\perp U \}\neq \emptyset$, then there exists $W\in\mathfrak{S}_{T}-\{T\}$, called the $container \ of \ U \ in \ T$, such that whenever $V\perp U$ and $V\sqsubseteq T$, we have $V\sqsubseteq W$. 
	
	\item (Bounded geodesic image.) For all $U,V\in\mathfrak{S}$ with $V\sqsubsetneq U$ and all $x,y\in\mathcal{X}$, if $d_{V}(\pi_{V}(x),\pi_{V}(y))\geq E$, then every $CU$-geodesic from $\pi_{U}(x)$ to $\pi_{U}(y)$ must intersect the $E$-neighbourhood of $\rho^{V}_{U}$.
	\item (Consistency.) If $U\pitchfork V$, then
	\begin{center}
		min$\{d_{V}(\pi_{V}(x),\rho^{U}_{V}), d_{U}(\pi_{U}(x),\rho^{V}_{U})  \}\leq E $
	\end{center}
   for all $x\in \mathcal{X}$. \par 
   Also if $U\sqsubsetneq V$, then 
   \begin{center}
   	min$\{d_{V}(\pi_{V}(x),\rho^{U}_{V}), diam_{U}(\pi_{U}(x)\cup \rho^{V}_{U}(\pi_{V}(x))  \}\leq E $
   	\end{center}
   for all $x\in \mathcal{X}$.
   Finally, if $U\sqsubseteq V$, then $d_{W}(\rho^{U}_{W},\rho^{V}_{W})\leq E$ whenever $W\in\mathfrak{S}$ satisfies either $V\sqsubsetneq W$ or $V\pitchfork W$ and $W$ is not orthogonal to $U$.
 \item (Large links.) Let $U\in \mathfrak{S}$ and $x,y\in \mathcal{X}$. Let $N=E d_{U}(\pi_{U}(x),\pi_{U}(y))+E$. Then there exists $\{ T_{i} \}_{i=1,...,\lfloor N\rfloor}\subseteq \mathfrak{S}_{U}-\{U\}$ such that for all $T\in\mathfrak{S}_{U}-\{U\}$, either $T\sqsubseteq T_{i}$ for some $i$, or $d_{T}(\pi_{T}(x),\pi_{T}(y))\leq E$. 
 \item (Partial Realization.) If $\{ U_{j}\}$ is a finite collection of pairwise orthogonal elements of $\mathfrak{S}$ and let $p_{j}\in  CU_{j}$ for each $j$, then there exists $x\in \mathcal{X}$ so that: 
 \begin{itemize}
 	\item $d_{U_{j}}(\pi_{U_{j}}(x), p_{j})\leq E$ for all $j$,
 	\item for each $j$ and each $V\in\mathfrak{S}$ such that $U_{j}\sqsubsetneq V$ or $U_{j}\pitchfork V$, we have $d_{V}(\pi_{V}(x),\rho^{U_{j}}_{V})\leq E$.
 \end{itemize}
\item (Uniqueness.) For each $\kappa\geq 0$, there exists $\theta_{u}=\theta_{u}(\kappa)$ such that if $x,y\in \mathcal{X}$ and $d_{\mathcal{X}}(x,y)\geq \theta_{u}$, then there exists $U\in\mathfrak{S}$ such that $d_{U}(\pi_{U}(x),\pi_{U}(y))\geq \kappa$.
\end{enumerate}
\endgroup
\noindent

\end{definition}

 We recall a useful result of HHS for later use, the realisation theorem which explains how the points in HHS and its coordinate tuples are related.

\begin{definition}[Consistent tuple]
	Let $\kappa\geq 0$ and let $\vec{b}\in \Pi_{U\in \mathfrak{S}} 2^{CU}$ be a tuple such that for each $U\in\mathfrak{S}$, the $U\textnormal{-}$coordinate $b_{U}$ has a diameter bounded above by $\kappa$. Then $\vec{b}$ is called a $\kappa \textnormal{-} consistent \ tuple$ if for all $V,W\in\mathfrak{S}$, we have \par 
	
	\begin{center}
		min$\{d_{V}(b_{V},\rho^{W}_{V}), d_{W}(b_{W},\rho^{V}_{W})  \}\leq \kappa $
	\end{center}
	whenever $V\pitchfork W$, and \par 
	
	\begin{center}
		min$\{d_{W}(b_{W},\rho^{V}_{W}), diam_{V}(b_{V}\cup \rho^{W}_{V}(b_{W}))  \}\leq \kappa $
	\end{center}
	whenever $V\sqsubsetneq W$.

\end{definition}

  For each point $x\in X$, the tuple $(\pi_{U}(x))_{U\in\mathfrak{S}}$ is always consistent. Conversely, all consistent tuples are coarsely the coordinate tuple of points in $X$.  
\begin{theorem}[The realisation of consistent tuples, \cite{BHS19}]
	Let $(X,\mathfrak{S})$ be an HHS. There exists a function $\theta : [0,\infty)\rightarrow [0,\infty)$ so that if $(b_{U})_{U\in\mathfrak{S}}$ is a $\kappa$-consistent tuple, then there exists a point $x\in X$ such that $d_{U}(x,b_{U})\leq \theta(\kappa)$ for all $U\in\mathfrak{S}$.
\end{theorem}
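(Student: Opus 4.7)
The plan is to induct on the complexity $N$ of $\mathfrak{S}$, which is at most $E$ by finite complexity. When $\mathfrak{S}$ consists only of its $\sqsubseteq$-maximal element $S$, any $x\in X$ with $\pi_{S}(x)$ within $E$ of $b_{S}$ works, and such $x$ exists because $\pi_{S}(X)$ is $E$-dense in $CS$. This handles the base case with $\theta_{1}(\kappa) = E + \kappa$.

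For the inductive step, let $S$ denote the $\sqsubseteq$-maximal element. First use Partial Realization on the singleton $\{S\}$ and a point of $CS$ near $b_{S}$ to obtain $z_{0}\in X$ with $d_{S}(\pi_{S}(z_{0}), b_{S})\leq E$. For every $U \sqsubsetneq S$ whose relative projection $\rho^{U}_{S}$ lies sufficiently far from $b_{S}$, the consistency of the given tuple forces $b_{U}$ close to $\rho^{S}_{U}(b_{S})$, while the always-consistent coordinate tuple of $z_{0}$ forces $\pi_{U}(z_{0})$ close to $\rho^{S}_{U}(\pi_{S}(z_{0}))$; since $\pi_{S}(z_{0})\approx b_{S}$, these combine to give $\pi_{U}(z_{0})\approx b_{U}$ for free. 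The remaining \emph{bad} domains, where $\rho^{U}_{S}$ is near $b_{S}$, are captured by applying Large Links at $S$ to $z_{0}$ and a carefully chosen auxiliary point, yielding a finite family $T_{1},\ldots,T_{k} \sqsubsetneq S$ such that every bad $U$ is $\sqsubseteq$-nested into some $T_{i}$. Each restricted tuple $(b_{U})_{U\sqsubseteq T_{i}}$ is still $\kappa$-consistent and $\mathfrak{S}_{T_{i}}$ has strictly smaller complexity, so the inductive hypothesis supplies $y_{i}\in X$ with $d_{U}(y_{i}, b_{U})\leq \theta_{N-1}(\kappa)$ for all $U \sqsubseteq T_{i}$.

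The final combining step iterates Partial Realization over orthogonal subfamilies of the $T_{i}$'s, uses Containers to place simultaneously orthogonal pieces at appropriate locations, and invokes Consistency to handle transverse and nested pairs among the $T_{i}$'s, producing a single $x\in X$ whose projection is close to $b_{U}$ for every $U\in \mathfrak{S}$. Bounded Geodesic Image ensures that the perturbations of $z_{0}$ across sub-domains do not drag $\pi_{S}(x)$ away from $b_{S}$.

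The main obstacle is this combining step. Each $y_{i}$ realizes the coordinates nested in $T_{i}$, but its projections to other domains are uncontrolled, so we cannot simply take one of the $y_{i}$. We must construct $x$ as a simultaneous adjustment of $z_{0}$ whose error with respect to each $b_{T_{i}}$, and with respect to $b_{U}$ for every $U$ transverse or orthogonal to the $T_{i}$'s, remains uniformly bounded. Tracking how an adjustment in one coordinate propagates through the pairwise relations by the consistency inequality $\min\{d_{V}(b_{V}, \rho^{U}_{V}), d_{U}(b_{U}, \rho^{V}_{U})\}\leq \kappa$ when $U\pitchfork V$, and its analogue for $U\sqsubsetneq V$, is the combinatorial heart of the argument. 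It produces $\theta(\kappa)$ as an iterated function of $\kappa$, $E$, and $N$.
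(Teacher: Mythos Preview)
The paper does not prove this theorem; it is quoted from \cite{BHS19} as a background result and used later as a black box (in the Large Links and Uniqueness verifications). There is therefore no proof in the paper to compare your proposal against.

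On its own merits, your outline follows the standard shape of the argument in \cite{BHS19}: induct on complexity, handle the domains whose $\rho^{U}_{S}$ are far from $b_{S}$ for free via consistency, and collect the remaining ones under finitely many proper subdomains to which the inductive hypothesis applies. But what you have written is a sketch, not a proof, and you say so yourself: you call the combining step ``the main obstacle'' and then describe it only at the level of slogans (``iterates Partial Realization over orthogonal subfamilies'', ``uses Containers'', ``invokes Consistency''). Partial Realization only lets you prescribe coordinates at a \emph{pairwise-orthogonal} family, whereas the $T_{i}$ produced by Large Links are in general transverse or nested among themselves; you give no mechanism for merging the separate points $y_{i}$ into a single $x$. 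Your invocation of Large Links is also underspecified: the axiom takes as input two points of $X$, and ``a carefully chosen auxiliary point'' is doing unexplained work. If you intend this as a proof rather than a road map, the combining step needs to be written out in full.
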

  Note that uniqueness condition for $(X,\mathfrak{S})$ implies that the $realisation \ point \ x$ for a consistent tuple $(b_{U})_{U\in\mathfrak{S}}$ provided by the realisation theorem, is coarsely unique.

 	\section{Combinatorial hierarchically hyperbolic factor system}
 	\subsection{Graphs with factor system and the axioms of CHHS}
 	In this subsection, we define a factor system on a simplicial graph $X$ and extend the notions and axioms of the CHHS theory to our setting.
 	\begin{definition}
 		Let $X$ be a simplicial graph. We define a $graph$ $factor$ $system$, denoted by $\mathfrak{F}$, as a collection of induced subgraphs of $X$ satisfying that \par
 	1. $X\in \mathfrak{F}$. \par 
 	2. For every vertex $v\in X$, $lk(v)\in\mathfrak{F}$. \par 
 	3. Whenever $F_{1}, F_{2}\in\mathfrak{F}$, $F_{1}\cap F_{2}\in \mathfrak{F}$ if it is non-empty. \par 
 	4. There exists $n>0$ such that if there exists a chain $F_{1}\subsetneq F_{2} \dots \subsetneq F_{k}$ of subgraphs in the system $F_{i}\in \mathfrak{F}$, $1\leq i\leq k$, it satisfies $k<n$. We call this $n$ the complexity of $\mathfrak{F}$. \par 
 	\end{definition}

 	\begin{definition}
 		An $X$-$graph$ is any graph $W$ whose vertex set is the set of all maximal cliques of $X$. We say that two maximal cliques of $X$ are $W$-$adjacent$ if the corresponding vertices of $W$ are adjacent in $W$.
 	\end{definition}
 	
 	The system of subgraphs $\mathfrak{F}$ is the candidate of an HHS structure for the metric graph $W$. For this reason we often refer to each subgraph in the system $\mathfrak{F}$ as a $domain$. For each domain $F\in\mathfrak{F}$, we also denote the induced system of subgraphs by $\mathfrak{F}_{F}=\{F'\in\mathfrak{F}\mid F'\subseteq F\}$.

 	\begin{definition}[Projection vertices, projection parts of domains, augmented graphs]
 		Let $X$ be a simplicial graph with a graph factor system $\mathfrak{F}$ and $W$ be an $X$-graph. For each non-maximal domain $F\in\mathfrak{F}-\{X\}$, we associate a new vertex $b_{F}$, called the $projection \ vertex \ for \ F$. Also denote the collection of all projection vertices of non-maximal domains by $B=\{ b_{F} \mid F\in \mathfrak{F}-\{ X \}  \}   $ and for each domain $F\in\mathfrak{F}$, denote the collection of all induced projection vertices by $B_{F}=\{ b_{F'} \mid F'\in \mathfrak{F}_{F}-\{F\} \}   $. \par 
 		If two domains $F,F'\in\mathfrak{F}$ satisfy $F'\subseteq lk(F)$, then we call $F$ and $F'$ are $orthogonal$ and denote by $F\perp F'$. In particular, if $lk(F)$ is non-empty, then $lk(F)$ is a domain and we denote this domain by $F^{\perp}$. \par 
 		We define a $projection \ part$ of $F$, denoted by $PF$, as $PF=F^{\perp}\cup (B_{F^{\perp}}\cup \{b_{F^{\perp}}\})\cup \{b_{F'}\in B\mid F\subseteq F'  \}$.\par
 		
 		The $augmented\ graph$ of $X$, denoted by $CX$, is the graph whose vertex set consists of $V(X)\cup B$ and two vertices are adjacent whenever \par 
 		1. two vertices $v,w\in V(X)$ that are adjacent in $X$.\par 
 		2. two vertices $v,w\in V(X)$ that are contained in $W$-adjacent maximal cliques. We often call this edge by $additional \ edge$.\par 
 		3. two vertices $v\in V(X), \ w\in B$ such that $w=b_{F}$ and $v\in F$.
 		\par
 		For each domain $F\in\mathfrak{F}$, the $augmented \ graph \ CF $ of the domain $F$ is an induced subgraph of $CX$ spanned by vertex set $(V(F)\cup B_{F})$.
 		
 	\end{definition} 
 	
 	\begin{remark}
 		The augmentation of each domain is slightly different compared to the original CHHS theory in \cite{BHMS20}. The reason is that we expect that the augmented graphs of properly nested domains are coned-off so the nested domains have uniformly bounded relative projections. Hence in \cite{BHMS20}, the nesting of essential domains $F\subsetneq F'$ requires a simplex $\Delta\subset F'$ satisfying $lk(\Delta)\cap F'=F$. In the graph factor system, such clique subgraphs may not exist so we need to take some additional vertices that coning-off every nested subgraphs.  
 	\end{remark}

 	\begin{definition}
 		
 		Let $F\in\mathfrak{F}$ be a domain. A $complement$ $graph$ of $F$, denoted by $Y_{F}$, is an induced subgraph of $CX$ spanned by the vertex set $(V(X)\cup B) -PF$.
 	\end{definition}
 	\par Now we extend the axioms of CHHS to the graph with the graph factor system. \par 
 	\begin{definition}
 		Let $(X,  W, \mathfrak{F})$ be a triple consisting of a simplicial graph $X$, an $X$-graph $W$ and a graph factor system $\mathfrak{F}$ on $X$. We define the $\delta$-$hyperbolicity$ $condition$ on the triple as follows:\par 
 		There exists $\delta>0$ such that for each domain $F\in \mathfrak{F}$, \par 
 		(1) the augmented graph $CF$ is $\delta$-hyperbolic graph and;\par 
 		(2) $CF$ admits $(\delta,\delta)$-quasi-isometric embedding into $Y_{F}$.
 	\end{definition}
 
  The hyperbolicity condition explains how the geometric and projection data of the expected HHS structure is encoded in the graph. Indeed if $\mathfrak{F}$ is a singleton set $\mathfrak{F}=\{ X \}$, then the condition tells that $W=CX$ is hyperbolic and hence admits trivial HHS structure without any additional assumption. But to induce actual HHS structure for general cases, we need more axioms to guarantee that the induced geometric and combinatorial structures on the nested domains are comparable. First we note that each domain admits an induced graph factor system. 
  \begin{lemma}
  	For each domain $F\in\mathfrak{F}$, the induced system $\mathfrak{F}_{F} = \{ F'\in \mathfrak{F} \mid F'\subseteq F \}$ is a graph factor system of the graph $F$.
  \end{lemma}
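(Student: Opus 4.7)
The plan is to verify each of the four axioms in the definition of a graph factor system, showing that they all follow essentially immediately from the fact that $\mathfrak{F}_F$ is the restriction of $\mathfrak{F}$ to subgraphs of $F$.

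For axiom (1), $F \in \mathfrak{F}_F$ is immediate since $F \in \mathfrak{F}$ and $F \subseteq F$. For axiom (3), if $F_1, F_2 \in \mathfrak{F}_F$ have non-empty intersection, then $F_1, F_2 \in \mathfrak{F}$ so $F_1 \cap F_2 \in \mathfrak{F}$ by axiom (3) of $\mathfrak{F}$, and $F_1 \cap F_2 \subseteq F$, hence $F_1 \cap F_2 \in \mathfrak{F}_F$. For axiom (4), any strictly ascending chain inside $\mathfrak{F}_F$ is in particular such a chain inside $\mathfrak{F}$, so its length is bounded by the complexity $n$ of $\mathfrak{F}$.

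The only point that requires a small observation is axiom (2). Given $v \in F$, I would note that since $F$ is an induced subgraph of $X$, we have the identity $lk_F(v) = lk_X(v) \cap F$. Now $lk_X(v) \in \mathfrak{F}$ by axiom (2) for $\mathfrak{F}$, and $F \in \mathfrak{F}$; if the intersection is non-empty, then axiom (3) for $\mathfrak{F}$ gives $lk_X(v) \cap F \in \mathfrak{F}$, and since this subgraph is contained in $F$, it lies in $\mathfrak{F}_F$, as required.

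No step is a serious obstacle; the whole statement is a bookkeeping lemma whose purpose is to make precise that the setting of the theory is closed under passing to subdomains, so that one can recursively apply the extended CHHS axioms inside each $F \in \mathfrak{F}$. The only mild subtlety is the identification $lk_F(v) = lk_X(v) \cap F$, which uses that domains in $\mathfrak{F}$ are induced subgraphs.
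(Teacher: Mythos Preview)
Your proof is correct and follows essentially the same approach as the paper's own argument: verify each of the four axioms directly, with the key observation for axiom (2) being the identity $lk_F(v)=lk_X(v)\cap F$. Your treatment is in fact slightly more careful than the paper's in noting the non-emptiness hypothesis needed to invoke axiom (3) when handling links.
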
 
  \begin{proof}
  First note that $F\in\mathfrak{F}_{F}$ by construction. For each $v\in F$, its link graph in $F$ satisfies $lk_{F}(v)=lk(v)\cap F \in \mathfrak{F}$ so $lk_{F}(v)\in\mathfrak{F}_{F}$. Also if $F_{1},F_{2}\in\mathfrak{F}_{F}\subset\mathfrak{F}$, then $F_{1}\cap F_{2}\in\mathfrak{F}$ and $F_{1}\cap F_{2}\subseteq F$, hence $F_{1}\cap F_{2}\in\mathfrak{F}_{F}$. Finally whenever  $F_{1}\subsetneq F_{2} \dots \subsetneq F_{k}$ is a chain of subgraphs in the system $\mathfrak{F}_{F}$, it is the chain of subgraphs in the system $\mathfrak{F}$ and hence $k$ is bounded by the complexity of $\mathfrak{F}$. 
  \end{proof}
  This observation shows that each domain $F\in\mathfrak{F}$ is itself a graph with the graph factor system $\mathfrak{F}_{F}$ of strictly smaller complexity. Let $W^{F}$ be some $F$-graph. We will denote the projection parts, complements graphs and augmented graphs of nested domains $H\in \mathfrak{F}_{F}$ by $PH_{F}$, $Y^{F}_{H}$ and $C_{F}H$ respectively. \par 
  Now we define an axiom which guarantees that for each domain, considered as the graph with the graph factor system, the induced CHHS structure is compatable with the structure of $(X,W,\mathfrak{F})$. 
  \par 
 
  \begin{definition}
  	Let $(X,  W, \mathfrak{F})$ be a triple of a simplicial graph $X$, an $X$-graph $W$ and a graph factor system $\mathfrak{F}$ on $X$. We say the triple $(X,  W, \mathfrak{F})$ satisfies the $hierarchy \ condition$ if 
  	for each domain $F\in\mathfrak{F}$, there exists a $F$-graph $W^{F}$ such that \par 
  	(1) for each domain $F'\in\mathfrak{F}_{F}$, we have $C_{F}F'=CF'$ and, \par 
  	(2) for each domain $F'\in \mathfrak{F}_{F}$ and for each (possibly empty) maximal clique $x$ in the projection parts $PF'_{F}$ , there is an induced map $x:W^{F'}\rightarrow W^{F}$ that assigns to each maximal clique $w\subset F'$ the maximal clique of $F$ that contains $x\ast w$ as a subclique.
  \end{definition}
  
  \begin{definition}
  	A $combinatorial \ HHS \ with \ factor \ system$(CHHF) is a triple $(X,W,\mathfrak{F})$ consists of a simplicial graph $X$, an $X$-graph $W$ and a graph factor system $\mathfrak{F}$ of $X$ that satisfies the $\delta$-hyperbolicity and hierarchy condition.
  \end{definition}
  
 Our goal is to show that if $(X,W,\mathfrak{F})$ is a CHHF, then $(W,\mathfrak{F})$ admits an $E$-HHS structure with $E$ depends only on the complexity $n$ of the graph factor system $\mathfrak{F}$ and the hyperbolicity constant $\delta$. To do this, first we need to show that $(W,\mathfrak{F})$ admits a proto-HHS structure. The projection maps will be defined in the augmented graph by nearest point projection maps, but to control the size of various images we need the hyperbolicity on the complement graphs and we will check this in the next section.

 \subsection{Proto-HHS structure and bounded geodesic image}
 Let $(X,W,\mathfrak{F})$ be a CHHF. The goal of this subsection is to show that $\mathfrak{F}$ induces a proth-HHS structure on $W$ and check the combinatorial bounded geodesic image for CHHF setting.
\par 
By Lemma 3.6, each non-maximal domain $F\in\mathfrak{F}$ admits a graph factor system $\mathfrak{F}_{F}$. We first check that the triple $(F,W^{F},\mathfrak{F}_{F})$ is a CHHF, where $W^{F}$ is given by the hierarchy condition.
 \begin{lemma}
 	For each domain $F\in\mathfrak{F}$, the triple $(F,W^{F},\mathfrak{F}_{F})$ is CHHF.\par 
 \end{lemma}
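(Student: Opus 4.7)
The plan is to verify in turn the four constituents of a CHHF triple for $(F, W^{F}, \mathfrak{F}_{F})$. The first two---that $\mathfrak{F}_{F}$ is a graph factor system and that $W^{F}$ is an $F$-graph---are delivered immediately by Lemma 3.6 and by the hierarchy condition on the ambient triple $(X,W,\mathfrak{F})$, so the substantive content is to derive the $\delta$-hyperbolicity and hierarchy conditions for the sub-triple from their counterparts at the top level.

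For the $\delta$-hyperbolicity condition, I would first use clause (1) of the ambient hierarchy condition to identify $C_{F}F'$ with $CF'$ for every $F'\in\mathfrak{F}_{F}$; hyperbolicity of the former is then inherited directly from the ambient $\delta$-hyperbolicity. The real work is the quasi-isometric embedding into the complement graph $Y^{F}_{F'}$. My approach is to sandwich the three relevant induced subgraphs of $CX$ as $CF'\subseteq Y^{F}_{F'}\subseteq Y_{F'}$ and then invoke the ambient qi-embedding. The first containment is checked by verifying $(V(F')\cup B_{F'})\cap PF'_{F}=\emptyset$ directly from the definitions of the link and of the projection vertex collection $B_{F'}$. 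The second containment reduces to showing $PF'\cap(V(F)\cup B_{F})\subseteq PF'_{F}$; this is a short case analysis against the four ingredients of $PF'$ ($X$-link vertices of $F'$, their induced projection vertices, the projection vertex of the $X$-link itself, and nesting-witnessing projection vertices), crucially using the identity $lk_{X}(F')\cap F=lk_{F}(F')$. Once the sandwich is in place, the chain $d_{CF'}\geq d_{Y^{F}_{F'}}\geq d_{Y_{F'}}$ combined with the ambient $(\delta,\delta)$-qi-embedding $CF'\hookrightarrow Y_{F'}$ yields a $(\delta,\delta)$-qi-embedding $CF'\hookrightarrow Y^{F}_{F'}$ with the same constants.

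For the hierarchy condition on the sub-triple, I would re-use the $F'$-graphs $W^{F'}$ already supplied by the ambient hierarchy condition for each $F'\in\mathfrak{F}_{F}$. Clause (1) of the sub-level hierarchy then follows by transitivity: the ambient condition yields $C_{F'}F''=CF''=C_{F}F''$ for all $F''\in\mathfrak{F}_{F'}$, so the two augmented graphs of $F''$ measured in the $F'$-context and in the $F$-context agree. Clause (2), the existence of the induced maps $W^{F''}\to W^{F'}$ associated to maximal cliques $x\in PF''_{F'}$, is precisely clause (2) of the ambient hierarchy condition applied at level $F'$, so nothing new is required.

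The principal obstacle---modest but unavoidable---is the ingredient-by-ingredient case analysis for the inclusion $PF'\cap(V(F)\cup B_{F})\subseteq PF'_{F}$ used in the sandwich above, since the projection parts in the ambient and induced settings involve links and nesting witnesses whose scopes differ and must be matched carefully. Once that bookkeeping is complete, every remaining assertion reduces to quoting the appropriate datum at the top level.
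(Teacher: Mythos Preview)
Your proposal is correct and follows essentially the same route as the paper: both arguments identify $C_{F}F'=CF'$ via the ambient hierarchy condition, establish the sandwich $CF'\subseteq Y^{F}_{F'}\subseteq Y_{F'}$ of induced subgraphs (the paper phrases the second inclusion as $Y^{F}_{F'}=Y_{F'}\cap CF$, which amounts to the same case analysis you outline for $PF'\cap(V(F)\cup B_{F})\subseteq PF'_{F}$), and then factor the ambient $(\delta,\delta)$-qi-embedding through the intermediate graph. The hierarchy condition is likewise handled identically, by reusing the $F'$-graphs $W^{F'}$ and the induced maps supplied at the top level.
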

 \begin{proof}
 	By the hierarchy condition, for each domain $F'\in\mathfrak{F}_{F}$, the associated augmented graph $C_{F}F'$ is the same as $CF'$ which is $\delta$-hyperbolic by the hyperbolicity condition on $(X,W,\mathfrak{F})$. \par 
 	To show the second part of hyperbolicity condition for  $(F,W^{F},\mathfrak{F}_{F})$, consider the complement graph $Y^{F}_{F'}$ of $F'$ in $CF$. This graph is an induced subgraph of $CF$ spanned by vertices \begin{center}
 		
  $V(CF)-(\{v\in F \mid F'\subset lk(v) \}\cup \{b_{H}\in CF \mid F'\sqsubseteq H \ or \ H\perp F'  \})$ 	\end{center} which is presicely the vertex set of $Y_{F'}\cap CF$. Since $CF$ is an induced subgraph of $CX$ and $Y^{F}_{F'}$ is an induced subgraph of $CF$, the edges of $Y^{F}_{F'}$ and $Y_{F'}$ are induced ultimately from the graph $CX$, so $Y^{F}_{F'}$ is indeed an induced subgraph of $Y_{F'}$ and the inclusion $i : Y^{F}_{F'}\rightarrow Y_{F'}$ is 1-Lipschitz. Consider the following commutative diagram whose arrows are all inclusion.
 	
 	\[
 	\begin{tikzcd}
 	CF' \arrow[r, hook] \arrow[dr, hook] & Y^{F}_{F'} \arrow[d, hook] \\
 	& Y_{F'}
 	\end{tikzcd}
 	\]
 	We showed that the right arrow is 1-Lipschitz where the diagonal arrow is $(\delta,\delta)$-quasi-isometric embedding by the hyperbolicity condition. Hence the top inclusion is also uniformly quasi-isometric embedding. This shows the hyperbolicity condition for  $(F,W^{F},\mathfrak{F}_{F})$ .\par 
 	For each domain $F'\in\mathfrak{F}_{F}$, $F'\in\mathfrak{F}$ and the hierarchy condition of $(X,W,\mathfrak{F})$ induces some $F'$-graph $W^{F'}$. Associate such $W^{F'}$ to each domains $F'\in\mathfrak{F}_{F}$. For each pair of domains $H,K\in\mathfrak{F}_{F}$ with $H\sqsubsetneq K$ and each maximal clique subgraph $x\subset H^{\perp}\cap K$, taking  the induced map $x:W^{H}\rightarrow W^{K}$ from the hierarchy condition of $(X,W,\mathfrak{F})$.
 	The hierarchy condition of $(F,W^{F},\mathfrak{F}_{F})$ follows directly from the definition.    
 \end{proof}
 
 Lemma 3.9 tells that each domain of CHHF is itself a CHHF with strictly smaller complexity. One may expect various induction arguements on the complexity to understand $(X,W,\mathfrak{F})$, but to do this we need more connection between the geometric, combinatoric data of $X$ and its domains; the projection maps. For this purpose, we first check the hyperbolicity of the complement graphs of domains.

 \begin{lemma}
 	Let $F$ be a domain of the CHHF $(X,W,\mathfrak{F})$. Then $Y_{F}$ is uniformly hyperbolic graph with hyperbolicity constant depends only on $n$ and $\delta$.
 \end{lemma}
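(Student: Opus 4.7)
The plan is to exploit the $(\delta,\delta)$-quasi-isometric embedding $CF \hookrightarrow Y_{F}$ provided by the hyperbolicity condition, and to upgrade it to a quasi-isometry by showing that $CF$ is coarsely dense in $Y_{F}$ with a uniform bound depending only on $\delta$ and $n$. Once this is done, $Y_{F}$ is uniformly quasi-isometric to $CF$, and therefore inherits the $\delta$-hyperbolicity of $CF$ with constants depending only on $\delta$ and $n$. The argument will proceed by induction on the complexity $n$: for $F = X$ we have $Y_{X} = CX = CF$, which is $\delta$-hyperbolic directly from the hyperbolicity condition, providing the base case; for a proper domain $F \subsetneq X$, Lemma 3.9 gives that $(F, W^{F}, \mathfrak{F}_{F})$ is itself a CHHF of strictly smaller complexity, to which the inductive hypothesis applies.

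The central technical step is to classify the vertices of $V(Y_{F}) - V(CF)$ and bound their $Y_{F}$-distance to $CF$. These split into (i) ambient vertices $v \in V(X) - V(F) - F^{\perp}$, and (ii) projection vertices $b_{F'}$ where $F'$ satisfies $F' \not\sqsubseteq F^{\perp}$, $F' \neq F^{\perp}$, $F \not\subseteq F'$ and $F' \not\subsetneq F$. In the favorable subcase of (i), the intersection $lk(v) \cap F$ is non-empty, and then it lies in $\mathfrak{F}$ by the intersection-closure axiom of a graph factor system, and any vertex in $lk(v) \cap F$ is adjacent to $v$ in $X$, giving an edge in $CX$ that persists in $Y_{F}$, hence $d_{Y_{F}}(v, CF) \leq 1$. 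The favorable subcase of (ii) is similar: if $F' \cap F$ is non-empty, then $F' \cap F \in \mathfrak{F}$ and any vertex $u \in F' \cap F \subseteq V(F) \subseteq V(CF)$ is adjacent to $b_{F'}$ by definition of $CX$, so $d_{Y_{F}}(b_{F'}, CF) \leq 1$.

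The unfavorable cases ($lk(v) \cap F = \emptyset$ for (i), or $F' \cap F = \emptyset$ for (ii)) are where the induction is needed. In these cases I would chain through intermediate domains $F'' \in \mathfrak{F}$ that bridge between the offending vertex and $F$, using additional edges coming from $W$-adjacency of maximal cliques together with projection vertices for intermediate domains. Each step of the chain either moves strictly deeper into the factor system or strictly outward, so the chain length is controlled by the complexity $n$ of $\mathfrak{F}$; crucially, one verifies that none of the intermediate vertices lies in $PF$, so the chain stays inside $Y_{F}$.

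The main obstacle is this last step: ensuring that the connecting chains can be constructed without ever entering $PF = F^{\perp} \cup (B_{F^{\perp}} \cup \{b_{F^{\perp}}\}) \cup \{b_{F'} \mid F \subseteq F'\}$, and that their length is bounded uniformly in $n$ and $\delta$. The hierarchy condition is used here to transfer structure between $W$ and $W^{F}$, ensuring the induced CHHF structure on $F$ agrees with the ambient one via $C_{F}F' = CF'$; combined with the factor system closure properties, this yields intersections at each step that lie in $\mathfrak{F}$, so the complexity bound $n$ provides the length bound. Once coarse density of $CF$ in $Y_{F}$ is achieved with constants depending only on $\delta$ and $n$, a standard quasi-isometry argument completes the proof.
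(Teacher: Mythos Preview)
Your central claim---that $CF$ is coarsely dense in $Y_{F}$ with constants depending only on $n$ and $\delta$---is false, and this makes the whole strategy collapse. Recall that $Y_{F}$ is the induced subgraph of $CX$ on $(V(X)\cup B)\setminus PF$, so it contains \emph{every} vertex of $X$ outside $F^{\perp}$ and every projection vertex $b_{F'}$ with $F'\not\perp F$ and $F\not\subseteq F'$. In particular $Y_{F}$ contains essentially all of the ambient space $CX$, not just a neighbourhood of $CF$; there is no reason a vertex $v\in V(X)\setminus(F\cup F^{\perp})$ should be uniformly close to $CF$ inside $Y_{F}$, and your ``unfavorable cases'' chase cannot produce such a bound because none exists. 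Indeed, if $CF$ were coarsely dense in $Y_{F}$ then the nearest-point projection $p_{F}:Y_{F}\to CF$ used throughout the paper would be coarsely the identity, and the relative projections $\rho^{F}_{F'}$ (defined via $p_{F'}$) would carry no information---the entire proto-hierarchy structure would degenerate.

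The paper's argument goes in the opposite direction: rather than comparing $Y_{F}$ to the \emph{small} space $CF$, it compares $Y_{F}$ to the \emph{large} ambient space $CX$. One introduces the interpolating graphs $Y^{k}_{F}$ obtained from $Y_{F}$ by restoring the projection vertices $b_{F'}$ with $F\subseteq F'$ and $cl(F')>k$. The base $Y^{0}_{F}$ is shown to be uniformly quasi-isometric to $CX$ (so hyperbolic), and then one removes the extra cone points $b_{F'}$ one co-level at a time, using a general lemma (Proposition~3.14, from \cite{BHMS20}) that deleting a discrete set of vertices with hyperbolic, quasi-isometrically embedded links preserves hyperbolicity. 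The hyperbolicity condition on $CF'\hookrightarrow Y_{F'}$ is exactly what feeds this deletion lemma at each stage. Note that this procedure can (and does) \emph{enlarge} the space at each step---removing a cone point un-cones its link---so the end result $Y_{F}$ is typically much larger than $CF$, consistent with the quasi-isometric \emph{embedding} (not quasi-isometry) in the hyperbolicity condition.
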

 
 Proof of this lemma is quite similar to that in \cite{BHMS20}. The difference is that, in the CHHS setting, each projection vertex is realized as a sub-clique of the given domains, while in the CHHF setting we blow-it up for each domains so we need slight modification on the leveled complement graphs.
 
 \begin{definition}
 	A $co$-$level$ $cl(F)$ of domain $F\in\mathfrak{F}$ is defined as follows;
 	\par 
 	Define $cl(X)=0$ and declare that a domain $F$ has co-level $n+1$ if it does not have co-level $\leq n$ but is properly contained in a domain with co-level $n$.  
 \end{definition}
 
 \begin{definition}
 	Let $F$ be a domain with co-level $n$. For each $0\leq k \leq n$, define $Y^{k}_{F}$ to be the induced subgraph of $CX$ spanned by the vertex set $V(CX)-(F^{\perp} \cup (B_{F^{\perp}}\cup \{b_{F^{\perp}}\}) \cup \{ b_{F'} \mid F\subset F' \ and \ cl(F') \leq k \} )$. 
 \end{definition}
 Note that $Y^{k}_{F}$ is obtained from $Y_{F}$ by adding projection vertices $\{b_{F'}\mid F\subseteq F', \ cl(F')>k \}$. Since $F\subseteq F'$ implies $cl(F')\leq cl(F)$, we have $Y^{cl(F)}_{F}=Y_{F}$. 
 We will prove the hyperbolicity of $Y_{F}$ by the following way: first we prove the hyperbolicity of $Y^{0}_{F}$ and inductively remove those additional projection vertices of co-level $k+1$ from $Y^{k}_{F}$ while hyperbolicity preserved, so that for each $0\leq k \leq cl(F)$, $Y^{k}_{F}$ is uniformly hyperbolic and in particular $Y_{F}$ is hyperbolic. 
 \begin{lemma}
 	$Y^{0}_{F}$ is uniformly quasi-isometric to $CX$. In particular, $Y^{0}_{F}$ is hyperbolic.
 \end{lemma}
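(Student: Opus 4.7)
The plan is to exhibit that the natural inclusion $\iota : Y^{0}_{F} \hookrightarrow CX$ is a quasi-isometry with constants depending only on $\delta$ and $n$; hyperbolicity of $Y^{0}_{F}$ then follows from that of $CX$, which is the $\delta$-hyperbolicity condition applied to the maximal domain. Write $R := F^{\perp} \cup B_{F^{\perp}} \cup \{b_{F^{\perp}}\}$ for the set of vertices removed from $CX$ to form $Y^{0}_{F}$. Since every vertex of $F^{\perp}$ is $CX$-adjacent to $b_{F^{\perp}}$ by a type-$3$ edge, and every $b_{F'} \in B_{F^{\perp}}$ is $CX$-adjacent to some vertex of $F' \subseteq F^{\perp}$, the set $R$ has $CX$-diameter at most $4$.

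On the $Y^{0}_{F}$ side, a non-empty induced subgraph cannot be contained in its own link, so $F \cap F^{\perp} = \emptyset$ and hence $F \subseteq Y^{0}_{F}$; for the same reason the projection vertex $b_{F}$ lies in $Y^{0}_{F}$, and because $b_{F}$ is $CX$-adjacent to every vertex of $F$, the set $F \cup \{b_{F}\}$ has $Y^{0}_{F}$-diameter at most $2$. Combined with the fact that every vertex of $F^{\perp}$ is $X$-adjacent to every vertex of $F$, this makes $\iota$ both $1$-Lipschitz and $2$-coarsely surjective. The substantive step is then a bound $d_{Y^{0}_{F}}(x,y) \leq K\,d_{CX}(x,y) + K$ for $x, y \in Y^{0}_{F}$. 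Given a $CX$-geodesic $\gamma$ from $x$ to $y$, decompose it into maximal subsegments alternately contained in $R$ and in $Y^{0}_{F}$; geodesicity together with $\operatorname{diam}_{CX}(R) \leq 4$ forces each $R$-segment to have length at most $4$, so it suffices to replace each such segment $v_{i},\dots,v_{j}$ (with $v_{i-1}, v_{j+1} \in Y^{0}_{F}$) by a $Y^{0}_{F}$-path between $v_{i-1}$ and $v_{j+1}$ of uniformly bounded length.

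For such a segment, both $v_{i}$ and $v_{j}$ must in fact lie in $F^{\perp}$, since any projection vertex in $R$ has all of its $CX$-neighbours inside $F^{\perp} \subseteq R$, contradicting adjacency to the $Y^{0}_{F}$-vertex $v_{i-1}$ or $v_{j+1}$. One then reroutes through $F \cup \{b_{F}\} \subseteq Y^{0}_{F}$ by a case analysis on the $CX$-edge $v_{i-1} v_{i}$: if it is an edge of $X$ or an additional edge coming from $W$-adjacent maximal cliques, one uses the maximal clique containing $v_{i}$ together with $v_{i} \in lk(F)$ to reach a vertex of $F$ within a bounded number of steps; if it is a type-$3$ edge $b_{F''} - v_{i}$ with $v_{i} \in F''$, one invokes the factor system axioms that $F'' \cap F^{\perp}$ and $F'' \cap F$ lie in $\mathfrak{F}$ (when non-empty) and pivots through $b_{F''}$ and $b_{F}$ into $F$.

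The main obstacle is the uniform bound on the detour in each of these three edge types: this is the analogue, adapted to our graph factor system, of the corresponding step in \cite{BHMS20}. The complication compared to that setting is that here projection vertices are new vertices rather than subsimplices of $X$, so where \cite{BHMS20} could pivot through an honest simplex containing $b_{F^{\perp}}$, we must instead pivot through the blown-up projection vertex $b_{F}$ and exploit the containment relations among the factors intersecting $F^{\perp}$ non-trivially. Once this local bound is in hand, the number of $R$-segments along $\gamma$ is bounded linearly in $d_{CX}(x,y)$, and summing the detours yields the required linear bound and hence the quasi-isometry.
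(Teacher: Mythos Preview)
Your overall strategy is the same as the paper's: both arguments amount to showing that the inclusion $Y^{0}_{F}\hookrightarrow CX$ has a coarse Lipschitz inverse, and both reduce this to the local statement that any vertex of $Y^{0}_{F}$ which is $CX$-adjacent to a vertex of $R=F^{\perp}\cup\{b_{H}\mid H\perp F\}$ lies at uniformly bounded $Y^{0}_{F}$-distance from $b_{F}$. Your Cases 1 and 2 (an $X$-edge or an additional $W$-edge) are handled exactly as in the paper.

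The gap is in your third case. If $v_{i-1}=b_{F''}$ with $v_{i}\in F''\cap F^{\perp}$, you propose to pivot through $b_{F''}$ and $b_{F}$ via a vertex of $F''\cap F$. But nothing forces $F''\cap F\neq\emptyset$: the hypothesis $b_{F''}\in Y^{0}_{F}$ only gives $F''\not\subseteq F^{\perp}$, which is much weaker. When $F''\cap F=\emptyset$ there is no type-3 edge available to connect $b_{F''}$ (or anything in $F''$) directly to $b_{F}$, and invoking that $F''\cap F^{\perp}\in\mathfrak{F}$ does not help. The paper's fix is different and genuinely uses the CHHF axioms: since $CF''$ is $\delta$-hyperbolic, it is connected, and since $CF''$ meets both $Y^{0}_{F}$ and $F^{\perp}$ one can choose adjacent vertices $v'\in CF''\cap Y^{0}_{F}$ and $w'\in CF''\cap F^{\perp}$. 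If $v'$ happens to be another projection vertex $b_{H'}$ with $H'\sqsubsetneq F''$, one repeats the argument inside $CH'$; finite complexity of $\mathfrak{F}$ terminates this descent with $v'\in X$, at which point Cases 1--2 apply to the edge $v'w'$ and give a path of length at most $4$ from $v'$ to $b_{F}$; concatenating with $b_{F''}-v'$ gives the uniform bound. Your sketch should incorporate this connectivity-plus-recursion step.
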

\begin{proof}
    We show this by finding Lipschitz maps between $Y^{0}_{F}$ and $CX$. Since $Y^{0}_{F}$ is an induced subgraph of $CX$, the inclusion $i : Y^{0}_{F} \rightarrow CX$ is 1-Lipschitz. On the other hand, we can construct the quasi-inverse of the inclusion map $\phi : CX \rightarrow Y^{0}_{F} $ which is identity on $Y^{0}_{F}\subset CX$ and sends each element of $F^{\perp}\cup \{ b_{H} \mid H\perp F \}$ to a fixed element $b_{F}$. We can show that $\phi$ is a Lipschitz map by showing that for each edge $e\in CX$, the image of $e$ under $\phi$ has a uniformly bounded diameter in $Y^{0}_{F}$. If $e\subset Y^{0}_{F}$ or $e\subset CX\setminus Y^{0}_{F}=F^\perp \cup \{ b_{H} \mid H\perp F\}$, then $\phi(e)$ is either an edge or a vertex, hence we only need to consider the case the endpoints of $e$ are contained in the different graphs $Y^{0}_{F}$ and $CX\setminus Y^{0}_{F}$. If one end point is $b_{H}$ for some $H$ orthogonal to $F$, then by our construction of adjacency with projection vertices, the other end point must be a vertex of $H$. Since $H\subset F^{\perp}$, both end points of $e$ are contained in $CX\setminus Y^{0}_{F}$ and we are done. Hence the end point of $e$ in $CX\setminus Y^{0}_{F}$ lies in $F^{\perp}$. Denote the end points of $e$ by $v\in Y^{0}_{F}$ and $w\in F^{\perp}$. If $v\in X$, then we consider the few possible cases as follows.\par 
	Case 1. $v$ and $w$ are adjacent in $X$. In this case $lk(w)$ contains $v$ and $F\sqsubseteq lk(w)$, so we can join $\phi(v)=v$ and $\phi(w)=b_{F}$ by the edge path with end points $v-b_{lk(w)}-v'-b_{F}$ where $v'$ is some vertex in $F$. \par 
	Case 2. If $v$ and $w$ are adjacent by an additional edge, then there exists maximal cliques $x,y\subset X$ such that $v\in x, w\in y$ and $x,y$ are adjacent in $W$. In this case we can take a vertex $w'$ in $y\cap Y^{0}_{F}$ which is adjacent to $v$ by additional edge and $w'\in lk(w)$, so we have an edge path  $v-w'-b_{lk(w)}-v'-b_{F}$ where $v'\in F$.\par 
	The remaining case is when $v=b_{H}$. In this case $w\in H$ and hence $H\cap F^{\perp}\neq \emptyset$ while $b_{H}\in Y^{0}_{F}$ shows that $H\cap (X\setminus F^{\perp}) \neq \emptyset$, so $CH$ intersects both $F^{\perp}$ and $Y^{0}_{F}$ nontrivially. Since $CH$ is $\delta$-hyperbolic and hence connected, we can choose $v'\in CH\cap Y^{0}_{F}, w'\in CH \cap F^{\perp}$ that are adjacent. If $v'=b_{H'}$, then $v'=b_{H'}\in CH$ implies $H'\sqsubsetneq H$ and we can run the procedure again to choose some adjacent pair $v''\in CH'\cap Y^{0}_{F}\subset CH\cap Y^{0}_{F}$, $w'\in CH'\cap F^{\perp}\subset CH\cap F^{\perp}$. This process must be terminated by finite conplexity of $\mathfrak{F}$, so we can assume that $v'\in H\cap Y^{0}_{F}$ and $w'\in H\cap F^{\perp}$ which is the case we already done, so $v'$ and $b_{F}$ can be joined by edge path of length$\leq 4$. Since $v,v'\in Y^{0}_{F}$, $w,w'\in F^{\perp}$ and $v,v'$ are adjacent, we can join $\phi(v)=v$ and $\phi(w)=b_{F}$ by an edge path of length at most 5 so $\phi$ is lipschitz map as desired.
\end{proof}

 To remove the additional projection vertices while hyperbolicity is preserved, we need the following proposition [\cite{BHMS20}, Lemma 3.1].
 \begin{proposition}
  Let $Z$ be a $\delta$-hyperbolic graph and $\mathcal{V}\subset Z$ be a discrete collection of vertices. For each $v\in \mathcal{V}$, let $Z_{v}$ be an induced subgraph of $Z$ spanned by vertex set $V(Z)-{v}$. \par 
 	Suppose that for each $v\in \mathcal{V}$, \begin{itemize}
 		\item $lk(v)$ is $\delta$-hyperbolic graph.
 		\item $lk(v)$ admits $(\delta,\delta)-$quasi-isometric embedding into $Z_{v}$.
 	\end{itemize}
 	Then, there exists $\delta'$ depending only on $\delta$, so that the induced subgraph $Z_{V}$ of $Z$ spanned by vertex set $V(Z)-\mathcal{V}$ is $\delta'$-hyperbolic. \par 
 	Moreover, if $Q\subseteq Z$ is a $(\delta,\delta)$-quasi-isometrically embedded induced subgraph such that $Q$ contains the star of $v$ whenever $v\in \mathcal{V}\cap Q$, then the induced subgraph $Q_{\mathcal{V}}$ of $Z$ spanned by $V(Q)-(\mathcal{V}\cap Q)$ admits $(\delta',\delta')$-quasi-isometric embedding into $Z_{\mathcal{V}}$.
 \end{proposition}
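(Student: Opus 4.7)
The plan is to show hyperbolicity of $Z_{\mathcal{V}}$ by viewing $Z$ as the graph obtained from $Z_{\mathcal{V}}$ by coning off each $lk(v)$ via the new vertex $v$, and applying Bowditch's ``guessing geodesics'' criterion (equivalently, a Kapovich--Rafi type combination theorem for hyperbolicity preservation). Because $\mathcal{V}$ is discrete, no vertex of $\mathcal{V}$ is adjacent to another, so $lk(v)\cap\mathcal{V}=\emptyset$ and $lk(v)\subseteq Z_{\mathcal{V}}$; the $(\delta,\delta)$-QI-embedding $lk(v)\hookrightarrow Z_{v}$ then descends to a uniform QI-embedding $lk(v)\hookrightarrow Z_{\mathcal{V}}$, using that the inclusion $Z_{\mathcal{V}}\hookrightarrow Z_{v}$ is $1$-Lipschitz with $d_{Z_{\mathcal{V}}}\geq d_{Z_{v}}$, while $d_{lk(v)}\geq d_{Z_{\mathcal{V}}}$ on $lk(v)$.

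For each pair $a,b\in Z_{\mathcal{V}}$, I would take a $Z$-geodesic $\gamma$ from $a$ to $b$ and perform surgery: every length-two sub-path $[a',v,b']$ with $v\in\mathcal{V}$ is replaced by a $lk(v)$-geodesic between $a'$ and $b'$, producing a path $\eta_{ab}\subseteq Z_{\mathcal{V}}$. Verification of Bowditch's criterion proceeds by lifting any triangle $(\eta_{ab},\eta_{bc},\eta_{ca})$ back to a $\delta$-thin triangle of $Z$-geodesics and transferring thinness to $Z_{\mathcal{V}}$: for each point on $\eta_{ab}$, its preimage on $\gamma$ has a $\delta$-close companion on another side, whose surgery image lies at bounded $Z_{\mathcal{V}}$-distance via the QI-embedding of the relevant $lk(v)$. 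Uniform slimness of the $\eta$-triangles, with constants depending only on $\delta$, then yields $\delta'$-hyperbolicity of $Z_{\mathcal{V}}$.

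The main obstacle is controlling individual surgery lengths so that $\eta_{ab}$ behaves like a uniform quasi-geodesic of $Z_{\mathcal{V}}$: a single substitution at $v$ adds at most $\delta\, d_{Z_{v}}(a',b')+\delta$ to the length, and $d_{Z_{v}}(a',b')$ is only a priori finite, since $Z$-geodesicity of $\gamma$ forces $d_{Z_{v}}(a',b')\geq 2$ but gives no upper bound. The resolution must amortize along $\gamma$, exploiting $Z$-hyperbolicity together with the $(\delta,\delta)$-QI-embedding hypothesis in an essential way to rule out the accumulation of pathologically long detours; equivalently one can cite a combination theorem for hyperbolicity--preservation under un-coning of QI-embedded hyperbolic subgraphs. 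This amortization is the technical heart of the argument and is exactly what distinguishes the statement from a naive ``subspace of hyperbolic is hyperbolic''.

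For the moreover clause, the same construction runs inside $Q$: the star-containment hypothesis guarantees $lk(v)\subseteq Q$ whenever $v\in\mathcal{V}\cap Q$, so surgeries on $Z$-geodesics with endpoints in $Q_{\mathcal{V}}$ remain inside $Q_{\mathcal{V}}$, and the $(\delta,\delta)$-QI-embedding $Q\hookrightarrow Z$ descends through the surgery to a $(\delta',\delta')$-QI-embedding $Q_{\mathcal{V}}\hookrightarrow Z_{\mathcal{V}}$ with constants depending only on $\delta$.
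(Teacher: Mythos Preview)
The paper does not prove this proposition; it is quoted as \cite{BHMS20}, Lemma 3.1, and used as a black box. So there is no in-paper argument to compare your proposal against, and your sketch is in fact the standard de-electrification strategy that \cite{BHMS20} itself uses.

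That said, as a standalone proof your proposal has a gap at exactly the point you flag. You correctly isolate the crux---a single surgery at $v\in\mathcal V$ replaces a length-$2$ subpath by an $lk(v)$-geodesic of length roughly $d_{Z_v}(a',b')$, which is not a priori bounded---but you do not carry out the amortization. Instead you write that one may ``cite a combination theorem for hyperbolicity-preservation under un-coning of QI-embedded hyperbolic subgraphs''; but that combination theorem \emph{is} the proposition you are trying to prove, so invoking it is circular unless you name a specific external reference (e.g.\ the Kapovich--Rafi or Dahmani--Guirardel--Osin results you allude to) and check that its hypotheses match. The honest content of the argument is the following local-to-global step: if a $Z$-geodesic $\gamma$ from $a$ to $b$ passes through $v$ with neighbours $a',b'$, then hyperbolicity of $Z$ together with the QI-embedding of $lk(v)$ into $Z_v$ forces any $Z_{\mathcal V}$-geodesic from $a$ to $b$ to enter a uniform neighbourhood of $lk(v)$, and the nearest-point projection to $lk(v)$ (which is coarsely Lipschitz since $lk(v)$ is quasiconvex in $Z_{\mathcal V}$) lands near $a'$ and $b'$. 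This is what bounds the surgered path against the $Z_{\mathcal V}$-distance and makes the guessing-geodesics criterion go through. Without this step, or a precise citation replacing it, the argument is incomplete.
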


To apply the proposition to $Y^{k}_{F}$, we need the hyperbolicity on the link graph of vertices that will be removed. The hyperbolicity of link graph of each $b_{F'}$ in $Y^{k}_{F}$ is expected from its similar shape with $CF'$, but in
the link graph of $b_{F'}$ no other projection vertices survive and hence is not quasi-isometric to $CF'$, so we need a slight modification of $Y^{k}_{F}$ in its quasi-isometry class. \par 
Let $Z^{k}_{F}$ be the graph obtained from $Y^{k}_{F}$ by adding some additional edges between the projection vertices, that is $Z^{k}_{F}$ is $Y^{k}_{F}$ together with the additional egdes joining every pair of projection vertices $b_{H},b_{H'}\in Y^{k}_{F}$ such that $H\sqsubsetneq H'$. Since each of such pair are joined by length 2 edge path in $Y^{k}_{F}$ via some vertex of $H$, $Z^{k}_{F}$ is uniformly quasi-isometric to $Y^{k}_{F}$. Also if $F\subseteq F'$ and $cl(F')=k+1$, then $F'\subsetneq H$ implies $cl(H)\leq k$ which implies that the link graph of $b_{F'}$ in $Z^{k}_{F}$ is $(F'\cap Y^{k}_{F}) \cup \{ b_{H} \mid H\sqsubsetneq F' \ and \  H\notperp F \}$. We show that such link graphs are quasi-isometric to $CF'$ and hence are uniformly hyperbolic in the following lemma. 
 \begin{lemma}
 	Let $0\leq k \leq cl(F)$. If $F\subseteq F'$ and $cl(F')= k+1$, then the link graph of $b_{F'}$ in $Z^{k}_{F}$ is hyperbolic.
 \end{lemma}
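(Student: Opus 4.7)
The plan is to show that the link graph $L = lk_{Z^{k}_{F}}(b_{F'})$ is quasi-isometric to the augmented graph $CF'$; since $CF'$ is $\delta$-hyperbolic by the hyperbolicity condition on $(X,W,\mathfrak{F})$, hyperbolicity of $L$ follows with a constant depending only on $\delta$ and $n$.

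First I would identify $L$ inside $CF'$ as an induced subgraph with added shortcut edges. Writing $R = (F^{\perp} \cap F') \cup \{b_{H} : H \sqsubsetneq F', \ H \perp F\}$, one checks directly that $V(L) = V(CF') \setminus R$, and that the edges of $L$ consist of the $CX$-edges inherited from $CF'$ together with the nested-projection edges $b_{H}$-$b_{H'}$ between the projection vertices of $L$. I would then define $C^{+}F'$ by adding to $CF'$ precisely these nested-projection edges among pairs of $L$-projection vertices; each such edge is a length-$2$ shortcut in $CF'$ through any vertex of the smaller domain (non-empty because the involved domains are not orthogonal to $F$), so $C^{+}F'$ is uniformly quasi-isometric to $CF'$ and hence hyperbolic. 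By construction $L$ is the induced subgraph of $C^{+}F'$ on $V(C^{+}F') \setminus R$, and every $C^{+}F'$-edge between $L$ and $R$ is a $CX$-edge.

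Next I would fix $v_{F} \in F$ (which lies in $L$, since $F \cap F^{\perp} = \emptyset$) and observe that every $r \in R$ satisfies $d_{C^{+}F'}(r, v_{F}) \leq 2$: any vertex in $F^{\perp} \cap F'$ is $X$-adjacent to $v_{F}$, and any $b_{H}$ with $H \perp F$ is adjacent to some $v \in H \subseteq F^{\perp}$, which is $X$-adjacent to $v_{F}$. Define a retraction $\phi : C^{+}F' \to L$ by the identity on $L$ and $\phi(r) = v_{F}$ for $r \in R$. The inclusion $L \hookrightarrow C^{+}F'$ is $1$-Lipschitz, and the distance bound above gives $d_{C^{+}F'}(r,\phi(r)) \leq 2$, so to conclude that $L$ is quasi-isometric to $C^{+}F'$ it remains to verify that $\phi$ is coarsely Lipschitz. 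This reduces to a uniform bound on $d_{L}(u, v_{F})$ for each edge $u$-$r$ in $C^{+}F'$ with $u \in L$ and $r \in R$. Such edges are $CX$-edges and split into the cases where $u \in V(F') \setminus F^{\perp}$ with $r \in F^{\perp} \cap F'$ (an $X$- or $W$-additional edge), or $u = b_{H'}$ with $r \in H' \cap F^{\perp} \cap F'$ (a projection-vertex edge, which forces $H' \not\perp F$ and $r \in H'$).

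The crucial tool for the routing is the closure of $\mathfrak{F}$ under intersections. For the first case, $G_{r} := lk(r) \cap F' \in \mathfrak{F}$ contains $F \cup \{u\}$, is strictly contained in $F'$ (otherwise $F' \subseteq lk(r)$ would force $r \in lk(r)$, impossible), and is not orthogonal to $F$ (witnessed by $u \in G_{r} \setminus F^{\perp}$), so $b_{G_{r}} \in L$ yields a length-$2$ detour $u$-$b_{G_{r}}$-$v_{F}$ in $L$. The main obstacle I expect is the projection-vertex case: there the routing must use nested edges in $L$, the intersection domain $H' \cap lk(r)$, and, when available, the projection vertex $b_{F}$ (which belongs to $L$ whenever $F \subsetneq F'$), combined so as to produce a bounded-length $L$-path to $v_{F}$ uniformly in $H'$. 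Handling the configurations where $F$ and $H'$ are neither nested nor share a common vertex, and the degenerate case $F = F'$ (in which $R$ simplifies but $b_{F}$ is not available), requires a careful chain of intersection domains; the bound $n$ on the complexity of $\mathfrak{F}$ controls the length of this chain and hence the resulting quasi-isometry constants.
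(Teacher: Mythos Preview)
Your overall strategy---show that $L=lk_{Z^{k}_{F}}(b_{F'})$ is quasi-isometric to $CF'$---is exactly what the paper does, but the paper's execution is considerably shorter. Rather than build a retraction $C^{+}F'\to L$ by hand, the paper first strips the nested-projection shortcut edges from $L$ to obtain the induced subgraph $\Gamma$ of $Y^{k}_{F}$ on $V(L)$ (noting, as you do, that each shortcut is a length-$2$ detour in $\Gamma$, so $L$ and $\Gamma$ are uniformly quasi-isometric). The key observation you are missing is that $\Gamma$ is \emph{exactly} the graph $Y^{0}_{F}$ computed inside the induced CHHF triple $(F',W^{F'},\mathfrak{F}_{F'})$: its vertex set is $CF'-((F^{\perp}\cap F')\cup\{b_{H}\in CF':H\perp F\})$. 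Lemma~3.14 (applied to this sub-CHHF) then gives $\Gamma\simeq CF'$ in one stroke. Your Lipschitz-retraction argument is essentially a re-proof of Lemma~3.14 in this particular instance; the ``careful chain of intersection domains'' you anticipate for the projection-vertex case is precisely the complexity-bounded recursion carried out in the proof of Lemma~3.14 (passing from $b_{H'}$ to an adjacent pair inside $CH'$, and repeating). So your approach works, but recognising the recursive structure saves all the casework.

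One small gap to flag in your sketch: in your first case, when the edge $u$--$r$ is a $W$-additional edge rather than an $X$-edge, $u$ need not lie in $lk(r)$, so your domain $G_{r}=lk(r)\cap F'$ need not contain $u$ and the path $u$--$b_{G_{r}}$--$v_{F}$ fails. This is the analogue of Case~2 in the proof of Lemma~3.14 and is fixed the same way: pass first to a vertex of $L$ lying in the $W$-adjacent maximal clique containing $r$, which is $X$-adjacent to $r$.
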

\begin{proof}
	First note that $Y^{k}_{F}$ and $Z^{k}_{F}$ have the same vertex set. Consider the link graph of $b_{F'}$ in $Z^{k}_{F}$, denoted by $lk_{Z^{k}_{F}}(b_{F'})$. Let $V$ be a vertex set of the graph $lk_{Z^{k}_{F}}(b_{F'})$ and consider the induced subgraph of $Y^{k}_{F}$ with the vertex set $V$, which we call $\Gamma$. Since the edges appeared in $lk_{Z^{k}_{F}}(b_{F'})$ but not in $\Gamma$ are precisely the pair of nesting comparable projection vertices contained in $\{ b_{H} \mid H\sqsubsetneq F' \ and \  H\notperp F \}$, the end points of each additional edges are uniformly close in $\Gamma$ and hence $lk_{Z^{k}_{F}}(b_{F'})$ and $\Gamma$ are uniformly quasi-isometric. (more precisely, if two projection vertices $b_{H},b_{K}$ are adjacent only in $lk_{Z^{k}_{F}}(b_{F'})$, then $H\subsetneq K$ or $K\subsetneq H$ and $H,K\notperp F$, hence $(H\cap K) \nsubset F^{\perp}$ which implies $(H\cap K) \cap Y^{k}_{F}\neq \emptyset$. Also we have  $(H\cap K)\subsetneq F'$ which implies $(H\cap K)\cap (F'\cap Y^{K}_{F})\neq\emptyset$. Since both $b_{H}$ and $b_{K}$ are adjacent to every vertices of $H\cap K$, the distance between $b_{H}$ and $b_{K}$ is at most 2 in $\Gamma$ via the edge path that passes through some point in $(H\cap K)\cap (F'\cap Y^{K}_{F})\neq\emptyset$.)
  
    We claim that $\Gamma$ is the leveled complement graph $Y^{0}_{F}$ considered in the induced CHHF triple $(F',W^{F'},\mathfrak{F}_{F'})$. As we already verified, the vertex set $V$ of $\Gamma$ consists of $(F'\cap Y^{k}_{F}) \cup \{ b_{H} \mid H\sqsubsetneq F' \ and \  H\notperp F \}$. We can rewrite $V$ by
     \begin{center}
    	$(F'-\{v\in F' \vert F\subseteq lk(v)\})\cup (B_{F'}-\{b_{H}\vert H\perp F\}) $
    \end{center}
    \begin{center}
    	 $  =CF'-((F^{\perp}\cap F')\cup \{b_{H}\in CF' \mid H\perp F \})$
    \end{center} 
    which is the desired vertex set of $Y^{0}_{F}$ considered in $(F',W^{F'},\mathfrak{F}_{F'})$. By Lemma 3.14, $\Gamma$ is quasi-isometric to $CF'$. Since $CF'$ is hyperbolic graph by the hyperbolicity condition of the CHHF triple $(F',W^{F'},\mathfrak{F}_{F'})$, $\Gamma$ is hyperbolic. $lk_{Z^{k}_{F}}(b_{F'})\subset Z^{k}_{F}$ is uniformly quasi-isometric to $\Gamma$ and hence is hyperbolic as desired.
\end{proof}

Since each $Z^{k}_{F}$ is quasi-isometric to $Y^{k}_{F}$, we know that $Z^{0}_{F}$ is hyperbolic. Also the colevel condition guarantees that the projection vertices with the same co-level on domains are pairwisely non-adjacent. \par  To use the proposition 3.15 inductively on $Z^{k}_{F}$, the only remaining part is showing that the link graphs $lk_{Z^{k}_{F}}(b_{F'})\subset Z^{k}_{F}$ admits uniform quasi-isometric embedding into $Z^{k}_{F}-\{b_{F'}\}$ for each co-level $k+1$ domain $F'$. \par

\begin{lemma}
	Let $F\in\mathfrak{F}-\{ X\}$ be a non-maximal domain and $0\leq k \leq cl(F)-1$. If $F'\in\mathfrak{F}$ is a domain with co-level $k+1$ and $F\sqsubseteq F'$, then the inclusion map from the link graph of the corresponding projection vertex $lk_{Z^{k}_{F}}(b_{F'})$ into $Z^{k}_{F}-\{b_{F'}\}$ is a quasi-isometric embedding.
\end{lemma}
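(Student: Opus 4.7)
The plan is to factor the claimed quasi-isometric embedding through the ambient complement graph $Y_{F'}$, where the hyperbolicity condition already supplies a quasi-isometric embedding of $CF'$. As in the proof of the previous lemma, $lk_{Z^{k}_{F}}(b_{F'})$ is uniformly quasi-isometric to the induced subgraph $\Gamma\subseteq Y^{k}_{F}$ on the vertex set $(F'\cap Y^{k}_{F})\cup\{b_{H}\mid H\sqsubsetneq F',\ H\notperp F\}$, and $\Gamma$ coincides with the leveled complement graph $Y^{0}_{F}$ taken inside the CHHF $(F',W^{F'},\mathfrak{F}_{F'})$. A symmetric length-$2$ replacement argument shows $Z^{k}_{F}-\{b_{F'}\}$ is uniformly quasi-isometric to $Y^{k}_{F}-\{b_{F'}\}$, since any $Z$-edge between comparable projection vertices $b_{H}\sqsubsetneq b_{H'}$ surviving in $V(Y^{k}_{F})-\{b_{F'}\}$ is realised by a length-$2$ path through a vertex of $H\setminus F^{\perp}$, which is non-empty because $b_{H}\in V(Y^{k}_{F})$ forces $H\notperp F$. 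Hence it suffices to show $\Gamma\hookrightarrow Y^{k}_{F}-\{b_{F'}\}$ is a uniform quasi-isometric embedding.

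The key combinatorial step is the vertex-set containment $V(Y^{k}_{F})-\{b_{F'}\}\subseteq V(Y_{F'})$, which realises $Y^{k}_{F}-\{b_{F'}\}$ as an induced subgraph of $Y_{F'}$ with $1$-Lipschitz inclusion. To verify it I would show every piece of $PF'$ is either $b_{F'}$ itself or has been removed already when forming $Y^{k}_{F}$: $F'^{\perp}\subseteq F^{\perp}$ because $F\sqsubseteq F'$; projection vertices over subdomains of $F'^{\perp}$ lie in $B_{F^{\perp}}\cup\{b_{F^{\perp}}\}$; and for $b_{H}$ with $F'\sqsubsetneq H$, the strict chain $F'\sqsubsetneq H\sqsubseteq\cdots\sqsubseteq X$ forces $cl(H)<cl(F')=k+1$, so $b_{H}$ lies in $\{b_{H'}\mid F\sqsubseteq H',\ cl(H')\leq k\}$ and has already been deleted at level $k$.

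With the containment in hand, I would chain the existing quasi-isometries: by the lemma that $(F',W^{F'},\mathfrak{F}_{F'})$ is a CHHF, combined with the lemma that the leveled complement graph $Y^{0}_{F}$ in any CHHF is uniformly quasi-isometric to the full augmented graph, one obtains a uniform quasi-isometry $\Gamma\simeq CF'$; and the hyperbolicity condition for $(X,W,\mathfrak{F})$ applied to $F'$ supplies a $(\delta,\delta)$-quasi-isometric embedding $CF'\hookrightarrow Y_{F'}$. The composition $\Gamma\hookrightarrow Y_{F'}$ is therefore a uniform quasi-isometric embedding, and, together with the $1$-Lipschitz inclusion $Y^{k}_{F}-\{b_{F'}\}\hookrightarrow Y_{F'}$ from the previous paragraph and the subgraph inclusion $\Gamma\subseteq Y^{k}_{F}-\{b_{F'}\}$, for any $v,w\in V(\Gamma)$ I obtain
\[
d_{Y^{k}_{F}-\{b_{F'}\}}(v,w)\leq d_{\Gamma}(v,w)\leq K\,d_{Y_{F'}}(v,w)+K\leq K\,d_{Y^{k}_{F}-\{b_{F'}\}}(v,w)+K
\]
for a constant $K$ depending only on $n$ and $\delta$, which is the required quasi-isometric embedding.

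The main obstacle is the containment in the second paragraph, and in particular the co-level arithmetic ruling out stray projection vertices $b_{H}$ with $F'\sqsubsetneq H$ surviving in $Y^{k}_{F}$; once this is in hand, the rest is a mechanical reassembly of the induced CHHF structure on $F'$ and the hyperbolicity condition applied at $F'$.
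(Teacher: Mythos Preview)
Your proposal is correct and follows essentially the same route as the paper: both arguments establish the vertex-set containment $V(Y^{k}_{F})-\{b_{F'}\}\subseteq V(Y_{F'})$ (the paper phrases this as $PF'\subset PF$ together with a co-level check on the extra projection vertices), then factor the inclusion through $Y_{F'}$ using the quasi-isometry $lk_{Z^{k}_{F}}(b_{F'})\simeq CF'$ from the previous lemma and the $(\delta,\delta)$-embedding $CF'\hookrightarrow Y_{F'}$ from the hyperbolicity condition. The paper packages the final step as a commutative square of inclusions rather than your chain of inequalities, but the content is identical.
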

\begin{proof}
	
	First note that if $F\sqsubsetneq F'$, then $Y_{F}\subset Y_{F'}$. If $v\in PF'$, then either $v\in X $ so $ F\subseteq F'\subseteq lk(v)$, or $v=b_{H}$ with either $F\sqsubsetneq F' \sqsubseteq H $ or $H\perp F'$. For the case $H\perp F'$, by orthogonality and $F\sqsubset F'$, we have $H\perp F$. So $v\in PF'$ implies $v\in PF$, that is, $PF'\subset PF$. Since $Y_{F}=CX-PF$ and $Y_{F'}=CX-PF'$, we have $Y_{F}\subset Y_{F'}$. Note also that for each additional projection vertex $b_{H}$ of $Y^{k}_{F}$ with $b_{H}\neq b_{F'}$, we have $b_{H}\in Y_{F'}$ since the co-level condition guarantees $F' \nsqsubseteq H$ and $F'\perp H$ implies $F\perp H$, contradicts to the assumption that $b_{H}\in Y^{k}_{F}$. Hence we have $Y^{k}_{F}-\{b_{F'}\}\subset Y_{F'}$.\par 
	
	Now consider the following commutative diagram
	\[\begin{tikzcd}
		lk_{Z^{k}_{F}}(b_{F'}) \arrow{r} \arrow[swap]{d} & CF' \arrow{d} \\
		Z^{k}_{F}-\{ b_{F'} \} \arrow{r} & Y_{F'}
	\end{tikzcd}
	\]
	whose arrows are all inclusion on the vertex set. \par 
	We showed that the top arrow is a quasi-isometry map in the proof of lemma 3.16, and the right arrow is quasi-isometric embedding by the hyperbolicity condition of $(X,W,\mathfrak{F})$. Since $Y^{k}_{F}-\{b_{F'}\}\subset Y_{F'}$ is an induced subgraph and hence the inclusion map is Lipschitz while $Z^{k}_{F}-\{ b_{F'}\}$ is quasi-isometric to $Y^{k}_{F}-\{b_{F'}\}$, we have that the bottom arrow is also a Lipschitz map. Finally the left arrow is an induced map and hence is Lipschitz. This conclude that the left inclusion map is a quasi-isometric embedding.

\end{proof}

 \begin{proof}[Proof of lemma 3.10]
 	Now we prove the hyperbolicity of $Y^{k}_{F}$ for $0\leq k \leq cl(F)$ by induction on $k$. \par
 	The initial case $Y^{0}_{F}$ is done in lemma 3.14. \par 
 	If $Y^{k}_{F}$ is $\delta$-hyperbolic, then $Z^{k}_{F}$ is also hyperbolic with uniform constant. Also each link graph of $b_{F'}$ in $Z^{k}_{F}$ with $cl(F')=k+1$ is uniformly hyperbolic and quasi-isometrically embedded in $Z^{k}_{F}-\{ b_{F'} \}$. Take the uniform constant that works for both $Z^{k}_{F}$ and $b_{F'}$ by $\delta'=\delta'(\delta)$. Then the proposition 3.15 implies that $Z^{k}_{F}-\{  b_{F'} \in Z^{k}_{F} \mid F\sqsubseteq F', \ cl(F')=k+1 \}=Z^{k+1}_{F}$ is $\delta''(\delta)$-hyperbolic, hence $Y^{k+1}_{F}$ is hyperbolic with hyperbolicity constant depends only on $\delta$. \par 
 	The induction shows that $Y^{cl(F)}_{F}=Y_{F}$ is hyperbolic with hyperbolicity constant depends only on $n$ and $\delta$.
 \end{proof}
 
 The hyperbolicity of the complement graphs explains how to define various coarsely well-defined projection maps. Now we state the proto-HHS structure on the CHHF $(X,W,\mathfrak{F})$.
 
 \begin{lemma}
 	Let $(X,W,\mathfrak{F})$ be a CHHF triple. Then $(W,\mathfrak{F})$ admits a proto-HHS structure.
 \end{lemma}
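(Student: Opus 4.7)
The plan is to exhibit proto-hierarchy data on $(W,\mathfrak{F})$ and then verify the four structural axioms. I take the index set $\mathfrak{S}:=\mathfrak{F}$ with associated spaces the augmented graphs $CF$, which are $\delta$-hyperbolic (hence geodesic up to a harmless thickening) by the hyperbolicity condition. Declare $\sqsubseteq$ to be subgraph inclusion, so that by condition (1) of the graph factor system $X$ is the unique $\sqsubseteq$-maximal element, $\mathfrak{S}_F=\mathfrak{F}_F$, and every $\sqsubseteq$-chain has length at most the complexity $n$ of $\mathfrak{F}$. Use the orthogonality $\perp$ and transversality $\pitchfork$ from Section 3; the compatibility $F\sqsubseteq F'$ and $F'\perp F''\Rightarrow F\perp F''$ is immediate from $F\subseteq F'$ and $F''\subseteq\mathrm{lk}(F')\subseteq\mathrm{lk}(F)$, while orthogonal domains are vertex-disjoint and hence never $\sqsubseteq$-comparable.

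To define the projections, I exploit that by Lemma 3.10 each complement graph $Y_F$ is uniformly hyperbolic, and the hyperbolicity condition makes $CF$ a uniform quasi-convex subspace of $Y_F$. For a maximal clique $w\in V(W)$, set $\pi_F(w)$ to be the coarse closest-point projection of $w\cap Y_F$ onto $CF$ inside $Y_F$, using a fixed default value (for example, the image of $w$ under the retraction $\phi$ from the proof of Lemma 3.14) when $w\subseteq F^{\perp}$. Bounded diameter of $\pi_F(w)$ and the $(E,E)$-coarse Lipschitz property then follow from standard closest-point projection estimates in hyperbolic spaces, together with the observation that $W$-adjacent maximal cliques are joined by an additional edge of $CX$ and hence lie at $Y_F$-distance at most $1$. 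The density condition $CF\subseteq N_E(\pi_F(W))$ holds because each vertex of $F$ belongs to some maximal clique of $X$ and each projection vertex $b_{F'}\in CF$ is adjacent in $Y_F$ to every vertex of $F'$.

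The relative projections are defined in the same spirit. When $U\sqsubsetneq V$, set $\rho^U_V$ to be the closest-point projection of $V(U)$ onto $CV$ in $Y_V$, which has $CV$-diameter at most $2$ since $V(U)\cup\{b_U\}$ forms a star in $CV$ about $b_U$. Define the downward map $\rho^V_U:CV\setminus N_E(\rho^U_V)\to 2^{CU}$ by closest-point projection in $Y_U$ onto $CU$, using the inclusion $Y_U\subseteq Y_V$ established in the proof of Lemma 3.17. For $U\pitchfork V$, take $\rho^U_V$ and $\rho^V_U$ to be the closest-point projections of $U\cap Y_V$ and $V\cap Y_U$ respectively. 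The main obstacle I expect is the uniform bounded diameter of $\rho^U_V$ in the transverse case: since $U$ can be arbitrarily large, one must show that the shadow of $U\cap Y_V$ in $CV$ is uniformly bounded, using that transversality excludes both $U\subseteq V^{\perp}$ (empty shadow) and $U\sqsubseteq V$ (the already handled nested case), so the surviving part of $U$ projects onto a controlled region of $CV$ via hyperbolicity of $Y_V$ and quasi-convexity of $CV$. Once this estimate is in place, the four proto-hierarchy axioms reduce to routine checks using the hyperbolic geometry developed in Lemmas 3.10--3.17.
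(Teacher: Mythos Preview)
Your outline follows the paper's approach closely: same index set, same nesting by inclusion, same orthogonality, and projections via coarse closest-point projection to $CF$ inside the hyperbolic complement graph $Y_F$. Two points deserve correction.

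First, the ``default value'' you introduce for the case $w\subseteq F^{\perp}$ is both unnecessary and ill-posed. It is unnecessary because a \emph{maximal} clique $w$ can never lie entirely in $F^{\perp}$: if it did, then for any $z\in F$ the clique $w\ast z$ would strictly contain $w$. It is ill-posed because the retraction $\phi$ from the proof of the $Y^0_F$ lemma lands at $b_F$, which belongs to $Y^0_F$ but not to $CF$, so it does not give a value of $\pi_F$. The paper simply observes $w\cap Y_F\neq\emptyset$ and proceeds.

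Second, the ``main obstacle'' you flag in the transverse case is not an obstacle at all, and leaving it open is the one genuine gap in your write-up. When $U\pitchfork V$ the projection vertex $b_U$ lies in $Y_V$: indeed $b_U\in PV$ would force either $V\sqsubseteq U$ or $U\perp V$, both excluded by transversality. Since $b_U$ is adjacent in $CX$ (hence in the induced subgraph $Y_V$) to every vertex of $U$, the set $U\cap Y_V$ already has diameter at most $2$ in $Y_V$ \emph{before} projecting, so its image under the coarsely Lipschitz map $p_V$ is uniformly bounded. The paper makes exactly this observation, phrased for the slightly larger set $PU\cap Y_V$ (which contains $b_U$ and has diameter at most $4$ via a vertex of $U$); using $PU$ rather than $U$ has the minor advantage of unifying the nested and transverse definitions of $\rho^U_V$. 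Once you insert this one-line argument, your proof is complete and essentially identical to the paper's.
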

 
 \begin{proof} 
 	First note that we already verified the orthogonality axiom of $(W,\mathfrak{F})$.
 	\begin{enumerate}
 		\item (Projections.) First we define the projections for each domain.
 		If $F=X$, then the map $\pi_{X} : W \rightarrow 2^{CX}$ is simply an inclusion on graphs, sending a point $x\in W$, considered as a maximal clique of $X$, to $x\subseteq CX$ itself. We can also check that the projection map $\pi_{X}$ is coarsely well-defined Lipschitz map. Since $x$ is a clique subgraph of $X$ and $X$ is an induced subgraph of $CX$, $x$ is also the clique subgraph of $CX$ and hence have diameter at most 1. By construction of $CX$, if $x,y\in W$ are $W$-adjacent maximal cliques, then each vertex of $x$ is adjacent to each vertex of $y$ in $CX$. $\pi_{X}$ sends an edge $[x,y]$ of $W$ to a clique subgraph $x \ast y$ of $CX$, which shows that $\pi_{X}$ is a $(1,1)$-coarsely Lipschitz map. Since each additional vertices in $B$ is adjacent to at least one vertex of $X$ and $X\subseteq \pi_{X}(W)$, we have $CX\subseteq N_{2}(\pi_{X}(W))$. This shows that the map $\pi_{X}$ is the desired projection map. \par 
 		
 		For each domains $F\in\mathfrak{F}-\{X\}$, we define a projection map $\pi_{F}:W\rightarrow 2^{CF}$ as follows. Let $p_{F}:Y_{F}\rightarrow 2^{CF}$ be the coarse closest point projection, namely, $p_{F}(x)=\{ y\in CF \mid d_{Y_{F}}(x,y)\leq d_{Y_{F}}(x,CF)+1        \}$. Let $w$ be a vertex of $W$, considered as a maximal clique in $X$. Since $w$ is maximal, $w\cap Y_{F}$ is non-empty (otherwise $w\subset F^{\perp}$, which implies that for some $z\in F\neq \emptyset$, $z\in F\subset lk(w)$ induces a clique subgraph $w\ast z$ of $X$ that properly contains $w$, which violates the maximality of $w$.) and has diameter at most 1. We define $\pi_{F}(w)=p_{F}(w\cap Y_{F})$. We defined $\pi_{F}$ on the vertices of $W$, so define on the edge set of $W$ by taking $\pi_{F}(e)=\pi_{F}(x)\cup \pi_{F}(y)$ if $e$ is an edge of $W$ and $x,y$ are its endpoints. 
 		\par 
 		We need to show that the projection map $\pi_{F}$ is coarsely well-defined, uniformly Lipschitz map. First note that $Y_{F}$ is hyperbolic and $CF$ is quasi-isometrically embedded in $Y_{F}$, which implies that $CF$ is a quasi-convex subgraph of $Y_{F}$. Since coarse nearest point projections to quasi-convex subsets of hyperbolic spaces are coarsely Liptschitz maps, $p_{F}$ is uniformly Lipschitz. Since $(w\cap Y_{F})$ has diameter 1, its Lipschitz projection image $p_{F}(w\cap Y_{F})$ has uniformly bounded diameter and $\pi_{F}$ is uniformly coarsely well-defined. Also $x,y$ are $W$-adjacent maximal cliques, then $x\cup y$ forms a clique subgraph in $CX$, hence the diameter of $(x\cup y)\cap Y_{F}$ and its image $\pi_{F}([x,y])=p_{F}(x\cap Y_{F})\cup p_{F}(y\cap Y_{F})$ has uniformly bounded diameter. It concludes that $\pi_{F}$ is uniformly Lipschitz map as desired. \par 
 		Finally $CF\subseteq N_{2}(\pi_{F}(W))$ since for each vertex $v\in F$, any maximal clique $w\subset X$ that contains $v$ satisfies $v\in\pi_{F}(w)$ and $CF\subseteq N_{2}(F)$.
 		\par 
 		For each $F\in\mathfrak{F}$ and $w\in W$, we often call $\pi_{F}(w)$ the $projections$ or, to distinguish from the maps, the $coordinates$ of $w$ on $F$.
 		
 		\item (Nesting.) We define the nesting relation $\sqsubseteq$ on $\mathfrak{F}$ by $F\sqsubseteq F'$ whenever $F\subseteq F'$ as the subgraphs in $X$. It is clearly a partial order and have a unique $\sqsubseteq$-maximal element $X$.
 		\par For any domains $F,F'\in \mathfrak{F}$ with $F\sqsubsetneq F'$, we also define the relative projections $\rho^{F}_{F'}$ by the projection image of the projection vertices $p_{F'}(PF\cap Y_{F'})$. 
 		Since $F\sqsubsetneq F'$ implies that $b_{F}\in Y_{F'}$ so $PF\cap Y_{F'}\neq \emptyset$ and $\rho^{F}_{F'}$ is well-defined. Also the diameter of $PF\cap Y_{F'}$ is bounded by at most 4, since $F\sqsubsetneq F'\subset Y_{F'}$ and any vertices of $PF$ can be joined by an edge path of length 2 to some vertex in $F$. Since $p_{F'}$ is a uniformly lipschitz projection, $\rho^{F}_{F'}$ is uniformly bounded set.
 		\par Finally we define the downward relative projections between the nesting pairs $F\sqsubsetneq F'$ by $\rho^{F'}_{F}=p_{F}$ over $CF'\cap Y_{F}$ and takes the value $\emptyset$ otherwise.

 		\item(Orthogonality.) Since $F^{\perp}\cap F=\emptyset$, the orthogonality relation is anti-reflective. To check the symmetry, first note that for any pair of subgraphs $F,F'\subset X$, $F\subseteq F'$ implies $(F')^{\perp}\subseteq F^{\perp}$ since $v\in (F')^{\perp}$ implies $F\subseteq F'\subseteq lk(v)$ and hence $v\in F^{\perp}$. If $F'\perp F$, then $F'\subseteq F^{\perp}$ and hence $F\subseteq (F^{\perp})^{\perp}\subseteq (F')^{\perp}$ which implies $F\perp F'$.
 		
 		\par 
 		If $F\sqsubsetneq F'$ and $F''\perp F'$, then $F\perp F''$ since $F\subseteq F'\subseteq  (F'')^{\perp}$. \par 
 		Finally since $F$ and $F^{\perp}$ are disjoint, if $F\perp F'$ or equivalently $F'\subset F^{\perp}$, then $F$ and $F'$ are disjoint and hence not nesting comparable.
 		
 		\item(Transversality.) We define two domains $F, F'$ are transversal, denoted by $F \pitchfork F'$, if they neither nested to one another nor orthogonal. 
 		In this case, by construction of $Y_{F}$, the projection vertex $b_{F'}$ is contained in $Y_{F}$ so $PF\cap Y_{F'}\neq \emptyset$. Also $F\nsubseteq (F')^{\perp}$ implies that $F\cap Y_{F'}\neq \emptyset$. Hence $PF\cap Y_{F'}$ has diameter at most $4$ in $Y_{F'}$ via some vertex of $F$. We define the relative projections  $\rho^{F}_{F'}\subset CF'$ by $p_{F'}(PF\cap Y_{F'})$. Again $\rho^{F'}_{F}$ is uniformly bounded since $p_{F'}$ is a uniformly lipschitz map.
 		\par  
 		
 	\end{enumerate}

 \end{proof}
 
 To use the induction on the (proto)-HHS structure of non-maximal domains, we need to check that the induced proto-HHS structures are compatible with that of its domains. More precisely, we need to show that the various projections are uniformly coarsely coincide and the relations of the domains are preserved in the proto-HHS structure of its domains. Since the relations are purely graph property and can be easily checked, we only check the compatibility of projections. 
 
 \begin{lemma}
 	Let $F'\in\mathfrak{F}$ be a domain. For each $F\in\mathfrak{F}_{F'}-\{F'\}$, the induced complement graph $Y^{F'}_{F}=Y_{F}\cap CF'$ admits a uniform quasi-isometric embedding into $Y_{F}$. 
 \end{lemma}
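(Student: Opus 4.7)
Plan: The plan is to verify the identity $Y^{F'}_F = Y_F \cap CF'$ first, so that the natural map is the inclusion of an induced subgraph and hence is $1$-Lipschitz, and then to establish the lower distance bound by constructing a coarsely Lipschitz retraction onto $Y^{F'}_F$.

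For the set-theoretic identity, I would unfold the definitions. The vertex set of $Y^{F'}_F$, computed inside the induced CHHF $(F',W^{F'},\mathfrak{F}_{F'})$ (which is a CHHF by Lemma 3.9), is $V(CF')-PF_{F'}$. The plan is to show $PF\cap V(CF')=PF_{F'}$ by a direct check using $F\subseteq F'$: we have $F^{\perp_{F'}}=F^\perp\cap F'$, each projection vertex $b_H$ that lies in $V(CF')=V(F')\cup B_{F'}$ satisfies $H\subsetneq F'$, and the three defining conditions of $PF$ restrict cleanly to the three defining conditions of $PF_{F'}$. Both $Y^{F'}_F$ and $Y_F$ are induced subgraphs of $CX$, so the edge sets match and $i:Y^{F'}_F\hookrightarrow Y_F$ is $1$-Lipschitz.

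For the quasi-isometric embedding direction, I would build a coarsely Lipschitz retraction $\phi:Y_F\to Y^{F'}_F$ with $\phi|_{Y^{F'}_F}=\mathrm{id}$; the existence of such a retraction gives $d_{Y^{F'}_F}(x,y)=d_{Y^{F'}_F}(\phi(x),\phi(y))\leq K\,d_{Y_F}(x,y)+K$ for $x,y\in Y^{F'}_F$, which combined with the $1$-Lipschitz inclusion yields the QI embedding. The structural input is the subgraph $CF$: it lies in $Y^{F'}_F$ (because $F\subseteq F'$), it is quasi-isometrically embedded in $Y_F$ by the hyperbolicity condition on $(X,W,\mathfrak{F})$, and it is quasi-isometrically embedded in $Y^{F'}_F$ by the hyperbolicity condition on the induced CHHF (with uniform constants via Lemma 3.10 applied to this induced triple). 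In particular $CF$ is uniformly quasi-convex in hyperbolic $Y_F$, so the coarse nearest-point projection $p_F:Y_F\to CF$ is uniformly coarsely Lipschitz. The candidate retraction is $\phi=\mathrm{id}$ on $Y^{F'}_F$ and $\phi=p_F$ on $Y_F\setminus Y^{F'}_F$; coarse Lipschitzness for edges with both endpoints in $Y^{F'}_F$ is immediate, and for edges with both endpoints in $Y_F\setminus Y^{F'}_F$ it follows from the Lipschitz property of $p_F$ together with the fact that $CF\hookrightarrow Y^{F'}_F$ is $1$-Lipschitz.

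The main obstacle is the mixed case: an edge $(v,w)$ of $Y_F$ with $v\in Y^{F'}_F$ and $w\in Y_F\setminus Y^{F'}_F$. One needs $d_{Y^{F'}_F}(v,\phi(w))$ uniformly bounded, but $v$ need not be close to $CF$ inside $Y^{F'}_F$, so the naive definition fails. I expect to resolve this by refining $\phi$ on such $w$, using the combinatorial classification of vertices of $Y_F\setminus Y^{F'}_F$ (a vertex of $X$ outside $F'\cup F^\perp$, or a projection vertex $b_H$ with $H\not\sqsubseteq F'$ and $H\not\perp F$ and $F\not\sqsubseteq H$) together with the hierarchy condition and the adjacency rules in $CX$ (edges of $X$, additional edges from $W$-adjacent maximal cliques, and projection-vertex edges), in the same style as the case analysis in the proof of Lemma 3.14. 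The goal is to produce, for each such $w$ adjacent to $v$, a vertex of $Y^{F'}_F$ (either a vertex of $F'$ neighbouring $v$ via the $W$-adjacency structure, or a projection vertex $b_H$ with $H\subsetneq F'$ seeing both $v$ and $w$) and route $\phi(w)$ through it, after which the uniformly Lipschitz property of $\phi$ becomes a finite case check with constants depending only on $n$ and $\delta$.
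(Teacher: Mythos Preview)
Your approach is genuinely different from the paper's, and the gap you yourself flag---the mixed-edge case---is real and not resolved by the case analysis you gesture at.

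The obstacle is structural. A coarse retraction $\phi:Y_F\to Y^{F'}_F$ must assign to each $w\in Y_F\setminus Y^{F'}_F$ a value $\phi(w)$ \emph{independent of which neighbour $v\in Y^{F'}_F$ one is considering}. But such a $w$ (say $w\in X\setminus F'$) can be adjacent, via additional $W$-edges, to several vertices $v_1,v_2\in F'\cap Y_F$; these are automatically close in $Y_{F'}$ (distance $\le 2$ through $w$) and hence in $CF'$ (since $CF'\hookrightarrow Y_{F'}$ is a QI embedding), but there is no reason they are close in $Y^{F'}_F$. The point is that $Y^{F'}_F$ is obtained from $CF'$ by deleting $PF_{F'}$, and this deletion removes the cone-points $b_H$ (for $F\subseteq H\subsetneq F'$) that make large pieces of $CF'$ bounded---so $Y^{F'}_F\hookrightarrow CF'$ is \emph{not} a QI embedding, and closeness in $CF'$ does not transfer. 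The ``route $\phi(w)$ through a projection vertex $b_{H}$'' idea works cleanly when $w=b_H$ (take $b_{H\cap F'}$), but for $w\in X$ with only additional-edge neighbours in $F'$ there is no canonical domain to project to, and your sketch does not supply one.

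The paper avoids this entirely by \emph{not} constructing a retraction. Instead it runs the same co-level induction used to prove hyperbolicity of $Y_F$ (Lemma~3.10), now tracking an auxiliary subgraph $Q^k_F$ spanned by $Y^k_F\cap(CF'\cup\{b_{F'}\})$. The base case $Q^0_F$ is bounded (everything is within bounded distance of $b_{F'}$), hence trivially QI-embedded in $Y^0_F$; the inductive step passes to the modified graphs $R^k_F\subset Z^k_F$ and invokes the ``moreover'' clause of Proposition~3.15, after checking that each co-level-$k$ projection vertex $b_H\in R^{k-1}_F$ has its entire $Z^{k-1}_F$-star contained in $R^{k-1}_F$ (which follows from $H\subseteq F'$). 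At the top of the induction one obtains exactly the QI embedding $Y^{F'}_F=Q^{cl(F)}_F\hookrightarrow Y^{cl(F)}_F=Y_F$. This reuses existing machinery rather than a fresh case analysis, and sidesteps the issue above because the vertices being removed are precisely the problematic cone-points, and Proposition~3.15 is designed to control exactly that removal.
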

 \begin{proof}
 	The idea of proof is similar to the proof of the hyperbolicity of $Y_{F}$. We will construct similar graphs with $Y^{k}_{F}$ and $Y^{k}_{F}\cap CF'$ which admits quasi-isometric embeddings, and using the 'moreover' part of proposition 3.14, inductively remove additional projection vertices while quasi-isometric embedding property is preserved. \par 
    Let $Q^{k}_{F}$ be an induced subgraph of $Y^{0}_{F}$ spanned by the vertices of $Y^{k}_{F}\cap (CF'\cup b_{F'})$. Then the initial case $Q^{0}_{F}$ has diameter at most 2. To see this, note that $F\sqsubsetneq F'$ impliess $cl(F')>0$ and in particular, $b_{F'}\in Y^{0}_{F}$. Hence $b_{F'}\in Q^{0}_{F}$ is adjacent to all the other vertices of $Q^{0}_{F}$. Indeed, whenever $k\leq cl(F')-1$, $Q^{k}_{F}$ has diameter at most 2. On the other hand, $Q^{cl(F)}_{F}$ is spanned by $Y^{cl(F)}_{F} \cap (CF'\cup b_{F'})=Y_{F}\cap CF'$. \par 
    We will argue by induction on $k$ such that for all $0\leq k \leq cl(F)$, the inclusion $Q^{k}_{F}\rightarrow Y^{k}_{F}$ is a uniform quasi-isometric embbedding. The initial case is automatic, since $Q^{0}_{F}$ is bounded set. \par 
    Before we run the induction process, we again slightly modify the graphs $Q^{k}_{F}$ and $Y^{k}_{F}$ in a way that assigns to each projection vertices appropriate edges so removing co-level $k+1$ projection vertices works with proposition 3.14. \par 
    Let $R^{k}_{F}$ be a graph obtained from $Q^{k}_{F}$ by adding edges between the projection vertices of the nesting comparable domains. Note that by the same argument used for $Z^{k}_{F}$, $R^{k}_{F}$ is quasi-isometric to $Q^{k}_{F}$ and the inclusion map from $Q^{k}_{F}\rightarrow Y^{k}_{F}$ extends to an induced map $R^{k}_{F}\rightarrow Z^{k}_{F}$. Moreover, the quasi-isometric embedding $Q^{k-1}_{F}\rightarrow Y^{k-1}_{F}$ induces the quasi-isometric embedding $R^{k-1}_{F}\rightarrow Z^{k-1}_{F}$.\par 
    By induction argument, we have that the induced map $R^{k-1}_{F}\rightarrow Z^{k-1}_{F}$ is a quasi-isometric embedding. Also we already showed that $Z^{k-1}_{F}$ is hyperbolic, the link graphs of the projection vertices of the domains containing $F$ and have co-level $k$ are uniformly hyperbolic, quasi-isometrically embedded. It only remains to show that for each projection vertex of $R^{k-1}_{F}$ whose domains has co-level $k$ and contains $F$, its link graph in $Z^{k-1}_{F}$ is contained in $R^{k-1}_{F}$. To show this, note that if $b_{H}\in R^{k-1}_{F}$, $F\sqsubseteq H$ and $cl(H)=k$, then since $b_{H}\in R^{k-1}_{F}\subseteq Y^{0}_{F}\cap CF'$, we have that $H\sqsubseteq F'$. Also the link graph of $b_{H}$ in $Z^{k-1}_{F}$ is spaned by the vertices contained in $H\cap Y_{F}$ and the projection vertices $\{b_{J}\in Y^{k-1}_{F} \vert J\sqsubsetneq H\}$. Since $H\subseteq F'$, $H\cap Y_{F}=H\cap CF' \cap Y_{F}$. Since $H\sqsubseteq F'$, whenever $J\sqsubsetneq H$, $J\sqsubsetneq F'$ and hence $\{b_{J} \in Y^{k-1}_{F}\vert J\sqsubsetneq H\}\subset Y^{k-1}_{F}\cap CF'$. Since $R^{k-1}_{F}$ is an induced subgraph of $Z^{k-1}_{F}$, the link graph of each $b_{H}$ is contained in $R^{k-1}_{F}$ as desired. By second part of proposition 3.14, the induced map $R^{k}_{F}\rightarrow Z^{k}_{F}$ is uniform quasi-isometric embedding and in particular, $Q^{cl(F)}_{F}(=Y_{F}\cap CF')\rightarrow Y^{cl(F)}_{F}(=Y_{F})$ is a quasi-isometric embedding. 
    
 \end{proof}
  The compatibility of the projection over the complement graphs then follows.
  \begin{lemma}
  	If $F\sqsubsetneq F'$, then $p^{F'}_{F} : Y^{F'}_{F}\rightarrow CF$, the projection of the induced complement graph, is uniformly coarsely coincide with the projection map $p_{F} : Y_{F}\rightarrow CF$. 
  \end{lemma}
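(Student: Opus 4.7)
The plan is to show that for every $x\in Y^{F'}_{F}$, the sets $p^{F'}_{F}(x)$ and $p_{F}(x)$ have uniformly bounded Hausdorff distance in $CF$, with the bound depending only on $n$ and $\delta$. The key inputs are the uniform hyperbolicity of $Y_{F}$ and $Y^{F'}_{F}$ from Lemma 3.10, the uniform quasi-isometric embedding $Y^{F'}_{F}\hookrightarrow Y_{F}$ from Lemma 3.19, and the uniform quasi-convexity of $CF$ in both $Y^{F'}_{F}$ and $Y_{F}$, which follows from the hyperbolicity condition applied to $(F',W^{F'},\mathfrak{F}_{F'})$ and to $(X,W,\mathfrak{F})$ respectively.

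Fix $x\in Y^{F'}_{F}$ and pick any $y_{1}\in p_{F}(x)$ and $y_{2}\in p^{F'}_{F}(x)$; both lie in $CF\subseteq Y^{F'}_{F}\subseteq Y_{F}$. I first take a $Y^{F'}_{F}$-geodesic $\alpha$ from $x$ to $y_{2}$. By Lemma 3.19, $\alpha$ is a uniform quasi-geodesic in the hyperbolic space $Y_{F}$, so by the Morse lemma it fellow-travels any $Y_{F}$-geodesic $\beta$ from $x$ to $y_{2}$ within a uniform constant $M$. Since $y_{1}$ is a nearest-point projection of $x$ to the uniformly quasi-convex set $CF$ in $Y_{F}$ and $y_{2}\in CF$, the standard bottleneck property for projections to quasi-convex sets in hyperbolic spaces forces $\beta$ to pass within a uniform constant $N$ of $y_{1}$. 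Combining these facts, there exists $z\in\alpha$ with $d_{Y_{F}}(z,y_{1})\leq M+N$, and applying the quasi-isometric embedding of Lemma 3.19 a second time yields $d_{Y^{F'}_{F}}(z,y_{1})\leq K_{0}$ for a uniform constant $K_{0}$.

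Because $z$ lies on the $Y^{F'}_{F}$-geodesic $\alpha$ from $x$ to $y_{2}$, we have
\[
d_{Y^{F'}_{F}}(x,y_{1})\leq d_{Y^{F'}_{F}}(x,z)+d_{Y^{F'}_{F}}(z,y_{1})\leq d_{Y^{F'}_{F}}(x,y_{2})+K_{0}.
\]
Since $y_{2}$ coarsely realizes the $Y^{F'}_{F}$-distance from $x$ to $CF$, this exhibits $y_{1}$ as a uniform near-minimizer of that same distance. Standard hyperbolic geometry then guarantees that two uniform near-minimizers of the distance from $x$ to a uniformly quasi-convex subset are within uniformly bounded distance in $Y^{F'}_{F}$, and the quasi-isometric embedding $CF\hookrightarrow Y^{F'}_{F}$ transfers this bound to the $CF$-metric, giving the desired uniform coarse coincidence of $p^{F'}_{F}$ and $p_{F}$ on $Y^{F'}_{F}$.

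The principal technical obstacle is that $Y^{F'}_{F}\hookrightarrow Y_{F}$ is only a \emph{multiplicative} quasi-isometric embedding, so one cannot compare $d_{Y^{F'}_{F}}(x,CF)$ and $d_{Y_{F}}(x,CF)$ up to an additive error directly; this is precisely why the argument must pass through a geodesic-fellow-traveling argument in $Y_{F}$ rather than a one-step distance comparison. Propagating the uniformity of all the constants is straightforward, as the hyperbolicity, quasi-convexity and quasi-isometry constants supplied by Lemma 3.10 and Lemma 3.19 depend only on $n$ and $\delta$.
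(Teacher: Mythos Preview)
Your proof is correct and follows essentially the same route as the paper's: take a $Y^{F'}_{F}$-geodesic from $x$ to $p^{F'}_{F}(x)$, use the quasi-isometric embedding of Lemma 3.19 to make it a quasi-geodesic in $Y_{F}$, invoke the bottleneck property of nearest-point projection to quasi-convex sets to find a point on it close to $p_{F}(x)$, and conclude. The only cosmetic difference is that the paper finishes with an explicit length comparison (the subpath from $x$ to $z$ concatenated with a short jump to $p_{F}(x)$ gives a competitor to $\alpha$, forcing $z$ close to $p^{F'}_{F}(x)$ directly), whereas you package this last step as the standard ``two near-minimizers to a quasi-convex set are uniformly close'' lemma; both amount to the same triangle-inequality manipulation.
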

  \begin{proof}
  	The proof is identical to [proposition 4.11, \cite{BHMS20}]. Let $v\in Y^{F'}_{F}$. Let $\gamma$ be a $Y^{F'}_{F}$-geodesic from $v$ to $p^{F'}_{F}(v)$. Since $Y^{F'}_{F}$ is an induced subgraph of $Y_{F}$, $\gamma$ is a uniform quasi-geodesic of $Y_{F}$. Hence $\gamma$ fellow-travels with a geodesic of $Y_{F}$ joining $v$ and $p^{F'}_{F}(v)$. Since $p^{F'}_{F}(v)\in CF$, $\gamma$ contains some point $u$ which lies at uniformly bounded distance, say C, from $p_{F}(v)$ (in $Y^{F'}_{F}$). The concatenation of the subpath of $\gamma$ from $v$ to $u$ and the geodesic from $u$ to $p_{F}(v)$ forms a path between $v$ and $p_{F}(v)$ with length at most $d_{Y^{F'}_{F}}(v,u)+C\geq \vert \gamma \vert$. Hence $d_{Y^{F'}_{F}}(u,p^{F'}_{F}(v))\leq C$ and in particular, $d_{Y^{F'}_{F}}(p_{F}(v),p^{F'}_{F}(v))\leq 2C$. Since both $p_{F}(v),p^{F'}_{F}(v)$ are contained in $CF$, we conclude that $p_{F}(v)$ and $p^{F'}_{F}(v)$ are uniformly close, as desired.
  \end{proof}
  
  It is direct from the definition and lemma 3.19 that the relative projections are uniformly coarsely coincide in all possibilities. so we only need to check the compatibility of the projection maps from $X$-graphs. 
 
 \begin{lemma}
 	If $x$ is a maximal clique subgraph contained in $F^{\perp}$, then the map $x : W^{F} \rightarrow W$ given by the hierarchy condition commutes with the projection map $\pi_{F'}$ for each domain $F'\in\mathfrak{F}_{F}$. That is, the following diagram uniformly coarsely commutes :
 	\[\begin{tikzcd}
 		W^{F} \arrow{r}{x} \arrow[swap]{d}{\pi^{F}_{F'}} & W \arrow{d}{\pi_{F'}} \\
 		C_{F}F' \arrow[r, equal] & CF'
 	\end{tikzcd}
 	\]
 \end{lemma}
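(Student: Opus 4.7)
The plan is to unpack both sides of the diagram in terms of the nearest-point projection $p_{F'}:Y_{F'}\to CF'$, invoke Lemma 3.20 to identify $p^F_{F'}$ with the restriction of $p_{F'}$ to $Y^F_{F'}=Y_{F'}\cap CF$, and exploit the key fact that $x(w)$ is a clique of $X$, hence of $CX$, so that $x(w)\cap Y_{F'}$ is a clique of $Y_{F'}$ of diameter at most one.

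First I would observe that since $F'\subseteq F$, the inclusion $F^{\perp}\subseteq (F')^{\perp}$ holds, so every vertex of $x\subseteq F^{\perp}$ already lies in $PF'$ and is removed in passing to $Y_{F'}$. Nevertheless, the maximal clique $x(w)\subseteq X$ survives partially as $x(w)\cap Y_{F'}$, and because $Y_{F'}$ is an induced subgraph of $CX$, this intersection is itself a clique of $Y_{F'}$ and so has diameter at most $1$. Applying the uniformly coarsely Lipschitz map $p_{F'}$ (which is Lipschitz because $CF'$ is quasi-convex in the hyperbolic graph $Y_{F'}$, by Lemma 3.10 together with the hyperbolicity condition), the set $\pi_{F'}(x(w))=p_{F'}(x(w)\cap Y_{F'})$ therefore has uniformly bounded diameter in $CF'$.

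Next I would verify the crucial non-emptiness $w\cap Y_{F'}\neq\emptyset$. If, toward contradiction, $w\subseteq (F')^{\perp}$, pick any vertex $v'\in F'$; then $v'$ is adjacent to every vertex of $w$, so $w\cup\{v'\}$ is a clique of $F$, and the maximality of $w$ in $F$ forces $v'\in w$, hence $F'\subseteq w$. But $F'\cap(F')^{\perp}=\emptyset$ (simple graph has no self-loops), so $F'\subseteq w\setminus (F')^{\perp}\subseteq w\cap Y_{F'}$, contradicting the assumption $w\subseteq (F')^{\perp}$. Combined with $w\subseteq x(w)$ this gives the chain $\emptyset\neq w\cap Y_{F'}\subseteq x(w)\cap Y_{F'}$. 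This is the one place where the maximality of $w$ as a clique of $F$ is essential, and I expect it to be the main obstacle: everything else in the proof is essentially structural bookkeeping.

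Finally, since $w\subseteq F$ we have $w\cap Y^F_{F'}=w\cap Y_{F'}$, and Lemma 3.20 yields that $\pi^F_{F'}(w)=p^F_{F'}(w\cap Y_{F'})$ is uniformly close to $p_{F'}(w\cap Y_{F'})$. The latter is a non-empty subset of $\pi_{F'}(x(w))=p_{F'}(x(w)\cap Y_{F'})$, which by the first step has uniformly bounded diameter. Hence $\pi^F_{F'}(w)$ and $\pi_{F'}(x(w))$ are uniformly close in $CF'$, establishing the uniform coarse commutativity of the diagram.
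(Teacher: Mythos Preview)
Your proof is correct and follows essentially the same route as the paper: unpack both projections in terms of $p_{F'}$ and $p^{F}_{F'}$, use the inclusion $w\cap Y^{F}_{F'}\subseteq w\cap Y_{F'}\subseteq x(w)\cap Y_{F'}$, and then invoke the projection-compatibility lemma (Lemma~3.20) to conclude. Your argument is somewhat more explicit than the paper's---you spell out the non-emptiness of $w\cap Y_{F'}$ and the diameter bound on $\pi_{F'}(x(w))$ where the paper just asserts them---but the strategy and the key lemma are the same.
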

 \begin{proof}

 	Let $\pi^{F}_{F'} $ be the projection map from $W^{F}$ to $C_{F}F'=CF'$. For any maximal clique $w\subset F$ and $F'\in\mathfrak{F}_{F}$, $w\cap Y^{F}_{F'}$ is non-empty and $\pi^{F}_{F'}(w)=p^{F}_{F'}(w\cap Y^{F}_{F'})$. On the other hand, $x(w)$ is a maximal clique of $X$ that contains $x\ast w$ so $x(w)\cap Y_{F'}$ contains $w\cap Y_{F'}$. Since $Y^{F}_{F'}=Y_{F'}\cap CF$, we have $w\cap Y^{F}_{F'}\subset w\cap Y_{F'}$. Hence by lemma 3.19, $\pi^{F}_{F'}(w)$ is uniformly coarsely coincide with $\pi_{F'}(x(w))$. This proves that the projection maps uniformly coarsely commute with the induced map $x$. 
 \end{proof} 
 
 The last ingredient is the slightly modified version of the strong bounded geodesic image(Strong BGI) for graphs, which was proved by [\cite{BHMS20}, Lemma 5.1]. 
 \begin{proposition}[Strong BGI]
 	Let $X$ be a $\delta$-hyperbolic graph and let $V$ be a nonempty subgraph of $X$. Let $L_{V}$ be an induced subgraph of $X$ such that for each vertex $x\in V$, $L_{V}\subset N_{2}(lk(x))$. Let $X_{V}$ be the induced subgraph of $X$ whose vertex set is $X^{(0)}-V^{(0)}$, and suppose $X_{V}$ is $\delta$-hyperbolic. \par 
 	Suppose that $L_{V}$ is $\delta$-hyperbolic and $(\delta,\delta)$-quasi-isometrically embedded in $X_{V}$, and let  $\pi:X_{V}\rightarrow L_{V}$ be the coarse nearest-point projection. \par 
 	Then for any $x,y\in X$ and the geodesic $\gamma$ in $X$ from $x$ to $y$, if $\gamma\cap V=\emptyset$, then $d_{L_{V}}(\pi(x),\pi(y))<C$ for some $C=C(\delta)$. 
 \end{proposition}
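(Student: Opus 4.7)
The plan is to argue by contradiction: assuming $d_{L_V}(\pi(x),\pi(y))$ is very large, I will derive that two well-separated points on $\gamma$ are uniformly close in $X$, contradicting the fact that $\gamma$ is an $X$-geodesic. The main tools are the classical bounded geodesic image property in the hyperbolic graph $X_V$ (where $L_V$ is quasi-convex), together with the coupling hypothesis that $L_V$ lies close in $X$ to $lk(v)$ for every $v\in V$.

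First, I would upgrade $\gamma$ to a geodesic in $X_V$. Because $\gamma\cap V=\emptyset$, every edge of $\gamma$ lies in the induced subgraph $X_V$, and because every $X_V$-path is also an $X$-path, one has $d_{X_V}(x,y)\geq d_X(x,y)=|\gamma|$, which forces $\gamma$ to be a geodesic in $X_V$ as well. In particular, for any pair of points $p_1,p_2\in\gamma$, the $X$-distance and the $X_V$-distance between them agree and equal the arc length of $\gamma$ between them.

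Second, I would observe that $L_V$ is uniformly quasi-convex in the $\delta$-hyperbolic graph $X_V$, since it is a $(\delta,\delta)$-quasi-isometrically embedded copy of a hyperbolic graph, and then invoke the classical strong form of the bounded geodesic image property for quasi-convex subsets of hyperbolic spaces: there exist constants $M_1,M_2$ depending only on $\delta$ such that if $d_{L_V}(\pi(x),\pi(y))\geq N$, then $\gamma$ contains two points $p_1,p_2$ with $d_{X_V}(p_1,p_2)\geq N-M_1$ and with each $p_i$ lying within $X_V$-distance $M_2$ of $L_V$.

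Finally, I would feed this into the coupling hypothesis. Fixing any $v\in V$ (nonempty by assumption), the inclusion $L_V\subset N_2(lk(v))$ together with $d_X\leq d_{X_V}$ gives $d_X(p_i,v)\leq M_2+3$ for $i=1,2$, hence $d_X(p_1,p_2)\leq 2(M_2+3)$ by the triangle inequality. Combined with $d_X(p_1,p_2)=d_{X_V}(p_1,p_2)\geq N-M_1$ from the first step, this yields $N\leq M_1+2M_2+6$, a constant depending only on $\delta$. The main obstacle I foresee is the precise formulation of the BGI statement needed in the second step: it must produce a pair of points on $\gamma$ separated by roughly $N$ along the geodesic and each close to $L_V$, rather than merely a single nearest approach. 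This is a standard but technical strengthening of the form used in \cite{BHMS20}, Lemma 5.1, which the author indicates they are adapting.
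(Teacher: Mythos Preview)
Your argument is correct and matches the approach the paper indicates (following \cite{BHMS20}): observe that $\gamma$ is an $X_V$-geodesic, use the gate property of quasi-convex subsets of hyperbolic spaces to find $p_1,p_2\in\gamma$ close in $X_V$ to $\pi(x),\pi(y)$, and then bound $d_X(p_1,p_2)$ via any $v\in V$ using $L_V\subset N_2(lk(v))$ --- which is precisely the single place the paper says the link condition enters. One minor quantitative slip: because the embedding $L_V\hookrightarrow X_V$ is only $(\delta,\delta)$-quasi-isometric, your second step should give $d_{X_V}(p_1,p_2)\geq N/\delta - M_1$ rather than $N-M_1$, so the final bound becomes $N\leq \delta(M_1+2M_2+6+\delta)$, but the conclusion is unaffected.
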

 Compare to the strong BGI in \cite{BHMS20}, we weaken the condition $L_{V}\subset lk(x), x\in V$  by its small neighborhood $L_{V}\subset N_{2}(lk(x)), x\in V$ so we can apply the proposition to the CHHF structures. Nevertheless, proof of the proposition requires the condition $L_{V}\subset lk(x), x\in V$ only for bounding the distance between $\pi(x),\pi(y)$ in $X$ via some vertex in $V$, which is true if we weaken the condition by its 2 neighbourhood, so the proposition can be proved by mimic the arguement in \cite{BHMS20}. \par 
 
 \begin{lemma}
 	There exists  $C=C(n, \delta)$ so that for each $F\in\mathfrak{F}-\{ X \}$ and $x,y\in Y_{F}$, if $d_{CF}(p_{F}(x),p_{F}(y))\geq C$ then any geodesic $\gamma\subset CX$ from $x$ to $y$ intersects $PF$.
\end{lemma}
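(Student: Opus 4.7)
The plan is to deduce this as a direct application of the Strong BGI proposition (Proposition 3.21). I would take the ambient hyperbolic graph of that proposition to be $CX$ itself, which is $\delta$-hyperbolic because $X \in \mathfrak{F}$ and the hyperbolicity condition on the CHHF applies to the maximal domain. Setting $V := PF$, the induced subgraph $CX - V$ is exactly the complement graph $Y_F$, which is uniformly hyperbolic by Lemma 3.10 with constant depending only on $n$ and $\delta$. Finally I would set $L_V := CF$, which is $\delta$-hyperbolic and $(\delta,\delta)$-quasi-isometrically embedded in $Y_F$ by the hyperbolicity condition, and observe that the coarse nearest-point projection in the statement of Proposition 3.21 coincides with $p_F$. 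The conclusion of Proposition 3.21 is then exactly the contrapositive of the lemma.

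The only nontrivial hypothesis left to verify is the link containment $CF \subseteq N_2(lk(v))$, with the link taken in $CX$, for every $v \in PF$. I would handle this by a case analysis on the three pieces of $PF = F^\perp \cup (B_{F^\perp} \cup \{b_{F^\perp}\}) \cup \{b_{F'} \mid F \subseteq F'\}$. If $v \in F^\perp$, then $F \subseteq lk(v)$ by definition of $F^\perp$, and each projection vertex $b_{F''} \in CF$ is $CX$-adjacent to some vertex of $F'' \subseteq F \subseteq lk(v)$, so $CF \subseteq N_1(lk(v))$. If $v = b_H$ with $H \subseteq F^\perp$ (covering both $B_{F^\perp}$ and $b_{F^\perp}$), then $lk(v) = H$ in $CX$, and orthogonality forces every vertex of $F$ to be $CX$-adjacent to every vertex of $H$, so $F \subseteq N_1(H)$ and hence $CF \subseteq N_2(H)$. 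If $v = b_{F'}$ with $F \subseteq F'$, then $lk(v) = F' \supseteq F$ and the same reasoning gives $CF \subseteq N_1(lk(v))$.

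The main obstacle, as anticipated, is really just this case analysis, together with absorbing the various hyperbolicity and quasi-isometric embedding constants of $CX$, $Y_F$, and $CF \hookrightarrow Y_F$ into a single constant depending only on $n$ and $\delta$. Once these verifications are in place, Proposition 3.21 produces the desired $C = C(n,\delta)$: any $CX$-geodesic $\gamma$ from $x$ to $y$ disjoint from $PF$ satisfies $d_{CF}(p_F(x), p_F(y)) < C$, and contrapositively, $d_{CF}(p_F(x), p_F(y)) \geq C$ forces every such geodesic to intersect $PF$.
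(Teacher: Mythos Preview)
Your proposal is correct and follows essentially the same route as the paper: both apply Proposition~3.21 (Strong BGI) with $CX$, $PF$, $CF$, $Y_F$, $p_F$ in the roles of $X$, $V$, $L_V$, $X_V$, $\pi$, and both verify the link containment $CF\subseteq N_2(lk(v))$ by the same case analysis on the three pieces of $PF$. The only minor point you do not state explicitly is that $PF\neq\emptyset$ (which the paper notes via $b_F\in PF$), but this is immediate.
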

\begin{proof}
	 Put $CX, PF, CF, Y_{F}, p_{F}$ in place of $X, V, L_{V}, X_{V}, \pi $ in the Strong BGI respectively. $CX$ and $CF$ are $\delta$-hyperbolic by the hyperbolicity condition of CHHF. $PF$ is non-empty since $F\sqsubsetneq X$ implies $b_{F}\in PF$. Also for each $v\in PF$, either (1) $v\in F^{\perp}$ or $v=b_{H}$ with $F\sqsubseteq H$, in which case $CF\subset N_{1}(F)\subset N_{1}(lk(v))$ or (2) $v=b_{H}$ with $F\perp H$, so $CF\subset N_{1}(F)\subset N_{2}(H)=N_{2}(lk(v))$. $Y_{F}$ is $\delta'(n, \delta)$-hyperbolic by 3.10 and $CF$ is $(\delta,\delta)$-quasi-isometrically embedded in $Y_{F}$ by hyperbolicity condition. We can take $\delta''=\delta''(n,\delta)$ large enough such that $\delta''$ works for all hyperbolicity constants. By Strong BGI, there exists $C=C(n,\delta'')$ such that if  $d_{CF}(p_{F}(x),p_{F}(y))\geq C$, then $\gamma\cap PF \neq \emptyset$ as desired.
	\par 
	
\end{proof}
	
 \par

 \subsection{Hierarchically Hyperbolicity of CHHF}
 Now we state our main theorem.
  \begin{theorem}
  	Let $(X,W,\mathfrak{F})$ be a $(n,\delta)$-CHHF triple and let $(W,\mathfrak{F})$ be the corresponding $E(n,\delta)$-proto-hierarchy structure. Then $(W,\mathfrak{F})$ is $E'(n,\delta)$-hierarchically hyperbolic space.
  \end{theorem}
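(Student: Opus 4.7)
The plan is to verify each remaining HHS axiom for the proto-hierarchy structure on $(W,\mathfrak{F})$ produced by Lemma 3.18, and to do so by induction on the complexity $n$ of the graph factor system. The base case where $\mathfrak{F} = \{X\}$ reduces to hyperbolicity of $CX$, which was already noted after Definition 3.7. For the inductive step, the hyperbolicity axiom is immediate from part (1) of the $\delta$-hyperbolicity condition, and finite complexity follows directly from the chain bound $n$ built into Definition 3.4. The containers axiom is handled by taking the container of $U$ in $T$ to be $T \cap U^{\perp}$: this is a domain by the intersection and link-closure clauses of the graph factor system definition, and any $V \sqsubseteq T$ with $V \perp U$ lies in $U^{\perp}$ by definition, hence in $T \cap U^{\perp}$.

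The bounded geodesic image axiom is essentially packaged by Lemma 3.22, once it is translated from geodesics in $CX$ to distances in the target $CU$: given $V \sqsubsetneq U$, I would first apply Lemma 3.9 to pass into the induced CHHF $(U, W^U, \mathfrak{F}_U)$, then use Lemmas 3.20 and 3.21 to identify the projections $p^{U}_{V}$ and $\pi^{U}_{V}$ with their global counterparts up to uniform constants, and finally apply Lemma 3.22 inside $(U, W^U, \mathfrak{F}_U)$. Consistency for transverse $U \pitchfork V$ then follows from BGI applied simultaneously to both domains: if both $d_V(\pi_V(x), \rho^U_V)$ and $d_U(\pi_U(x), \rho^V_U)$ were large, a $CX$-geodesic realising the relevant coordinates would be forced to enter both $PU$ and $PV$, contradicting the geometry of the projection parts; the nested consistency clause is a direct consequence of BGI.

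Large links is proved using hyperbolicity of $CU$ together with BGI: the sub-domains $T \sqsubsetneq U$ with $d_T(\pi_T(x), \pi_T(y)) > E$ each contribute a distinct relative projection $\rho^{T}_{U}$ that a $CU$-geodesic from $\pi_U(x)$ to $\pi_U(y)$ must pass near, so at most $\sim d_U(\pi_U(x), \pi_U(y))$ such $T$ can be maximal. Partial realization is built directly from the hierarchy condition: given pairwise orthogonal $\{U_j\}$ with $p_j \in CU_j$, iteratively choose a maximal clique $w_1 \subset U_1$ near $p_1$, use it as the anchor clique for the induced map $W^{U_2} \to W$ from Definition 3.7 to get a compatible $w_2$, and so on; the resulting maximal clique of $X$ has the required coarse projections by Lemma 3.21, and its large projection to any $V$ with $U_j \sqsubsetneq V$ or $U_j \pitchfork V$ follows because $w$ contains vertices of $U_j$, forcing $\pi_V(w)$ to lie near $\rho^{U_j}_{V}$.

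The main obstacle is the uniqueness axiom. My plan is to deduce it from the realisation theorem applied to the lower-complexity CHHFs $(F, W^F, \mathfrak{F}_F)$ via a bootstrapping argument. Suppose $x,y \in W$ have $d_U(\pi_U(x), \pi_U(y)) < \kappa$ for all $U \in \mathfrak{F}$. Then $\pi_X(x)$ and $\pi_X(y)$ are close in $CX$, so they lie in a bounded region; by hyperbolicity of $CX$ together with large links, the coordinate tuples of $x$ and $y$ eventually differ only in coordinates indexed by domains $F \sqsubsetneq X$. Using the hierarchy condition to lift both $x$ and $y$ into $W^F$ for some bounded collection of $F$'s, the inductive HHS structure on $(W^F, \mathfrak{F}_F)$, combined with its realisation theorem, bounds $d_{W^F}(x,y)$, which then transfers back to a bound on $d_W(x,y)$ via the induced map $W^F \to W$. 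The delicate step is controlling how the $W^F$-distance pushes forward under the non-canonical anchor cliques used in the hierarchy condition; this is where care with uniform constants, depending only on $n$ and $\delta$, will be needed.
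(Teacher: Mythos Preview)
Your outline matches the paper on the routine items (induction on complexity, hyperbolicity, finite complexity, containers via $T\cap U^{\perp}$), and your approach to consistency via Lemma~3.22 is essentially the paper's. But there are genuine gaps in three places.

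\textbf{Partial realisation.} Your iterative construction is unnecessary and fragile. The paper's argument is one line: pick vertices $p_j\in F_j$; pairwise orthogonality makes $\{p_j\}$ a clique in $X$, so any maximal clique $w\supseteq\{p_1,\dots,p_k\}$ works. No induction, no hierarchy condition needed.

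\textbf{Large links.} Your claim that maximal ``bad'' domains $T$ contribute \emph{distinct} relative projections $\rho^{T}_{U}$ along the $CU$-geodesic is unjustified: two $\sqsubseteq$-incomparable $T,T'$ can have $\rho^{T}_{U}$ and $\rho^{T'}_{U}$ coarsely equal, so BGI alone does not bound the number of maximal bad domains. The paper instead assigns to each \emph{vertex} $x_i$ of the $CX$-geodesic a domain $F_i$ (namely $lk(x_i)\cap X$ if $x_i\in X$, or $F''$ if $x_i=b_{F''}$) and shows via Lemma~3.22 that any bad $T$ nests into one of these $F_i$. For non-maximal $U$ the paper then uses induction plus realisation in $(W^{U},\mathfrak{F}_{U})$, which your sketch omits.

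\textbf{Uniqueness.} Your plan to ``lift both $x$ and $y$ into $W^{F}$'' is not well-defined: $x,y$ are maximal cliques of $X$, not of $F$, and there is no canonical preimage under the map $W^{F}\to W$. The paper's argument is an induction on $k=d_{CX}(w_1,w_2)$, not on complexity alone. At each step one walks one edge along a $CX$-geodesic $\gamma$ from $w_1$ to $w_2$, uses realisation in some $(W^{F'},\mathfrak{F}_{F'})$ (with $F'$ determined by the second vertex of $\gamma$) to manufacture an \emph{intermediate} maximal clique $\tau\in W$ sharing a vertex with $w_1$, and then checks via Lemma~3.22 that $\tau$ and $w_2$ still have $\kappa'$-close coordinates in every domain while $d_{CX}(\tau,w_2)<k$. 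This case analysis (depending on whether the second vertex of $\gamma$ lies in $X$, is a projection vertex, is adjacent to $x$ in $X$, etc.) is the real content of the proof and is absent from your sketch. The ``bounded collection of $F$'s'' you allude to does not exist; instead one produces a chain of intermediate cliques whose length is controlled by $k$, and $k\leq\kappa$ by hypothesis.
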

\begin{proof}
  We will use the induction arguement on the complexity $n$ in the proof various items to guarantee that for each domain $F\in \mathfrak{F}-\{ X \}$, the corresponding CHHF $(F, W^{F}, \mathfrak{F}_{F})$ induces the HHS structure on $(W^{F}, \mathfrak{F}_{F})$. First we check the base case $n=1$, that is $\mathfrak{F}=\{X\}$. Since $\mathfrak{F}$ contains all the link graphs of vertices of $X$, so $\mathfrak{F}=\{ X\}$ implies that no vertex of $X$ has non-empty link graph, that is $X$ is a discrete set. Also each vertex of $X$ is itself a maximal clique subgraph of $X$ and no additional projection vertices are added, so $W=CX$ which is $\delta$-hyperbolic by hyperbolicity condition and hence $W$ is an HHS. \par 
  Now consider the proto-hierarchy structure $(W,\mathfrak{F})$ constructed in the lemma 3.17. We verify the HHS axioms for $(W,\mathfrak{F})$ with constants depending only on $n$ and $\delta$ (we mostly just check that the constants can be chosen uniformly): 
\begin{enumerate}
	\item (Hyperbolicity) Hyperbolicity is direct from the hyperbolicity condition of CHHF.
    \item (Container)
     To check the container condition, suppose that $F'\sqsubsetneq F$ and
    $ \{ H\in\mathfrak{F}_{F} \mid H\perp F'  \} \neq \emptyset$. We claim that $(F')^{\perp} \cap F$, which is a domain properly nested in $F$, is the container of $F'$. Whenever $H\in\mathfrak{F}$ such that $H\sqsubsetneq F$ and $H\perp F'$, we have $H\subset (F')^{\perp}$ and $H\subset F$, hence $H\subset (F')^{\perp}\cap F$ as desired.
    
    \item (Finite Complexity) The finite complexity on the domains is direct from the definition of the graph factor system and our definition of nesting.

     \item (Consistency) 
     Suppose $F\pitchfork F'$. Let $w\in W$ be a vertex in $W$, considered as a maximal clique of $X$. Suppose $d_{CF}(\pi_{F}(w), \rho^{F'}_{F} ) > C$, where $C$ is the constant in lemma 3.22. By lemma 3.22, the geodesic $\gamma$ in $CX$ joining the vertex in $w\cap Y_{F}$ and $PF'\cap Y_{F}$ must intersect $PF$. Let $y$ be the point on $\gamma\cap PF$ closest to $w$. Then the subgeodesic joining the vertex in $w$ and $y$ does not intersect $PF'$, so using lemma 3.22 again, we have $d_{CF'}(\pi_{F'}(w), \rho^{F}_{F'} ) <C$.
     \par 
     Now consider the consistency of nesting pairs $F\sqsubsetneq F'$. Let $w\in W$ and consider the geodesic between $w\cap Y_{F}$ and $\pi_{F'}(w)=p_{F'}(w\cap Y_{F})$ in $Y_{F'}$, say $\gamma\subset Y_{F'}$. We have two possibilities: First suppose that $\gamma \cap PF=\emptyset$. Since $F\sqsubsetneq F'$ implies $Y_{F}\subsetneq Y_{F'}$, $\gamma$ is a geodesic of $Y_{F'}$ that is entirely contained in the induced subgraph $Y_{F}$ and hence is a geodesic of $Y_{F}$. We apply the strong BGI to $\gamma$: $Y_{F}$ is hyperbolic and is an induced subgraph of $Y_{F'}$ obtained by deleting $PF\cap Y_{F'}$, $CF$ is hyperbolic, quasi-isometrically embedded subgraph of $Y_{F}$, and $CF\subset N_{2}(lk(x))$ for any $x\in PF\cap Y_{F'}$, $p_{F} : Y_{F}\rightarrow CF$ is the nearest point projection. Hence by Proposition 3.21, the assumption that $\gamma \cap (PF\cap Y_{F'})=\emptyset$ implies
     \begin{center}
     	$d_{CF}(p_{F}(w\cap Y_{F}),p_{F}(p_{F'}(w\cap Y_{F})))=d_{CF}(\pi_{F}(w),\rho^{F'}_{F}(\pi_{F'}(w))<C$.
     	
     \end{center} 
     Since both images have uniformly bounded diameter, we have uniform bound on $diam_{F}(\pi_{F}(w)\cup \rho^{F'}_{F}(\pi_{F'}(w)))$
     as desired. \par 
     Next, suppose that $\gamma$ intersects $(PF\cap Y_{F'}) \neq \emptyset$ at some point, say $v\in PF\cap Y_{F'}$. Then $CF\subset lk(v)$ and for any $u\in CF$, $d_{Y_{F'}}(w\cap Y_{F},u)\leq d_{Y_{F'}}(w\cap Y_{F},v)+1$. On the other hand, by the definition of $p_{F'}$, $d_{Y_{F'}}(w\cap Y_{F},u)\geq d_{Y_{F'}}(w\cap Y_{F}, p_{F'}(w\cap Y_{F}))-1$. Hence $d_{Y_{F'}}(w\cap Y_{F},\pi_{F'}(w))-d_{Y_{F'}}(w\cap Y_{F},v) \leq 2$. Since $v, w\cap Y_{F}, \pi_{F'}(w)$ are the points on the geodesic $\gamma$, $v$ and $p_{F'}(w\cap Y_{F})$ have distance at most 2. Since $v\in PF\cap Y_{F'}$, $p_{F'}(v)\subset \rho^{F}_{F'}$ and the projection map is uniform Lipschitz map, the distance between the projections $\rho^{F}_{F'}$ and $\pi_{F'}(w)$ is also uniformly bounded. This conclude the consistency for nesting.
     \par      
     Finally suppose that $F\sqsubseteq F'$ and $H\in\mathfrak{F}$ such that $F'\sqsubsetneq H$ or $F'\pitchfork H$ and $H \notperp F $.
     We claim that $d_{CH}(\rho^{F}_{H},\rho^{F'}_{H})$ is uniformly bounded by constant depending only on $n$ and $\delta$. We already obserbed that $F\sqsubsetneq F'$ implies $PF'\subset PF$. So we have $p_{H}(PF'\cap Y_{H})\subseteq p_{H}(PF\cap Y_{H})$. Since we defined $\rho^{F'}_{H}=p_{H}(PF'\cap Y_{H}),\ \rho^{F}_{H}=p_{H}(PF\cap Y_{H})$ in all possibilites, we are done.

     \item (Bounded Geomdesic Image)
     Suppose $F\sqsubsetneq F'$. Let $x,y\in W$ and $\gamma$ be a geodesic between $\pi_{F'}(x)$ and $\pi_{F'}(y)$ in $CF'$. \par 
     First we claim that if $\gamma$ lies sufficiently far from the set $\rho^{F}_{F'}$, then $\gamma$ is contained in $Y_{F}$. Since $\gamma\subset CF'$, we only need to check that $\gamma \cap (PF\cap CF')=\emptyset$. Note that $\rho^{F}_{F'}=p_{F'}(PF\cap Y_{F'})$. Since $F\sqsubsetneq F'$, we have $b_{F}\in CF'\cap PF$ and $p_{F'}(b_{F})=b_{F}\in\rho^{F}_{F'}$. Since both $\rho^{F}_{F'}$ and $PF\cap CF'$ have uniformly bounded diameters and have a point in common, their union also has a uniform bound on its diameter, say $E'$. If $\gamma$ is $E'$-far from $\rho^{F}_{F'}$, then $\gamma\cap (CF'\cap PF)=\emptyset$ as desired. 
     \par 
     Now we can regard $\gamma$ as a geodesic in $CF'\cap Y_{F}=Y^{F'}_{F}$. Also note that $\gamma$ lies far from the augmented subgraph $CF$ since $CF$ is contained in the 2-neighborhood of $b_{F}\in \rho^{F}_{F'}$. The induced CHHF structure of the domain $F'$ implies that $CF$ is a quasi-convex subspace of the hyperbolic space $Y^{F'}_{F}$. Hence, provided $E$ is chosen sufficiently large in terms of the hyperbolicity constants, the projection of the endpoints of $\gamma$ to $CF$ are $E$-close to each other. By the consistency for nesting, the projections of the endpoints $p_{F}(\pi_{F'}(x)), p_{F}(\pi_{F'}(y))$ are uniformly coarsely coincide with $\pi_{F}(x),\pi_{F}(y)$ respectively. By modifying the uniform constant $E$ with the consistency constant, we are done.

      \item (Partial Realization)
     Let $F_{1},...,F_{k}\in\mathfrak{F}$ be pairwise-orthogonal domains. For each $i$, let $p_{i}\in \pi_{i}(X)\cap F_{i}$. Since $p_{i}\in F_{i}$ and the domains are pairwise-orthogonal, $p_{i}\in F^{\perp}_{j}$ for each $j \neq i$, so $p_{i}$ and $p_{j}$ are adjacent for each $i \neq j$. Hence we can take a maximal clique $w$ of $X$ containing $p_{1},...,p_{k}$. Since $w$ is a maximal clique of $X$, $w$ is a vertex of $W$. For each $i$, we have $w\cap CF_{i}$ contains $p_{i}$, so $p_{i}\in \pi_{F_{i}}(w)$. \par 
     Now suppose $F\in\mathfrak{F}$ satisfies $F\pitchfork F_{i}$ or $F_{i}\sqsubsetneq F$ for some $i$. If $F_{i}\sqsubsetneq F$, then $F_{i}\subsetneq F$, so $p_{i}\in F_{i}\subset Y_{F}$. Hence $p_{F}(p_{i})\subset \pi_{F}(w)$. On the other hand, $b_{F_{i}}$ lies in $CF$ and hence in $Y_{F}$, which is adjacent to $p_{i}$. Also $b_{F_{i}}\in PF_{i}$. Hence their projections via $p_{F}$ are uniformly close, which implies that $\pi_{F}(w)$ and $\rho^{F_{i}}_{F}$ are uniformly close in $CF$.\par 
     When $F\pitchfork F_{i}$, again we have $b_{F_{i}}\in Y_{F}$ and $b_{F_{i}}$ is at most distance 4 from $w$ via any vertex of $F_{i}$ and $F_{j\neq i}$, hence their images under $p_{F}$, $\pi_{F}(w)$ and $\rho^{F_{i}}_{F}$ are uniformly close in $CF$ as desired.

     \item (Large Link)
     Let $F\in\mathfrak{F}$ and $w_{1},w_{2}\in W$ be maximal cliques in $X$. First consider the case $F=X$. We need to find $F_{i}$, $i\leq N$ with $N$ bounded linearly in $d_{CX}(w_{1},w_{2})$ such that whenever $F'\in\mathfrak{F}$ satisfies $d_{CF'}(w_{1},w_{2})>C$, $F'\sqsubseteq F_{i}$ for some $i$. \par 
     Let $\gamma$ be a geodesic in $CX$ joining the closest points in $w_{i}$.  For each vertex $x_{i}\in\gamma$, associate a domain $F_{i}$ by $F_{i}=lk(x)\cap X$ if $x\in X$ and $F_{i}=F''$ if $x=b_{F''}$. (so we take $N=d_{CX}(w_{1},w_{2}.)$ By Lemma 3.22, $d_{CF'}(w_{1},w_{2})>C$ only when the geodesic $\gamma$ intersects $PF'$. If $\gamma$ intersects $PF'$ at some point, say $x_{i}$,  then we have two possibilities: \par 
     Case 1. $x_{i}\in (F')^{\perp}$ or $x_{i}=b_{H}$ and $F'\sqsubseteq H$. In this case, since $F'\subseteq lk(x_{i})$ in both case and $F_{i}$ is the domain $lk(x_{i})\cap X$ in all possibilities, so we have $F'\sqsubseteq F_{i}$. \par
     Case 2. $x_{i}=b_{H}$ and $F'\perp H$. In this case, note that each vertices adjacent to $b_{H}$ must be contained in $(F')^{\perp}$. Let $x_{j}$ be a vertex of $\gamma$ that is adjacent to $x_{i}$. Then $x_{j}\in (F')^{\perp}$ and hence by the case 1, $F'\sqsubseteq F_{j}$ as desired. 
     \par Now suppose $F\neq X$. By induction on complexity, we know that $(W^{F}, \mathfrak{F}_{F})$ is an HHS. Note that the coordinate tuples $(\pi_{H}(w_{i}))_{H\in\mathfrak{F}}$ are consistent tuples with uniform constants by the consistency axiom. Its subtuple $(\pi_{H}(w_{i}))_{H\in\mathfrak{F}_{F}}$ is then a consistent tuple of the HHS $(W^{F}, \mathfrak{F}_{F})$. The realisation theorem implies that there exists maximal cliques $w_{i}'$ of $F$ such that the projections of $w_{i}'$ to each $CH, \ H\in\mathfrak{F}_{F}$ are uniformly close to $\pi_{H}(w_{i})$ respectively. Since $(W^{F}, \mathfrak{F}_{F})$ is HHS (with HHS constants $E'$), Large Links for $w_{1}', w_{2}'$ about the nesting maximal domain $F\in \mathfrak{F}_{F}$ implies that there exists $N=E' d_{CF}(\pi_{F}(w_{1}'),\pi_{F}(w_{2}'))+E'$ and a collection of domains $\{T_{i}\}_{i=1,...,\lfloor N \rfloor}\subseteq \mathfrak{F}_{F}-\{F \}$ such that whenever $T\in \mathfrak{F}_{F}-\{F\}$ with $d_{CT}(\pi_{T}(w_{1}'),\pi_{T}(w_{2}'))>E'$, there exists some $i$ such that $T\sqsubseteq T_{i}$. Since $\pi_{T}(w_{i}')$ are uniformly coarsely coincide with $\pi_{T}(w_{i})$ for every domain $T\in\mathfrak{F}_{F}$, by increasing the constant $E$ if necessary, we obtain Large Links for $w_{1},w_{2}$.
     
     \item (Uniqueness)
     First note that by induction on complexity, we can assume that for each non-maximal domain $F\in \mathfrak{F}- \{X\}$, the induced  $(W^{F}, \mathfrak{F}_{F})$ admits an HHS structure. \par 
     Let $w_{1},w_{2}\in W$ be two maximal cliques such that for every $F\in\mathfrak{F}$, $d_{CF}(w_{1},w_{2})<\kappa$. We need to find $\theta$  depending only on $\kappa, \delta, n$ such that $d_{W}(w_{1},w_{2})<\theta$. We will show this by using induction on $d_{CX}(w_{1},w_{2})=k$. \par 
     If $d_{CX}(w_{1},w_{2})=0$, then $w_{1}\cap w_{2}$ is non-empty. Let $x\in w_{1}\cap w_{2}$. Since both $w_{1}$, $w_{2}$ are cliques, $w_{i}'=w_{i}-{x}$ are cliques in $lk(x)$. Furthermore, $w_{i}'$ are maximals in $lk(x)$ and $x$ is a maximal clique in $(lk(x))^{\perp}$ containing $x$. 
      By the hierarchy condition, there is a map $x : W^{lk(x)} \rightarrow W$ that sends each maximal clique subgraph $w'$ of $lk(x)$ to some maximal clique subgraph $w$ of $X$ such that $w'\ast x \subseteq w$. Since $x(w_{i}')$ contains $w_{i}'\ast x=w_{i}$ and $w_{i}$ are maximal cliques, so $x(w_{i}')=w_{i}$.
     For each $F\sqsubset lk(x)$, $w_{i}\cap Y_{F}=w_{i}'\cap Y_{F}$ so $d_{CF}(w_{1}',w_{2}')=d_{CF}(w_{1},w_{2})<\kappa$. By uniqueness on the HHS $(W^{lk(x)},\mathfrak{F}_{lk(x)})$, we have $d_{W^{lk(x)}}(w_{1}',w_{2}')<\theta'$ where $\theta'$ depends on $n,\delta,\kappa$ only. Since $x :  W^{lk(x)} \rightarrow W$ is an induced map and  $x(w_{i}')=w_{i}$, we have $d_{W}(w_{1},w_{2})<\theta'$ as desired.
     \par 
     Now suppose  $d_{CX}(w_{1},w_{2})=k>0$. Let $\gamma\subset CX$ be the shortest geodesic from $w_{1}$ to $w_{2}$. Denote the end points of $\gamma$ by $x\in w_{1}$ and $y\in w_{2}$. Also denote the second vertex of $\gamma$ by $t$. We will find some maximal clique subgraph in $W$ which satisfies the assumption of the uniqueness axiom and the distance from $w_{i}$ is less than $k$. The induction on $k$ then induces the desired bound on $d_{W}(w_{1},w_{2})$.
     \par 
     Case 1. There exists some domain $F\sqsubseteq lk(x)$ such that $PF$ intersects $\gamma$ other than $x$ and $t$. In this case, let $u$ be the closest point to $y$ such that $u\in PF\cap \gamma$ for some $F\sqsubseteq lk(x)$ (so we also have $u\neq t$). First note that $u\neq b_{H}$ with $H\perp F$. To show this, suppose that $u=b_{H}$ and $H\perp F$. Then $u$ is an additional vertex so $u\neq y$ and hence $\gamma$ contains some vertex $v$ that is adjacent to $u$ and lies closer to $y$. Since $u\neq t$, $v$ have distance at least 3 from $x$. By the construction of adjacency with the projection vertices, however, $v$ must be a vertex in $F^{\perp}$ so any vertex in $F$ are adjacent to $v$ and $x$ simultaniously. Hence the distance between $x$ and $v$ are at most 2, which contradicts to the assumption that $\gamma$ is a geodesic.\par

     Now we can assume that $F\subset (lk(u)\cap lk(x))\cap X \neq \emptyset$. Denote the domain $lk(x)\cap lk(u)$ by $F'\in\mathfrak{F}$. By induction hypothesis, $(W^{F'},\mathfrak{F}_{F'})$ is an HHS. Consider the consistent tuple $\{ \pi_{H}(w_{2}) \}_{H\in\mathfrak{F}_{F'}}$. Applying the realization theorem to the tuple, we obtain some maximal clique subgraphs $w'$ of $F'$ such that for every $H\in\mathfrak{F}_{F'}$, the coordinates $\pi_{H}(w')$ and $\pi_{H}(w_{2})$ are uniformly coarsely coincide. We take some maximal clique subgraph in $lk(w')\cap lk(x)$ (which will be denoted by $F_{w'}$ for simplicity.) in the similar way.  If $F_{w'}$ is a non-empty domain, then consider the consistent tuple $\{ \pi_{H}(w_{2}) \}_{H\in\mathfrak{F}_{F_{w'}}}$ and take its realization point $\tau'$ in the induced HHS space $W^{F_{w'}}$. Since $w'\subset lk(x)$, $w'\cup x$ spans a clique subgraph in $(F_{w'})^{\perp}$. Let $z$ be some maximal clique subgraph in $F_{w'}^{\perp}$ that contains $w'\cup x$. By the hierarchy condition and lemma 3.20, $z(\tau')$( which will be denoted by $\tau$) is the maximal clique of $W$ whose coordinates are uniformly close to $w_{2}$ over the domains $H\in\mathfrak{F}_{F'}$ or $H\in\mathfrak{F}_{F_{w'}}$. If $F_{w'}=\emptyset$, then take $\tau'$ to be any maximal clique subgraph in $(F')^{{\perp}}$ containing $x$. Again the hierarchy condition produces the maximal clique subgraph $\tau=\tau'(w')\subset W$ whose coordinates are uniformly coarsely coincide over the domains $H\sqsubseteq F'$.
     
      \par

      We claim that the projections of the maximal cliques $w_{2}$, $\tau$ are uniformly coarsely coincide in entire domains $H\in\mathfrak{F}$.\par 
     If a domain $H$ is not nested in $lk(x)$, then $x\in Y_{H}$. Since  $w_{1}$ and $\tau$ both contains $x$, the coordinates $\pi_{H}(w_{1})$ and $\pi_{H}(\tau)$ shares a non-empty subset $p_{H}(x)$ in common. This shows that $\pi_{H}(w_{1})$ (in particular, $\pi_{H}(w_{2}))$) is uniformly coarsely coincide with $\pi_{H}(\tau)$ as desired.  \par 
     
     If a domain $H$ is nested in $lk(x)$ but not nested in $lk(u)$, then by our choice of $u$, the subgeodesic of $\gamma$ starting from $u$ does not intersect $PH$. Now we have two possibilities: Either $\tau\cap Y_{H}$ contains a vertex of $w'$ or $w'\subset H^{\perp}$. If $\tau\cap Y_{H}$ contains a vertex $v\in w'$, then $v$ and $u$ are adjacent and the concatenation of the edge $[v,u]$ and the subgeodesic of $\gamma$ from $u$ to $y$ forms a geodesic between $v$ and $y$. Also such geodesic cannot intersect $PH$. Hence by Lemma 3.22, the projections $p_{H}(v)$ and $p_{H}(y)$ are uniformly close and in particular, $\pi_{H}(\tau)$ and $\pi_{H}(w_{2})$ are uniformly coarsely coincide. If $w'\subset H^{\perp}$, then $H\subset lk(w')\cap lk(x)$ and by our construction, $\pi_{H}(\tau)$ is uniformly coarsely coincide with $\pi_{H}(w_{2})$.
     
     We checked that the maximal clique $\tau\in W$ satisfies the assumption of the uniqueness condition with $w_{i}, i=1,2$. Since $d_{CX}(\tau,w_{2})\leq d_{CX}(u,y)+1<k$ and $d_{CX}(\tau,w_{1})=0$, the induction arguement on $k$ implies that $\tau$ and $w_{2}$($w_{1}$) are uniformly close in $W$. In particular, $w_{1}$ and $w_{2}$ are uniformly close. 
     \par 
     Case 2. $t\in X$ and for each $F\sqsubseteq lk(x)$, $PF\cap \gamma \subseteq \{ x,t \}$. 
     \par Subcase 1. $t$ is adjacent to $x$ in $X$. \par 
     In this case, construct a maximal clique of $X$ as follows. If $x\ast t$ is not a maximal clique subgraph of $X$, then the induction arguement tells that $W^{lk(t\ast x)}$ is an HHS. Again we consider the consistent tuple $\{ \pi_{H}(w_{2}) \}_{H\in\mathfrak{F}_{lk(t\ast x)}}$ and its realization point $w'\subseteq W^{lk(t\ast x)}$. Take a maximal clique subgraph in $lk(t\ast x)^{\perp}$ containing $t\ast x$ (which is just $t\ast x$ itself) and consider the associated maximal clique graph $(t\ast x)(w')=w$. If $x\ast t$ is a maximal, then simply take $w=x\ast t$. Similar to the case 1, each domain $H\in\mathfrak{F}$ satisfies either $H\nsqsubseteq lk(x)$, in which case $w$ and $w_{1}$ shares $x\in Y_{H}$ so $w$ and $w_{i}$ are uniformly close in $CH$, or $H\sqsubseteq lk(x)$ but $H\nsqsubseteq lk(t)$, in which case the subgeodesic of $\gamma$ starting from $t$ does not intersect $PH$ and hence $w$ and $w_{i}$ are uniformly close by lemma 3.22, or $H\sqsubseteq lk(t\ast x)$ and hence the construction of $w$ directly implies $w$ and $w_{i}$ are uniformly close in $CH$. Again we find the maximal clique subgraph that satisfies the assumption of uniqueness condition with $w_{i}$ and $d_{CX}(w,w_{i})<k$, so the induction argument implies $w_{1}$ and $w_{2}$ are uniformly close in $W$ via $w$.
     \par Subcase 2. $lk(x)\cap lk(t)\neq \emptyset$ and $t$ is not adjacent to $x$ in $X$.
     \par We run the similar process: Find the maximal clique subgraph $w'$ of $lk(x)\cap lk(t)\in\mathfrak{F}$ whose coordinates are uniformly close to $w_{i}$ in various $H\in\mathfrak{F}_{lk(x)\cap lk(t)}$. Then take the maximal clique $\tau'$ (possibly empty) in $lk(w')\cap lk(x)$ whose coordinates are again coarsely coincide with $w_{2}$ over the domains nested in $lk(x)\cap lk(w')$. Take some maximal clique graph $\tau$ contained in $lk(w')^{\perp}$ that contains $\tau'\ast x$ and using the  hierarchy condition, take the maximal clique graph $w=\tau(w')\in W$. To check the uniqueness assumption between $w$ and $w_{i}$, consider the following cases. If $H\sqsubseteq lk(x)\cap lk(t)$, then by our construction of $w$, we are done. If $H\nsqsubseteq lk(x)$, then $w$ and $w_{1}$ shares $x\in Y_{H}$ and hence their projections are coarsely coincide. If $H\sqsubseteq lk(x)$ but $H\nsqsubseteq lk(t)$, then either $w'\cap Y_{H}=\emptyset$, in which case $H\sqsubseteq lk(x)\cap lk(w')$ and our construction of $\tau'\subset w$ implies the desired bound on the distance between $w$ and $w_{i}$, or there exists some vertex $v\in w'\cap Y_{H}$. In the last situation, if the concatenation of the edge $[v,t]$ and the subgeodesic of $\gamma$ from $t$ to $y$ is a geodesic in $CX$, then by lemma 3.22, we are done. If such edge path is not a geodesic between $v$ and $y$, then since $v$ is adjacent to $x$ and $\gamma$ is a geodesic, the concatenation of the edge $[x,v]$ and the geodesic between $v$ and $y$ has the same length with $\gamma$ and in particular, geodesic of $CX$ between $x$ and $y$. If $PH$ intersects the geodesic between $v$ and $y$, then it is the case 1. If $PH$ does not intersects the geodesic between $v$ and $y$, then again by lemma 3.22, the projections of $w$ and $w_{2}$ are uniformly close as desired. \par 
     Since $d_{CX}(w,w_{1})=0$, $w$ is uniformly close to $w_{1}$ in $W$. By construction, $w$ contains the points in $lk(t)$ which is adjacent to $t$ in $X$, so $w$ and $w_{2}$ satisfies the assumption of the subcase 1 above, hence $w$ (and hence $w_{1}$) and $w_{2}$ are uniformly close in $W$ as desired.
     \par 
     Subcase 3. $lk(x)\cap lk(t)=\emptyset$ and $t$ is not adjacent to $x$ in $X$.
     \par In this case, there exists $W$-adjacent maximal clique subgraphs $p, q\subseteq X$ such that $x\in p, t\in q$. Since $p$ and $q$ are $W$-adjacent, there projections are uniformly close in all domains. Also $p$ and $w_{1}$ satisfies the assumption of the uniqueness condition. For each domain $H\in\mathfrak{F}$, if $H\sqsubseteq lk(x)$, then since $lk(x)\cap lk(t)=\emptyset$, $PH$ does not intersect the subgeodesic of $\gamma$ starting from $t$ and hence by 3.22, $q$ and $w_{2}$ are uniformly close in $CH$ and so does for $p$ and $w_{1}$. If $H\nsqsubseteq lk(x)$, then $p$ and $w_{1}$ shares $x\in Y_{H}$ so $p$ and $w_{1}$ are uniformly close in $CH$. Since $d_{CX}(p,w_{1})=0$, $p$ and $w_{1}$ are uniformly close in $W$. Since $p$ and $q$ have coarsely coincide coordinates, $q$ and $w_{2}$ satisfies the assumptions of uniqueness condition and $d_{CX}(q, w_{2})<k$, so $q$ and $w_{2}$ are uniformly close in $W$. Since $p$ and $q$ are $W$-adjacent, $w_{1}$ and $w_{2}$ are uniformly close in $W$ as desired.
     \par Case 3. $t\notin X$
     \par Let $t=b_{F}$ and denote the vertex of $\gamma$ next to $t$ by $u$.
     \par Subcase 1. $u=y$. \par 
      First we will find some maximal clique subgraphs in $F$ whose coordinates over $H\in\mathfrak{F}_{F}$ are uniformly close to $w_{i}$ and contains $x$ and $y$ respectively. If $x$ is a maximal clique in $F$, then take $w_{1}'=x$. If $lk(x)\cap F\neq \emptyset$, then consider the consistent tuple $\{ \pi_{H}(w_{1}) \}_{H\in\mathfrak{F}_{lk(x)\cap F}}$ and its realisation point $z_{1}$ in $lk(x)\cap F$. Take some maximal clique subgraph $\tau \subset (lk(x)\cap F)^{\perp}\cap F$ containing $x$ and denote $\tau(z_{1})=w_{1}'$. Similarily take $w_{2}'$ by considering consistent tuple and its realisation in $lk(y)\cap F$ and extent to $W^{F}$ via the maximal clique of $(lk(y)\cap F)^{\perp}$ containing $y$. Then $w_{1}'$ and $w_{1}$ (respectively for $w_{2}'$ and $w_{2}$) have uniformly close coordinates over $H\in \mathfrak{F}_{F}$. To check this, consider any domain $H\in\mathfrak{F}_{F}$. If $H\sqsubseteq lk(x)$ ($H\sqsubseteq lk(y)$ respectively), then by our construction, the coordinates of $w_{1}'$ and $w_{1}$ (for $w_{2}'$ and $w_{2}$ respectively,) are uniformly close in $CH$. If $H\nsqsubseteq lk(x)$ ($H\nsqsubseteq lk(y)$ respectively), then $w_{1}'$ and $w_{1}$ shares a point $x\in Y_{H}$ ($y\in Y_{H}$ for $w_{2}'$ and $w_{2}$) and hence again $w_{1}'$ and $w_{1}$ are uniformly close in $CH$. \par 
      Since the coordinates of $w_{i}'$ are uniformly close to the coordinates of $w_{i}$, we have that $w_{i}'$ are the points of $W^{F}$ satisfying the assumption of the uniqueness condition. By induction hypothesis, $(W^{F},\mathfrak{F}_{F})$ is an HHS and by the uniqueness, $w_{1}'$ is uniformly close to $w_{2}'$ in $W^{F}$. By hierarchy condition of the CHHF, taking any maximal clique $\tau'\subset F^{\perp}$, there exists an induced map $\tau' : W^{F}\rightarrow W$. Since $\tau'$ is an induced map, $\tau'(w_{1}')=\tau_{1}$ and $\tau'(w_{2}')=\tau_{2}$ are uniformly close in $W$. We claim that $\tau_{i}$ are uniformly close in $W$ to $w_{i}$ respectively, which completes the subcase 1. \par 
     We first note that, since $x\in \tau_{1}$ ($y\in \tau_{2}$ respectively), we have $d_{CX}(\tau_{i},w_{i})=0$. Hence if the coordinates of $\tau_{i}$ and $w_{i}$ are uniformly close in every domains $H\in\mathfrak{F}$, then the induction arguement implies  $\tau_{i}$ and $w_{i}$ are uniformly close in $W$ as desired. Note that the coordinates of $w_{1}$ and $w_{2}$ are uniformly close by assumption, and the coordinates of $\tau_{1}$ and $\tau_{2}$ are uniformly close by the uniform lipschitz property of the projection maps. Hence we only need to check the statement for one pair. If there exists some domain $H$ which satisfies $H\sqsubseteq lk(x)$ and $H\sqsubseteq lk(y)$, then by taking $v\in H$ we can replace $\gamma$ by the geodesic  $x$-$v$-$y$ whose second vertex is an element of $X$, which was done in the case 1 and 2. So we can assume that for each domain $H$, either $H\nsqsubseteq lk(x)$ or $H\nsqsubseteq lk(y)$. If $H\nsqsubseteq lk(x)$, then $\tau_{1}$ and $w_{1}$ shares a point $x\in Y_{H}$, so $\tau_{1}$ and $w_{1}$ are uniformly close in $CH$. If $H\nsqsubseteq lk(y)$, then $\tau_{2}$ and $w_{2}$ are uniformly close in $CH$ in the same reason.
     \par 
     Subcase 2. $u\neq y$. In this case denote the vertex of $\gamma$ that appears after $u$ by $v$. \par 
     Again we will find a maximal clique graph containing $u$ and whose coordinates are uniformly close to $w_{i}$. First we find a   maximal clique in $W^{F}$ with desired coordinates. \par 
     If $lk(v)\cap F=\emptyset$, then consider $lk(u)\cap F$. If $lk(u)\cap F=\emptyset$, then $u$ is a maximal clique subgraph of $F$ and we simply take $z=u$. If $lk(u)\cap F\neq \emptyset$, then consider the consistent tuple $\{ \pi_{H}(w_{2}) \}_{H\in\mathfrak{F}_{lk(u)\cap F}}$ and its realisation point $z'\in W^{lk(u)\cap F}$, considered as a maximal subgraph of $lk(u)\cap F$. Take a maximal clique subgraph $\tau$ in $(lk(u)\cap F)^{\perp}\cap F$ that contains $u$ and using the hierarchy condition of the CHHF $(F,W^{F})$, take $\tau(z')=z$ which is a maximal clique subgraph of $F$. Also take $w_{1}'\in W^{F}$ constructed in the subcase 1, which is the maximal clique graph of $F$ that contains $x$ and have coordinates uniformly close to $w_{1}$. We claim that $w_{1}'$ and $z$ have uniformly coarsely coincide coordinates over $H\in\mathfrak{F}_{F}$. \par 
     Let $H\in\mathfrak{F}_{F}$. Note that $PH\cap \gamma \subseteq \{ x, t, u\}$. To see this, first note that since $\gamma$ is a geodesic and $H\in lk(t)$, the only possible case is $v\in PH$. If $v\in PH$, then either $H\sqsubseteq lk(v)$ and hence $\emptyset \neq H\subseteq F\cap lk(v)$ which contradicts to our assumption, or $PH\cap \gamma$ contains a vertex of $\gamma$ with distance at least 3 from $t$, which can be joined by an edge path of length 2 with $t$ via some vertex of $H$, contradicts to the assumption that $\gamma$ is a geodesic.\par  
     If $H\sqsubseteq lk(u)$, then $H\sqsubseteq lk(u)\cap F$. By the construction of $z$, the coordinates of $z$ in $CH$ is uniformly close to $\pi_{H}(w_{2})$. If $H\nsqsubseteq lk(u)$, then the subgeodesic of $\gamma$ from $u$ to $y$ misses $PH$, hence by lemma 3.22, the coordinates of $z$ and $w_{2}$ are uniformly close. \par 
     Since the coordinates of $w_{1}'$ are uniformly coarsely coincide with the coordinates of $w_{i}$ over the domains $H\in\mathfrak{F}_{F}$, $w_{1}'$ and $z$ have uniformly close coordinates as desired.  \par 
     In the case of $lk(v)\cap F\neq \emptyset$, we run the similar process to find some maximal cliques whose coordinates are uniformly close to $w_{1}'$ over the domains $H\in\mathfrak{F}_{F}$. Consider the consistent tuple $\{ \pi_{H}(w_{2}) \}_{H\in\mathfrak{F}_{lk(v)\cap F}}$ and its realisation point $z'\in W^{lk(v)\cap F}$. If $z'$ is a maximal clique graph in $F$, then take $z=z'$. If $lk(z')\cap F\neq \emptyset$, then again consider the consistent tuple $\{ \pi_{H}(w_{2}) \}_{H\in\mathfrak{F}_{lk(z')\cap F}}$ and its realisation point $z''\in W^{lk(z')\cap F}$. Take some maximal clique subgraph $\tau$ of $(lk(z')\cap F)^{\perp}\cap F$ that contains $z'$ and let $z=\tau(z'')$. We claim that $z$ is desired clique subgraph. \par 
     Let $H\in\mathfrak{F}_{F}$. Also note that $PH\cap \gamma\subseteq \{ x, t, u, v \}$. \par 
     If $H\sqsubseteq lk(v)$, then by the construction of $z'$, the coordinates of $z$ in $CH$ are uniformly close to $\pi_{H}(w_{2})$. If $H\nsqsubseteq lk(v)$, then either $H\nsqsubseteq lk(z')$ or $H\sqsubseteq lk(z')$. In the first case, for $u'\in z'\cap Y_{H}$, the geodesic from $u'$ to $y$ obtained by concatenating the subgeodesic of $\gamma$ from $v$ to $y$ and the edges between $u'$ and $v$ misses $PH$. Hence by lemma 3.22, the coordinates of $z$ and $w_{2}$ are uniformly close. In the second case, $H\sqsubseteq lk(z')\cap F$ and by our construction of $z''$, $\pi_{H}(z)$ is uniformly close to $\pi_{H}(w_{2})$ as desired. Also $z$ and $w_{1}'$ have the coarsely coincide coordinates. \par 
     We obtain $z,w_{1}'\in W^{F}$ whose coordinates are uniformly close over all domains $H\in\mathfrak{F}_{F}$, hence using the uniqueness axiom of the HHS $(W^{F},\mathfrak{F}_{F})$, $z$ and $w_{1}'$ are uniformly close in $W^{F}$. By the hierarchy condition, we can find some maximal clique subgraphs $\tau_{1},w\subset X$ that are uniformly close in $W$ and $w_{1}'\subset \tau_{1}, z\subset w$. Since $d_{W}(w_{1},\tau_{1})=0$ and $d_{W}(w,w_{2})<k$, it only remains to show that the coordinates of $w$ or $\tau_{1}$ are uniformly close to the coordinates of $w_{i}$ over every domains $H\in\mathfrak{F}$. \par 
     If $H \sqsubseteq F$, then we are done. If $H\nsqsubseteq F$, then either $H\nsqsubseteq lk(x)$, in which case $\tau_{1}$ and $w_{1}$ shares a vertex $x\in Y_{H}$ and hence $\tau_{1}$ is close to $w_{i}$ in $CH$, or $H\sqsubseteq lk(x)$ and hence the subgeodesic of $\gamma$ from $u\in w$ to $y$ does not intersect $PH$ so lemma 3.22 implies $w$ is uniformly close to $w_{i}$ in $CH$ as desired.  
     
\end{enumerate}
\end{proof}

 \section{Factor system of crossing graph}
 
 The advantage of the CHHF is that the CHHF machinery covers the most well-studied curve graph with HHS structure, the curve graph of finite type surfaces and the crossing graph of CAT(0) cube complex. Also the HHS structure induced from CHHF curve graph admits the clean container condition and the wedge condition, which was found to be requirement to recover the CHHS from the HHS, so we believe that the CHHF curve graph is useful to study the connection between the HHS and combinatorial HHS theory.
 As an application, we extend the factor system machinery, developed in \cite{BHS17} for the CAT(0) cube complex, to the quasi-median graph. 
 
 \subsection{Background on Quasi-median graph}
 
 In this section we shall review some basic notions and results for the quasi-median graphs. Most of the definitions and results in this section were introduced by Genevois in \cite{Gen17}. 
 \par 
 
 \begin{definition}
 	A $k$-$quasi$-$median$ of points $x_{1},x_{2},x_{3}$ of a graph $X$ is a triple $(y_{1},y_{2},y_{3})$ of vertices of $X$ satisfying that 
 	\begin{enumerate}
 		\item $y_{i}$ and $y_{j}$ lie on a geodesic between $x_{i}$ and $x_{j}$ for any $i\neq j$;
 		\item $k=d_{X}(y_{1},y_{2})=d_{X}(y_{1},y_{3})=d_{X}(y_{2},y_{3})$; and
 		\item $k$ is as small as possible subject to 1 and 2.
 	\end{enumerate}
 A triple  $(y_{1},y_{2},y_{3})$ is a $quasi$-$median$ of  $x_{1},x_{2},x_{3}$ if it is a $k$-quasi-median for some $k$. A $0$-quasi-median is called a $median$.
 \par 
 A $quasi$-$median$ $graph$ is a graph $X$ satisfying that
 \begin{enumerate}
 	\item every triple of $X$ has a unique quasi-median,
 	\item does not contain $K^{-}_4$ as induced subgraph,
 	\item each isometrically embedded cycle $C_{6}$ has a 3-cube as its convex hull in $X$.
 \end{enumerate}

 \end{definition}
If $X$ is a quasi-median graph and every triple has $0$-quasi-median, then $X$ is a median graph. Genevois showed that the quasi-median graph is indeed the generalization of a median graph, admitting various cubical-like structure (see \cite{Gen17} for more details). Most interestingly, the geometry of quasi-median graph can be completely reformulated using the combinatoric data of its hyperplanes.
\par 
\begin{definition}
	Let $X$ be a quasi-median graph. Let $\sim$ be the equivalence relation on the edge set of $X$ defined as follows; two edges $e,f$ of $X$ are related (we call such relation '$parallel$') if either $e$ and $f$ are two sides of a triangle or two opposite sides of an induced square. A $hyperplane$ $H$ is an equivalence class $[e]$ for some edge $e$ (we call $H$ is dual to $e$ in this case). \par 
	Let $H$ be a hyperplane dual to an edge $e$. The $carrier$ of $H$, denoted by $N(H)$, is an induced subgraph of $X$ spanned by $[e]$. We call each connected component of $N(H)\setminus J$ a $fibre$ of $H$ or the $combinatorial$ $hyperplane$ of $H$, where $J$ is the union of the interiors of all the edges in $[e]$. Each hyperplane separates $X$ into at least two components, which are called the $sectors$ delimited by the hyperplane $H$. If subcomplexes $Y_{1},Y_{2}\subset X$ are contained in two different sectors delimited by a hyperplane $H$, then we say $H$ $separates$ $Y_{1}$ and $Y_{2}$.   \par 
	We say two hyperplanes $H_{1},H_{2}$ $cross$ if $H_{i}$ are dual to the edges $e_{i}$ and there exist edges $e_{i}'\in [e_{i}]$ that form a corner of some induced square of $X$. If two hyperplanes have intersecting carrier but not cross, then we call such hyperplanes $osculate$.
\end{definition}

 Before considering the combinatoric and geometric properties of hyperplanes in $X$, we define a quasi-median generalization of the convex subcomplex, the  $gated \ subgraph$.

 \begin{definition}
 	Let $X$ be a graph and $Y\subset X$ a subgraph. Fixing a vertex $x\in X$, we say that a vertex $y\in Y$ is a $gate$ for $x$ if, for every $z\in Y$, there exists a geodesic between $x$ and $z$ passing through $y$. If any vertex of $X$ admits a gate in $Y$, then  $Y$ is called a $gated \ subgraph$ of $X$. Let $Y\subset X$ be a gated subgraph. The map $\mathfrak{g} : X \rightarrow Y$ that sends each vertex of $X$ to its gate in $Y$ is called the $projection$ onto $Y$. Let $S\subset V(X)$ be a set of vertices. The $gated$ $hull$ of $S$ is the smallest gated subgraph of $X$ containing $S$. We often call the gated hull of the projection image of a subgraph $Y_{1}\subset X$ into a gated subgraph $Y_{2}$ by the $gated$ $projection$ of $Y_{1}$ to $Y_{2}$.
 \end{definition}
 
 There are useful facts about gated subgraphs of quasi-median graphs and hyperplane combinatorics. (see section 2 of \cite{Gen17} for more details.)
 
 \begin{proposition}
 	Let $X$ be a quasi-median graph and $Y\subset X$ a gated subgraph. Then
 	\begin{enumerate}
 		\item Each gated subgraphs $Y$ is itself a quasi-median graph.
 		\item Every maximal clique subgraphs are gated. The cartesian product of maximal clique subgraphs are gated. (we call such subgraph the $prism$.)
 		\item For each hyperplane $H$ of $X$, the carrier of $H$, the combinatorial hyperplanes of $H$ and the sectors delimited by $H$ are gated subgraph of $X$.
 		\item For $x,y\in X$, the hyperplanes separating the gates of $x$ and $y$ in $Y$ are precisely the hyperplanes separating $x$ and $y$ and intersect $Y$. In particular, if $Y_{1},Y_{2}$ are gated subgraphs of $X$, then the hyperplanes intersecting $\mathfrak{g}(Y_{1})$ are precisely the hyperplanes intersecting both $Y_{1}$ and $Y_{2}$.
 		\item Let $S\subset X$. The hyperplanes of the gated hull $Y$ of $S$ are precisely the restrictions to $Y$ of the hyperplanes of $X$ separating two vertices of $S$. Also two hyperplanes of $Y$ cross(contact) if and only if their extensions cross(contact) in $X$.
 		\item $Y$ contains its triangles. That is, if $C$ is a clique subgraph of $X$ and contains an egde of $Y$, then $C\subset Y$.
        \item A path $\gamma\subset X$ is a geodesic if and only if $\gamma$ intersects any hyperplane at most once.
        \item The hyperplane carrier $N(H)$ of $H$ is isomorphic to the product $C\times J_{H}$ where $C$ is the maximal clique graph dual to $H$ and $J_{H}$ is a combinatorial hyperplane of $H$.
 	\end{enumerate}
 \end{proposition}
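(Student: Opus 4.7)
The plan is to establish the eight assertions as a coordinated package using the defining quasi-median axioms together with the parallelism relation on edges. I would prove (7) first, since it underpins most of the other items: each hyperplane separates $X$ into distinct sectors, so a path crossing a hyperplane twice can be shortened via the square/triangle moves defining parallelism. Once (7) is in hand, (6) becomes immediate, because if a gated subgraph $Y$ contains an edge of a triangle $T=\{a,b,c\}$ with $c\notin Y$, then the gate of $c$ in $Y$ would have to lie on geodesics from $c$ to both $a$ and $b$, forcing it to equal both endpoints at once; the no-$K_4^-$ axiom then rules out the configuration.

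For (1) I would check each of the three quasi-median axioms inherits to a gated $Y$: the unique quasi-median of a triple in $Y$ is realized in $X$ on geodesics that can be chosen inside $Y$ by the gate property; an induced $K_4^-$ in $Y$ would already be an induced $K_4^-$ in $X$; and an isometric $C_6$ in $Y$ has its $3$-cube hull in $X$ spanned by vertices with gates in $Y$, hence contained in $Y$. For (2), the nearest vertex of a maximal clique $C$ to $x$ is unique by the no-$K_4^-$ axiom, and (6) routes every geodesic from $x$ to $C$ through it; the case of prisms follows by projecting coordinate-wise onto each clique factor. For (3), I would combine (7) with the parallelism construction of gates on the carrier $N(H)$, and then derive the gatedness of combinatorial hyperplanes and sectors from the fact that geodesics between vertices on the same side of $H$ do not cross $H$.

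Item (4) is a standard gate-lifting argument: the four-vertex concatenation $x$–$\mathfrak{g}(x)$–$\mathfrak{g}(y)$–$y$ is a geodesic, so any hyperplane separating the gates intersects $Y$ and separates $x,y$, while any hyperplane separating $x,y$ that crosses $Y$ must be crossed by the middle segment. For (5), I would build the gated hull of $S$ by iteratively closing under gates and prism-completion, checking at each stage that the new hyperplanes introduced are restrictions of hyperplanes separating pairs in $S$; the crossing/osculation statement follows because the parallelism relation restricts cleanly. Finally, (8) follows by fixing a clique $C$ dual to $H$ and a fibre $J_H$ and using parallelism to transport each $v\in N(H)$ to its $(C, J_H)$-coordinates, giving a graph isomorphism with the product. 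The main obstacle will be (5): one must simultaneously verify that the closure under gates and prisms terminates in a gated subgraph and that it never introduces a hyperplane beyond those separating a pair of $S$, which requires a careful inductive argument tracking both the hyperplanes and the prisms added at each stage, whereas the remaining items reduce to bookkeeping once (7) and (2) are available.
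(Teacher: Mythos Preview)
The paper does not prove this proposition at all: it is stated as background and the reader is referred to Genevois's thesis \cite{Gen17} (``see section 2 of \cite{Gen17} for more details''). There is therefore no proof in the paper to compare your proposal against; your sketch is an attempt to reprove material that the paper simply imports.

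As a standalone outline your plan is broadly reasonable, but one step is wrong as written. In your argument for (4) you assert that the concatenation $x$--$\mathfrak{g}(x)$--$\mathfrak{g}(y)$--$y$ is a geodesic. It is not in general: take $x$ and $y$ close to each other but far from $Y$, and the detour through the gates can be arbitrarily long. The correct mechanism is that no hyperplane crossing $Y$ can separate a vertex from its gate (this is the characterisation of the gate as the unique point of $Y$ minimising the number of separating hyperplanes), so a hyperplane intersecting $Y$ separates $x$ from $y$ if and only if it separates $\mathfrak{g}(x)$ from $\mathfrak{g}(y)$. Once you replace the false geodesic claim with this lemma, the rest of (4) goes through. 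Your sketch for (7) also quietly assumes that hyperplanes separate, which in Genevois's development is itself a nontrivial consequence of the quasi-median axioms and should be flagged as a lemma rather than a starting point.
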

 
\begin{definition}
	A $crossing$ $graph$ of a quasi-median graph $X$, denoted by $\Delta X$, is a graph whose vertex set consists of all hyperplanes of $X$ and two distinct hyperplanes are adjacent when they are cross. Similarily, a $contact$ $graph$ of $X$, denoted by $CX$, is a graph whose vertex set is the set of all hyperplanes of $X$ and two distinct hyperplanes are adjacent when they are either cross or osculate.
\end{definition}

 In the case of median graph, the contact and crossing graph are analogous to the curve graph of the surface, they are gromov-hyperbolic and have hierarchy path which plays crucial roles to define the HHS structure on the CAT(0) cube complex with factor system. The similar result was developed by Valiunas in \cite{Val18}.

\begin{proposition}
	Let $X$ be a quasi-median graph. Then the contact graph (crossing graph, if it is connected) of $X$ is quasi-tree.
\end{proposition}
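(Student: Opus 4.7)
The plan is to apply Manning's bottleneck criterion for quasi-trees: a connected graph $Y$ is quasi-isometric to a simplicial tree if and only if there exists $D>0$ such that, for every pair of vertices $x,y\in Y$ and every geodesic $\gamma$ joining them, every path from $x$ to $y$ lies in the $D$-neighbourhood of $\gamma$. It suffices to show that, for each pair of distinct non-contacting vertices $H, H'$ of the contact graph $CX$, there exists a ``bottleneck'' hyperplane $K$ of $X$ such that every $CX$-path from $H$ to $H'$ must pass through some hyperplane that contacts $K$.

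First, I would choose $K$ to be any hyperplane of $X$ separating the carriers $N(H)$ from $N(H')$; such $K$ exists because $H$ and $H'$ are disjoint gated subgraphs of $X$ and hyperplanes separate $X$ into sectors by item (3) of the gated subgraph proposition. The problem then reduces to showing that for every sequence of pairwise contacting hyperplanes $H=L_0,L_1,\dots,L_m=H'$, some $L_j$ must contact $K$.

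Second, I would exploit the product structure $N(K)\cong C\times J_K$ from item (8), where $C$ is the maximal clique dual to $K$. Suppose for contradiction that no $L_j$ contacts $K$. Then each $L_j$ sits in a single sector of $X\setminus K$, and its gated projection $\mathfrak{g}(L_j)$ onto $N(K)$ is contained in a single fibre $\{c_j\}\times J_K$ for some vertex $c_j\in C$. The key claim is that whenever $L_j$ contacts $L_{j+1}$ in $X$, their gated projections into $N(K)$ still contact in $N(K)$, which in the product $C\times J_K$ forces $c_j=c_{j+1}$. This should follow from item (5), which says that the hyperplanes of a gated hull are precisely the restrictions of hyperplanes of $X$ separating vertices in the hull, together with the fact that gated projection intertwines with the hyperplane combinatorics (items (3) and (4)).

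Third, I would derive a contradiction: since $K$ separates $H$ from $H'$, their projections lie in distinct fibres, so $c_0\ne c_m$, contradicting $c_0=c_1=\cdots=c_m$. Hence some $L_j$ contacts $K$, which gives the bottleneck with $D=1$ and applies to both $CX$ and the connected crossing graph $\Delta X$ by the same argument restricted to crossing relations.

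The main obstacle is proving the projection-preserves-contact claim in the quasi-median setting. In CAT(0) cube complexes this is a standard disc diagram argument, but the possible presence of triangles and larger maximal cliques in $X$ means several hyperplanes may meet at a single vertex in subtle configurations. One must carefully use the product decomposition of $N(K)$ together with the axiomatic exclusion of induced $K_4^{-}$ and the convex-hull condition on isometric $C_6$'s to rule out ``fake contacts'' created by projection, and to ensure that a genuine contact between $L_j$ and $L_{j+1}$ actually witnesses a common vertex of $\mathfrak{g}(L_j)$ and $\mathfrak{g}(L_{j+1})$ rather than a mere pair of projections that cross $K$ at different clique-slices.
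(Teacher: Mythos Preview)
Your overall strategy---Manning's bottleneck criterion plus a separating hyperplane---is exactly what the paper does (the argument appears as Lemma~4.15 for the factored contact graph, and specialises immediately to the contact or crossing graph). However, there is a genuine gap in your reduction. Manning's criterion, as stated in the paper (Proposition~4.14), requires the bottleneck $m(x,y)$ to be a \emph{midpoint}: an arbitrary bottleneck is worthless, since one could always take $m=x$. Your choice of $K$ is ``any hyperplane separating $N(H)$ from $N(H')$'', and nothing in your outline controls where $K$ sits along a $CX$-geodesic from $H$ to $H'$. The paper closes this gap by first constructing \emph{hierarchy paths} (Lemma~4.13): geodesics $H=A_0,\dots,A_m=H'$ in the contact graph together with vertices $p_i\in N(A_{i-1})\cap N(A_i)$ such that the concatenated $X$-path from $p$ to $q$ is itself an $X$-geodesic. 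One then takes $M=A_i$ near the midpoint and chooses $K$ to be a hyperplane separating $p_i$ from $p_{i+1}$; this $K$ separates $N(H)$ from $N(H')$ \emph{and} lies at distance~$1$ from the midpoint $M$. Without hierarchy paths (or an equivalent device) you cannot locate your separating hyperplane near the midpoint, and the criterion does not apply.

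Separately, the ``main obstacle'' you flag is not an obstacle at all, and your projection apparatus is unnecessary. If $N(L_j)\cap N(L_{j+1})\neq\emptyset$, pick any $x$ in the intersection; then $\mathfrak g(x)$ lies in both $\mathfrak g(N(L_j))$ and $\mathfrak g(N(L_{j+1}))$, so the projections intersect---no disc diagrams, no $K_4^-$ or $C_6$ analysis needed. The paper bypasses projections entirely with the one-line argument: the carriers $N(L_0),\dots,N(L_m)$ form a chain of pairwise-intersecting gated subgraphs running from one side of $K$ to the other, so some $N(L_j)$ has vertices in two distinct sectors of $K$; any geodesic in the gated subgraph $N(L_j)$ between such vertices contains an edge dual to $K$, whence $K$ contacts $L_j$.
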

 
 The link graph of vertices and their intersection in the crossing graph have useful geometric interpretation; the crossing graphs of domains of the hyperclosure (factor system) of $X$, observed in \cite{BHS17},\cite{HS20} in the case of median graph. To see this, the key ingredient is the Sageev's construction \cite{Sa95}, that the set of hyperplanes of a median graph $X$ defines a space with wall whose cubulation isometrically realize $X$. Similar counterpart exists for the quasi-median graphs, the space with partition and its quasi-cubulation which relates the combinatorics of hyperplanes and the geometry of $X$. We omit the detailed discussion, see section 2.4 of \cite{Gen17} for more details. 
 
 \begin{proposition}[Proposition 2.63, \cite{Gen17}]
 	Let $X$ be a quasi-median graph. The set of hyperplanes of $X$ defines a space with partition. Furthermore, the quasi-cubulation of $X$, viewed as a space with partitions, is isometric to $X$.
 \end{proposition}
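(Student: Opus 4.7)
The plan is to mimic Sageev's cubulation in the quasi-median setting, where each hyperplane contributes not a two-sided wall but a partition into sectors (one per vertex of the dual maximal clique).

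First, I would record that the hyperplanes genuinely give a space with partitions. By Proposition 2.30(8), the carrier $N(H)$ of a hyperplane $H$ splits as $C \times J_H$ for the maximal clique $C$ dual to $H$, and by Proposition 2.30(3) the sectors delimited by $H$ are gated. Deleting the interiors of the edges in $[e]$ (where $H=[e]$) therefore partitions $V(X)$ into sectors indexed by the vertices of $C$, yielding the desired partition at each hyperplane and, simultaneously, the natural indexing by the dual clique needed to specify which partitions are \emph{adjacent}.

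Next, I would spell out the quasi-cubulation following \cite{Gen17}: its vertices are \emph{coherent orientations}, namely assignments $\sigma$ picking a sector $\sigma(H)$ for each hyperplane $H$ so that (i) any two chosen sectors meet, and (ii) $\sigma$ differs from a fixed basepoint orientation on only finitely many hyperplanes. Two orientations are joined by an edge when they differ on exactly one hyperplane $H$, with $\sigma(H)$ and $\sigma'(H)$ indexed by adjacent vertices of the dual clique. I would then define the comparison map $\Phi : V(X) \to V(\operatorname{Quas}(X))$ by sending $x$ to the orientation $\sigma_x$ choosing, for each $H$, the sector containing $x$. Well-definedness of $\sigma_x$ is immediate from $x \in \sigma_x(H)\cap\sigma_x(H')$, and $\Phi$ sends edges to edges because the endpoints of an edge of $X$ lie in adjacent sectors of the unique hyperplane dual to that edge and in a common sector of every other hyperplane.

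The distance argument uses Proposition 2.30(7): a path in $X$ is a geodesic if and only if it meets each hyperplane at most once. Consequently $d_X(x,y)$ equals the number of hyperplanes separating $x$ from $y$, which is exactly the number of hyperplanes on which $\sigma_x$ and $\sigma_y$ disagree; on the quasi-cubulation side this is also the edge distance between $\sigma_x$ and $\sigma_y$, realized by editing one disagreement at a time along the images of consecutive vertices on an $X$-geodesic. Thus $\Phi$ is an isometric embedding.

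The main obstacle will be \textbf{surjectivity}: given an abstract coherent orientation $\sigma$, one must produce $x \in V(X)$ with $\sigma_x=\sigma$. Here I would invoke a Helly-type intersection property for the family $\{\sigma(H)\}_H$ of gated subgraphs: since they pairwise intersect and only finitely many differ from the sectors at a fixed basepoint, a standard induction on the size of this finite symmetric difference, combined with the gated projection properties in Proposition 2.30(4)--(5), produces a common vertex. Realizing this Helly step cleanly in the multi-sector setting—so that moving across one hyperplane at a time stays inside the intersections of all other chosen sectors—is the technical heart of the argument; once it is established, $\Phi$ is a bijective isometry and the proposition follows.
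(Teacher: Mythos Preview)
The paper does not supply its own proof of this proposition: it is quoted verbatim as Proposition~2.63 of \cite{Gen17}, with the surrounding text explicitly saying ``We omit the detailed discussion, see section 2.4 of \cite{Gen17} for more details.'' So there is nothing in the paper to compare your argument against.

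That said, your sketch is exactly the standard route taken in \cite{Gen17}: hyperplanes give partitions into gated sectors, the quasi-cubulation is built from coherent orientations (sector choices pairwise intersecting and differing from a principal orientation on finitely many hyperplanes), the comparison map $x\mapsto\sigma_x$ is an isometric embedding because $d_X(x,y)$ equals the number of separating hyperplanes, and surjectivity comes from a Helly-type finite-intersection argument for gated subgraphs. Two small remarks. First, your internal references to ``Proposition 2.30'' do not match this paper's numbering; the relevant facts here are collected in Proposition~4.4. Second, your surjectivity paragraph is the only place where real work hides: you will want the Helly property for gated subgraphs of a quasi-median graph (any finite family of pairwise-intersecting gated subgraphs has nonempty total intersection), together with the observation that the finitely many sectors on which $\sigma$ differs from $\sigma_{x_0}$ can be corrected one hyperplane at a time while remaining coherent. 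Genevois proves exactly this in the cited section, so your plan is on target; it is just not something the present paper undertakes.
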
 
 
 The proposition implies that the quasi-median graphs with the same combinatoric data of hyperplanes are isometric. In particular, two gated subgraphs of $X$ with the same set of hyperplanes are isometric. It allows to associate to each subgraph of the crossing graph  some gated subgraphs, defined up to $parallel$. 

 \begin{definition}
 	We say two gated subgraphs $Y_{1},Y_{2}\subseteq X$ are $parallel$ if for each hyperplane $H$, we have $H\cap Y_{1}\neq \emptyset$ if and only if $H\cap Y_{2}\neq \emptyset$.
 \end{definition}
 
 We note that if $Y_{1},Y_{2}$ are parallel gated subgraphs of $X$, then the gated projection of $Y_{1}$ into $Y_{2}$ is $Y_{2}$.
 \par 
 
 In the case of the median graph, the cubulation of the space with wall is constructed from the ultrafilters, considered as a map that assigns to each wall a half space in comparable way. The quasi-cubulation defined in exactly the same way; it is constructed from the choice functions which we call the $orientation$ $maps$, pick a sector from each partition in comparable way. (roughly, no two choice of sectors are disjoint.) The observation that the orientations for crossing hyperplanes never incomparable, hence the orientation map restricted on the pairwisely crossing collections of hyperplanes devided into a product. The following can be easily checked by mimic the proof of lemma 2.5 of \cite{CS11}.
 
 \begin{proposition}
 	Let $X$ be a quasi-median graph and $Y_{1},Y_{2}\subset X$ are gated subgraphs. If the set of hyperplanes crossing $Y_{1}$ and the set of hyperplanes crossing $Y_{2}$ are pairwisely crossing, then there is an isometrically embedded copy of $Y_{1}\times Y_{2}$ in $X$ which contains some parallel copies of $Y_{1}$ and $Y_{2}$ .
 \end{proposition}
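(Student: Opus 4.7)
The plan is to mimic the Caprace--Sageev argument (Lemma 2.5 of \cite{CS11}) using the quasi-cubulation afforded by Proposition 4.7. Write $\mathcal{H}_i$ for the set of hyperplanes crossing $Y_i$; by hypothesis every element of $\mathcal{H}_1$ crosses every element of $\mathcal{H}_2$, and in particular $\mathcal{H}_1 \cap \mathcal{H}_2 = \emptyset$ since no hyperplane crosses itself. By Proposition 4.4(4), the hyperplanes crossing the gated projection of $Y_2$ into $Y_1$ are exactly those in $\mathcal{H}_1 \cap \mathcal{H}_2 = \emptyset$, so this projection collapses to a single vertex $y_1^* \in Y_1$; symmetrically $y_2^* \in Y_2$ is a single vertex, and the hyperplanes separating $y_1^*$ from $y_2^*$ form a set $\mathcal{I}$ disjoint from $\mathcal{H}_1 \cup \mathcal{H}_2$.

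First I would produce a parallel copy $Y_2'$ of $Y_2$ containing $y_1^*$ by sliding $Y_2$ across the hyperplanes in $\mathcal{I}$ one at a time along a geodesic from $y_2^*$ to $y_1^*$. The local model at each step is the carrier $N(H) \cong C \times J_H$ of Proposition 4.4(8): the two combinatorial hyperplanes of $H$ are parallel, so a gated subgraph parallel to $Y_2$ sitting on one side transports to a parallel copy on the other. Iterating along $\mathcal{I}$ yields $Y_2'$ with $\mathcal{H}(Y_2') = \mathcal{H}_2$ and $y_1^* \in Y_2'$.

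Next I would consider the gated hull $P$ of $Y_1 \cup Y_2'$, which contains $y_1^*$. By Proposition 4.4(5), the hyperplanes of $P$ are exactly $\mathcal{H}_1 \cup \mathcal{H}_2$, partitioned into two families each of which pairwise crosses the other. Appealing to the quasi-cubulation of Proposition 4.7, I would define a map $\Phi : Y_1 \times Y_2 \to X$ sending $(u,v)$ to the unique vertex whose orientation agrees with $u$ on $\mathcal{H}_1$, with $v$ on $\mathcal{H}_2$, and with $y_1^*$ on every other hyperplane. The crossing hypothesis guarantees that any two chosen sectors intersect pairwise, so this is a valid orientation and hence corresponds to a vertex of $X$; counting hyperplanes separating images then yields the product metric and identifies the image of $\Phi$ with $P$.

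The main obstacle is verifying that $\Phi(u,v)$ is always a legitimate vertex of the quasi-cubulation, i.e.\ that the stitched-together orientation is globally consistent. Cross-family pairs in $\mathcal{H}_1 \cup \mathcal{H}_2$ are handled directly by the crossing hypothesis, and same-family consistency follows because $u \in Y_1$ and $v \in Y_2$ are genuine vertices; the delicate part is compatibility with the ambient hyperplanes in $\mathcal{I}$ and outside $\mathcal{H}_1 \cup \mathcal{H}_2 \cup \mathcal{I}$, where one uses that such hyperplanes cross no element of $\mathcal{H}_1 \cup \mathcal{H}_2$, so their $y_1^*$-sectors contain all of $Y_1 \cup Y_2'$ and are compatible with every orientation chosen from the two families. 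Once $\Phi$ is shown to land in $X$, adjacency and isometry follow from the standard hyperplane-counting argument for distances in a quasi-median graph, and the parallel copies $\Phi(Y_1 \times \{y_2^*\})$ and $\Phi(\{y_1^*\} \times Y_2) = Y_2'$ witness the final clause of the proposition.
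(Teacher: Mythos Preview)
Your proposal is essentially what the paper does: it gives no proof beyond the sentence ``the following can be easily checked by mimic the proof of lemma~2.5 of \cite{CS11}'' together with the remark that orientations of crossing hyperplanes are always compatible, so the orientation map on $\mathcal{H}_1\cup\mathcal{H}_2$ splits as a product. Your outline via the quasi-cubulation of Proposition~4.7 is exactly this adaptation.

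Two small points to tighten. First, the phrase ``such hyperplanes cross no element of $\mathcal{H}_1\cup\mathcal{H}_2$'' is not what you need (ambient hyperplanes can certainly cross individual members of $\mathcal{H}_i$); what you actually use is that they cross neither $Y_1$ nor $Y_2'$, which is immediate from $H'\notin\mathcal{H}_1\cup\mathcal{H}_2$ and then gives $Y_1\cup Y_2'$ inside the $y_1^*$-sector as you say. Second, the sliding step deserves one more sentence of justification: to transport $Y_2^{(k)}$ across $H_k\in\mathcal{I}$ you need $Y_2^{(k)}\subset N(H_k)$, i.e.\ every $H'\in\mathcal{H}_2$ crosses $H_k$. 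This is where the crossing hypothesis re-enters: if some $H'\in\mathcal{H}_2$ failed to cross $H_k$, then $N(H')$ would lie in the $y_2^*$-sector of $H_k$, while any $H''\in\mathcal{H}_1$ has carrier meeting the $y_1^*$-sector, forcing $H''$ to cross $H_k$; so for each $H_k\in\mathcal{I}$ one of $\mathcal{H}_1,\mathcal{H}_2$ lies entirely in $lk(H_k)$, and one slides whichever of $Y_1,Y_2$ fits at that step (or argues more directly with orientations and bypasses the sliding entirely). With that caveat, your argument goes through.
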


 \subsection{CHHF axioms for crossing graphs}

 In this section, we will show that if a crossing graph $\Delta X$ of a quasi-median graph $X$ admits a graph factor system, then there exists a CHHF $(\Delta X, W, \mathfrak{F})$ such that $W$ is quasi-isometric to $X$. In particular, $X$ admits an HHS structure.
 
 \par 
 First we associate to each domain of the graph factor system a gated subgraph of $X$. Suppose $\Delta X$ has a graph factor system $\mathfrak{F}'$. Since $\mathfrak{F}'$ contains all the link graphs of vertices of $\Delta X $ and their intersections, we can replace $\mathfrak{F}'$ by the minimal system which consists of the link graphs of vertices of  $\Delta X $ and their intersections. \par 
 
 For each vertex $H\in\Delta X$, $H$ is a hyperplane of $X$ and the link graph of $H$ consists of those hyperplanes that crosses $H$. First we claim that the link graph $lk(H)\subset \Delta X$ is precisely the crossing graph of any combinatorial hyperplane of $H$. Note that the carrier $N(H)$ of $H$ is isomorphic to the product $C\times J_{H}$. If $H'$ is a hyperplane of $J_{H}$, then $H'$ is dual to some edge $e'$ contained in $J_{H}$ and $C\times e'$ contains an induced square, which implies that $H'$ crosses $H$. On the other hand, if $H'$ crosses $H$, then there exists edges $e,e'$ dual to $H,H'$ respectively such that $e,e'$ forms a corner of some induced square, whose opposite side of $e$ is dual to $H$. Since the gated subgraph $N_{H}$ is an induced subgraph of $X$ spaned by all parallel edges of $e$, the whole square is contained in some product $C\times C'\subset C\times J_{H}$ where $C'$ is the clique graph containing every triangles with one edge $e'$. Hence $e'$ must be contained in some fibre $J_{H}$ of $H$ and in partiular, $H'$ is a hyperplane of $J_{H}$. (indeed the prism $C\times C'$ shows that $H'$ is a hyperplane of every combinatorial hyperplanes of $H$.) We have checked that the set of hyperplanes that crosses $H$ is precisely the set of hyperplanes of $J_{H}$, hence the link graph $lk(H)$ and the crossing graph of each combinatorial hyperplanes $\Delta J_{H}$ have the same vertex set. Since $J_{H}$ is gated subgraph, the hyperplanes of $J_{H}$ crosses if and only if their extension crosses in $X$, so the adjacency of $lk(H)$ precisely consists of the crossing pairs of hyperplanes of $J_{H}$. In other words, $lk(H)=\Delta J_{H}$, $J_{H}$ considered as a quasi-median graph. \par 
 Now consider the intersection of link graphs. We only consider the intersection of two link graph of vertices, $lk(H_{1})\cap lk(H_{2})$. We claim that the intersection of two link graphs $lk(H)\cap lk(H')$ is precisely the crossing graph of the gated projection of one combinatorial hyperplane to the other. Note that the intersection $lk(H)\cap lk(H')$ consists of precisely the collection of hyperplanes that crosses both hyperplanes $H$ and $H'$, that is, the hyperplanes that intersects both combinatorial hyperplanes $J_{H}$ and $J_{H'}$. By (4) of proposition 4.4, the set of hyperplanes intersecting both $J_{H},J_{H'}$ and the set of hyperplanes intersecting the gate projection of $J_{H}$ to $J_{H'}$ are the same. Again, since the gate projection of some gated subgraph into another gated subgraph is gated, the edges in the crossing graph of the projection and the edges of $lk(H)\cap lk(H')$ are the same. \par 
 Using the same arguement inductively to general intersections we obtain the following observation. If the crossing graph $\Delta X$ admits a graph factor system, then $\Delta$ admits a graph factor system $\mathfrak{F}$ which consists of the crossing graphs of the system of gated subgraphs of $X$ obtained by taking every combinatorial hyperplanes their gated projections. (such system of subgraphs are often called the $factor$ $system$ of $X$.)\par 
 
 Now we define $\Delta X$-graph $W$ in the following way. Note that there exists a one-to-one correspondence between the maximal collection of pairwisely crossing hyperplanes and the maximal prism of $X$. This allows us to consider $W$ as the set of maximal prisms of $X$. We declare that two maximal prisms $w,w'$ of $W$ are adjacent in $W$ if $w\cap w'\neq \emptyset $ considered as the subgraphs of $X$. 
\begin{lemma}
	Let $X$ be a quasi-median graph. Let $\Delta X$ be a crossing graph of $X$ which admits a graph factor system. let $\mathfrak{F}$ be the graph factor system of $\Delta X$ constructed above. Let $W$ be the $\Delta X$-graph as above. Then the triple $(\Delta X, W, \mathfrak{F} )$ is a CHHF, in particular $(W, \mathfrak{F})$ admits an HHS structure.
\end{lemma}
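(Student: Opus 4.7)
The plan is to verify the two conditions of the CHHF definition for the triple $(\Delta X, W, \mathfrak{F})$; once this is done, the main theorem of Section 3 will automatically supply an HHS structure on $W$, and a routine comparison will yield the quasi-isometry $W \to X$. Throughout, I use the bijection implicit in the discussion preceding the lemma between domains $F \in \mathfrak{F}$ and gated subgraphs $G_F \subseteq X$ obtained by iterating combinatorial hyperplanes and gated projections, with $F = \Delta G_F$. For each such $F$ I set $W^F$ to be the $F$-graph whose vertices are the maximal prisms of $G_F$ and whose edges record pairs of prisms sharing a vertex of $G_F$; for $F = \Delta X$ this recovers the given $W$.

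For the $\delta$-hyperbolicity condition, my first step is to identify $CF$, up to uniform quasi-isometry, with a coned-off contact graph of $G_F$. Two hyperplanes of $G_F$ lie in $W^F$-adjacent maximal prisms iff they share a carrier vertex of $G_F$, i.e.\ iff they osculate or cross, so the additional edges of $CF$ beyond those of $F = \Delta G_F$ realize exactly the extra contact edges; each projection vertex $b_{F'}$ cones off the subgraph $\Delta G_{F'} \subseteq F$ attached to a proper gated sub-subgraph $G_{F'} \subseteq G_F$. Since contact graphs of quasi-median graphs are quasi-trees by the result of Valiunas recorded earlier, and since each coned-off subgraph $\Delta G_{F'}$ is itself a quasi-convex quasi-tree by the same result applied to $G_{F'}$, a standard cone-off-of-quasi-trees argument yields hyperbolicity of $CF$ with a uniform constant. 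For the quasi-isometric embedding $CF \hookrightarrow Y_F$, my plan is to construct a coarse retraction $Y_F \to CF$ that, on $V(\Delta X) \cap Y_F$, sends each hyperplane $H \notin F^\perp$ to any hyperplane crossing both $H$ and $G_F$ (such a hyperplane exists because $H$ does not cross all of $G_F$, so its carrier must meet some hyperplane of $G_F$), and on projection vertices uses the gated projection of the corresponding subgraph onto $G_F$. The key technical input is item (4) of the gated-subgraph proposition, which makes the retraction coarsely well-defined, together with a length estimate showing it is coarsely Lipschitz.

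For the hierarchy condition, clause (1), $C_F F' = CF'$, reduces to the observation that two hyperplanes of $G_{F'}$ contact in $X$ iff they contact in $G_F$, which holds by item (5) of the same proposition. For clause (2), given $F' \sqsubseteq F$ and a maximal clique $x \subseteq PF'_F$, I define the induced map $x: W^{F'} \to W^F$ by sending a maximal prism $w$ of $G_{F'}$ to the unique maximal prism of $G_F$ containing the product $x \ast w$. This product realizes as an isometrically embedded subgraph of $G_F$ by the last proposition of the background subsection, because every hyperplane of $x$ lies in $(F')^\perp \cap F$ and therefore crosses every hyperplane of $w \subseteq G_{F'}$; maximality of $x$ and of $w$ in their respective ambient domains then singles out a unique enveloping maximal prism of $G_F$.

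The main obstacle I anticipate is the uniform quasi-isometric embedding $CF \hookrightarrow Y_F$: the candidate coarse retraction must remain Lipschitz despite the many additional edges and projection edges in $Y_F$, which requires delicate control over how the hyperplane sets attached to different members of $\mathfrak{F}_F$ interact. I expect to proceed by induction on the complexity of $\mathfrak{F}$, using the pairwise-crossing product proposition to handle parallel factors and the gated-projection proposition to transfer hyperplane data between $G_F$ and its gated sub-subgraphs, ultimately reducing to a finite case patterned on the cubical factor system of \cite{BHS17}. Once both CHHF axioms are verified, the main theorem of Section 3 gives an HHS structure on $W$, and since the map sending a maximal prism of $X$ to any of its vertices is a quasi-isometry $W \to X$ with constants controlled by the maximum prism diameter (and hence by the complexity of $\mathfrak{F}$), $X$ itself inherits the HHS structure as asserted.
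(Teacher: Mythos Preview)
Your overall plan matches the paper's: identify each $CF$ with the factored contact graph of the gated subgraph $G_F$, prove uniform hyperbolicity, build a coarse retraction $Y_F \to CF$ to obtain the quasi-isometric embedding, and verify the hierarchy condition via the product structure of prisms. The hierarchy-condition portion and the identification of $W^F$ with maximal prisms of $G_F$ are essentially the paper's argument.

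There is, however, a genuine gap in your retraction. You propose to send a hyperplane vertex $H \in Y_F \setminus CF$ to ``any hyperplane crossing both $H$ and $G_F$,'' justifying existence by saying that $H \notin F^\perp$ means $H$ fails to cross all of $G_F$, so its carrier must meet some hyperplane of $G_F$. That implication is false: when $H$ is far from $G_F$ in $X$, the gated projection $g$ of the combinatorial hyperplane $J_H$ onto $G_F$ can be a single vertex, and then \emph{no} hyperplane of $G_F$ crosses $H$ at all. The paper's retraction $\psi$ avoids this by using the gated projection $g$ uniformly: if $g$ contains an edge then $g \in \mathfrak{F}_F$ and one sets $\psi(H) = b_g$; if $g$ is a single vertex then $\psi(H)$ is any hyperplane of $G_F$ whose carrier contains that vertex. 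This is precisely the mechanism you already invoke for projection vertices $b_{J'}$, so the fix is simply to apply it to hyperplane vertices as well. The Lipschitz verification is then a direct edge-by-edge check in $Y_F$, not an induction on complexity as you anticipate.

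A secondary point: your ``standard cone-off-of-quasi-trees'' route to the hyperbolicity of $CF$ silently assumes that each $\Delta G_{F'}$ is uniformly quasi-convex in the contact graph of $G_F$, which is not obvious and would itself need an argument of roughly the same strength. The paper bypasses this by proving directly, via hierarchy paths and Manning's bottleneck criterion, that the factored contact graph is a uniform quasi-tree in its own right.
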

 \begin{proof}
 First we check the hyperbolicity condition for the triple. To show this, first we consider the augmented graphs for each domains. If two hyperplanes $H, H'$ of $\Delta X$ are adjacent in $C(\Delta X)$ but not in $\Delta X$, then there exists maximal prisms $P, P'\subseteq X$ such that $H$ is dual to some edge of $P$, $H'$ is dual to some edge of $P'$ and $P$ is adjacent to $P'$ in $W$, that is, $P\cap P'\neq \emptyset$. Since $P$ contains an edge dual to $H$ (similarily for $P',H'$), we have $P\subseteq N(H)$ and $P'\subseteq N(H')$, so $\emptyset \neq P\cap P' \subseteq N(H)\cap N(H')$ and in particular, $H$ and $H'$ osculate. This shows that the osculating pairs are the only candidate of the additionally adjacent paris in $C(\Delta X)$. Indeed, whenever two hyperplane $H, H'$ osculate, we can take a vertex $v\in N(H)\cap N(H')$. Let $P$ be a maximal prism of $X$ containing $v$ and contains an edge dual to $H$ (similarily for $P'$). Since $H$ and $H'$ are osculate, the edges dual to $H$ and $H'$ respectively, containing $v$ in their end points, cannot be contained in a single prism and hence $P$ and $P'$ can be chosen distinctly. Since $P\cap P'$ contains $v$ by our construction, $P$ is adjacent to $P'$ in $W$ and hence $H$ and $H'$ are also adjacent in  $C(\Delta X)$. In other words, the induced subgraph of $C(\Delta X)$ spanned by $\Delta X$ is the contact graph $CX$ of $X$. Hence $C(\Delta X)$ is the contact graph of $X$ together with the collection of projection vertices $b_{J}$. This graph is nothing but the factored contact graph developed in \cite{BHS17} for the cubical cases, and using the hierarchy path on contact graph and mimic the arguements in \cite{BHS17},\cite{Val18}, we can check that the factored contact graph $C(\Delta X)$ is uniformly hyperbolic. (since the proof has no different, we omit here. The details of proof of hyperbolicity of $C(\Delta X)$ is attached in the last section.) \par 
 The same arguement shows that each augmented graphs $CJ$, $J\in\mathfrak{F}$, is the factored contact graph of the quasi-median graph $J$ and hence uniformly hyperbolic, which implies first part of the hyperbolicity condition of CHHF.
 \par 
 To check another half of the hyperbolicity condition, we need to show that each augmented subgraph $CJ$ admits a quasi-isometry embedding into the complement graph $Y_{J}$. Since $CJ$ is an induced subgraph of $Y_{J}$, it is enough to show that there exists a Lipschitz projection $\psi : Y_{J} \rightarrow CJ$ which is identity on $CJ$. We define $\psi$ in the following way; for each hyperplane vertex $H\in Y_{J}\setminus CJ$, let $g$ be the gated projection image of any combinatorial hyperplane $J_{H}$ of $H$ into $J$. If $g$ contains an edge, then $g$ has non-empty contact graph and by our construction $g\in \mathfrak{F}$. Also $g\subset J$ tells that $g\sqsubsetneq J$, so $b_{g}\in CJ$. We take $\psi (H)=b_{g}$ in this case. If $g$ is a vertex, then take $\psi (H)=H'$ for some hyperplane $H'$ of $J$ that contains $g$ in its carrier. For a non-hyperplane vertex $b_{J'}\in Y_{J}\setminus CJ$, $J'\in\mathfrak{F}$, consider the associated gated projection $g'$ of $J'$ onto $J$ and defines $\psi(b_{J'})$ in the same way. Note that when the gated projection $g$ into $J$ is a vertex, the hyperplanes containing the vertex $g$ are pairwisely contact and hence the map $\psi$ is coarsely well-defined map. Also note that in both cases, for each $v\in Y_{J}\setminus CJ$, any hyperplane of $J$ containing the vertex of $g(v)$ in its carrier lies at most distance 2 from $\psi(v)$. \par 
 To check that $\psi$ is the Lipschitz map, we only need to show that for each edge $e\subset Y_{J}$, $\psi(e)$ is uniformly bounded set. 
 If $e\subset CJ$, then $\psi(e)=e$ and we are done. If $e\cap CJ=\{ u\}$ and $e\cap (Y_{J}\setminus CJ)=\{ v \}$, then either $u$ is a hyperplane vertex that crosses both $J$ and the gated subgraph $v$ or $u=b_{J'}$, $J'\sqsubsetneq J$. In the first case, the gated projection $g$ of $v$ into $J$ is not a vertex (since some hyperplane $u$ separates some points in $g$.) and $u$ is a hyperplane of $g$. Hence $\psi(v)=b_{g}$ is adjacent to $u=\psi(u)$ as desired. If $u=b_{J'}$, $J'\sqsubsetneq J$, then since $v$ is adjacent to $b_{J'}$, $v\in CJ$ which violates our assumption that $v\notin CJ$. If $e\subset (Y_{J}\setminus CJ)$, then the gated subgraphs (either some combinatorial hyperplane or a gated subgraph contained in the factor system of $X$) of the two end points of $e$ must intersect, hence whose gated projections into $J$ also intersect. As we already observed, for some hyperplane vertex $H\in CJ$ containing the projected intersection point, the images under $\psi$ of the end points of $e$ lies at most distance 2 from $H$ and $\psi(e)$ is uniformly bounded as desired. 
 \par
 To check the hierarchy condition, for each domain $J\in\mathfrak{F}$ we first associate the $J$-graph $W^{J}$. Note that each $J\in\mathfrak{F}$ is a gated subgraph of $X$ and hence is quasi-median on its on right, so we simply associate $W^{J}$ by the graph of maximal prisms of $J$ with adjacency by nonempty intersection.

 The orthogonality relation between domains implies the corresponding gated subgraphs have pairwise crossing hyperplanes, hence by 4.9, there is an isometrically embedded copy of the products of the domains in $X$. Further the container $J^{\perp}$ consists of the maximal collection of such hyperplanes, implying that $J\times J^{\perp}$ is the maximal product subspace containing all the parallels of $J$. For each maximal clique $c\subset J^{\perp}$, $c$ corresponds to a maximal prism of $J^{\perp}$ considered as some representative gated subgraph. We define the map $c : W^{J} \rightarrow W$ by sending each maximal prism $p\subset J$ to some maximal prism of $X$ containing $p\times c$. The map $c$ is indeed the induced map. If the prisms $p,p'\subset J$ intersect in $J$, then the prisms $c(p), c(p')$ also intersect in $X$. On the other hand, if the prisms $p,p'$ are disjoint, then there exists some hyperplane $H$ of $J$ that separates $p$ and $p'$. We claim that the extension of $H$ to $X$ separates the prisms $c(p)$ and $c(p')$ in $X$. Suppose not, then we have two possibilites: either two prisms are in the same sector delimited by $H$ or $H$ separates some points in some prism, say $c(p)$. First case is impossible, since otherwise the subprisms $p,p'$ are contained in the same sector delimited by $H$ and hence $H$ cannot separates $p$ and $p'$. If $H$ separates some points in $c(p)$, then $c(p)$ contains some edges dual to $H$. Furthermore, the edge of $c(p)$ dual to $H$ is not contained in $p$, since otherwise $H$ separates some points of $p$, which is impossible by our choice of $H$. Let $h$ be the clique subgraph of $c(p)$ that is dual to $H$. Since $c(p)$ is a prism, $c(p)\subset N(H)\cong h\times J_{H}$ for some combinatorial hyperplane $J_{H}$ of $H$. Since $p\subset J_{H}$ and $H$ is a hyperplane of $J$, we can take some clique $h'\subset J$ parallel to $h$ which forms a corner of induced square with $p$, hence we have $h'\times p\subset J$, the prism of $J$ that properly contains $p$. Since we assumped that $p$ is a maximal prism of $J$, this is impossible. \par 
 Also the construction guarantees that $C_{J}(J')=C(J')$ for each $J'\sqsubseteq J$ since $C_{J}(J')$ is nothing but the factored contact graph of $J'$ with the same factor system $\mathfrak{F}_{J'}$.  
 \end{proof}
 Note that if $\Delta X$ admits a graph factor system, then there is a bound on the maximal number of pairwise-crossing hyperplanes and hence bound on the cubial dimension of $X$, so each maximal prism of $X$ have uniformly bounded diameter. Hence whenever $\Delta X$ admits a graph factor system, $X$ and $W$ is quasi-isometric. Also $X$ is an HHS. \par 
 Since the median graph is itself a quasi-median graph and is 1-skeleton of CAT(0) cube complexes, we provided a combinatorial re-proof
 of the factor system criterion for the CAT(0) cube complex.
 \begin{corollary}
 	Let $X$ be a CAT(0) cube complex, $\Delta X$ be its crossing graph. If $\Delta X$ admits a graph factor system, then $X$ is an HHS. 
 \end{corollary}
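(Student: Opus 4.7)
The plan is to derive this corollary directly from Lemma 4.10 together with the main theorem (Theorem 3.8), specializing the quasi-median framework to the median case. The first step is to observe that the $1$-skeleton of a CAT(0) cube complex is a median graph, and every median graph is a quasi-median graph in which every triple of vertices admits a (unique) $0$-quasi-median. In particular, the hyperplanes of $X$ in the cubical sense agree with the hyperplanes in the sense of Definition 4.2, and the crossing graph $\Delta X$ in the cubical sense coincides with $\Delta X$ regarded as the crossing graph of a quasi-median graph. Hence the hypothesis that $\Delta X$ admits a graph factor system places us inside the setting of Lemma 4.10.

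Next I would invoke Lemma 4.10 to produce the CHHF triple $(\Delta X, W, \mathfrak{F})$, where $W$ is the $\Delta X$-graph whose vertices are the maximal prisms of $X$ (which in the cubical setting are the maximal cubes) and whose edges record non-empty intersection of prisms. By Theorem 3.8 (the main theorem), any CHHF triple yields an HHS structure on $(W, \mathfrak{F})$, with hierarchy constant depending only on the complexity of $\mathfrak{F}$ and on $\delta$.

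It remains to transfer the HHS structure from $W$ to $X$. The existence of a graph factor system on $\Delta X$ bounds the size of any pairwise-crossing collection of hyperplanes of $X$, which in turn bounds the cubical dimension of $X$. Consequently every maximal cube has uniformly bounded diameter, and the natural map sending a vertex $v \in X$ to any maximal cube containing $v$ (and inversely, sending a maximal cube to any of its vertices) is a quasi-isometry between $X$ and $W$. Since being an HHS is preserved under quasi-isometry when the HHS structure is taken back along the quasi-isometry via the same set of domains and projections, we conclude that $X$ itself admits an HHS structure.

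The only real content in the argument is the verification of Lemma 4.10 in the previous subsection; for the corollary itself the main thing to keep honest is the quasi-isometry between $X$ and $W$, since the projections $\pi_F$ were defined on the vertex set of $W$ rather than directly on $X$. I expect no obstacle beyond observing that pre-composing each projection with the coarse map $X \to W$ yields the required projection data on $X$ with at most a uniform additive loss controlled by the maximal cube diameter.
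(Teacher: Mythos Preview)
Your proposal is correct and follows essentially the same route as the paper: invoke Lemma 4.10 (noting that in the median case maximal prisms are maximal cubes), then use the bound on cubical dimension coming from the graph factor system to conclude that $W$ and $X$ are quasi-isometric. The only cosmetic differences are that the paper sends a maximal cube to its midpoint rather than to one of its vertices, and that you are somewhat more explicit than the paper about how the HHS structure is pulled back along the quasi-isometry.
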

 \begin{proof}
 	We note that in the proof of 4.10, the existence of a graph factor system in $\Delta X$ and the existence of a factor system in $X$ are equivalent. If $\Delta X$ admits a graph factor system, then by 4.10, the $\Delta X$-graph $W$, considered as the graph whose vertex set consists of the maximal cubes of $X$ and edges defined from the intersection of cubes, admits an HHS structure. The map $\phi : W \rightarrow X$ that sends each vertex of $W$, considered as a maximal cube, to its midpoint is a Lipschitz map, Since the existence of the graph factor system guarantees that the dimension of maximal cubes are bounded, and hence the distance between the midpoints of two maximal cubes with common faces is uniformly bounded in terms of the diameter of the maximal dimensional cube. Also
 	its quasi-inverse map $\psi : X \rightarrow W$ that assigns to each point of $X$ any maximal cube that intersect it is a Lipschitz map. This shows that $X$ and $W$ are quasi-isometric, in particular $X$ is an HHS.
 \end{proof}

 	\par 

\subsection{Hyperbolicity of the factored contact graph}

 In this section, we prove that the factored contact graphs are uniformly hyperbolic spaces. The arguements and proofs which we use here are almost identical to \cite{BHS17},\cite{Val18} and the reader may skip this part.
\begin{proposition}[Lemma 2.10 of \cite{Val18}]
	Let $p,q,r\in V(X)$ be vertices of a quasi-median graph $X$ such that some hyperplane separates $q$ from $p$ and $r$. Then there exists a hyperplane $H$ separating $q$ from $p$ and $r$ and geodesics $\gamma_{p}, \gamma_{r}$ between $q,p$ and $q,r$ respectively such that $q$ is an endpoint of the edges of $\gamma_{p}$ and $\gamma_{r}$ dual to $H$. 
\end{proposition}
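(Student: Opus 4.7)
The approach is to pick $H$ among the hyperplanes separating $q$ from both $p$ and $r$ that minimizes the distance from $q$ to its carrier $N(H)$, and then to show that this minimum is zero, so that some edge at $q$ dual to $H$ extends to the required geodesics. Let $\mathcal{H}$ be this (nonempty) collection, and for each $H \in \mathcal{H}$ write $q_{H}$ for the gate of $q$ in $N(H)$, so that $d(q, q_{H}) = d(q, N(H))$; pick $H \in \mathcal{H}$ minimizing this distance.

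The crucial preliminary fact I would establish is that $q_{H}$ lies on some geodesic from $q$ to $p$, and likewise from $q$ to $r$. Since $H$ separates $q$ from $p$, any geodesic $\gamma$ from $q$ to $p$ crosses $H$ exactly once by property (7) of Proposition 4.4, so $\gamma$ meets $N(H)$ at a vertex $v$ on the $q$-side; the gate property gives $d(q, v) = d(q, q_{H}) + d(q_{H}, v)$, hence $d(q, p) = d(q, v) + d(v, p) \geq d(q, q_{H}) + d(q_{H}, p)$, and the reverse triangle inequality produces equality. Now suppose for contradiction that $d(q, q_{H}) > 0$ and pick any hyperplane $H'$ separating $q$ from $q_{H}$. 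Because $q_{H}$ sits on geodesics from $q$ to $p$ and to $r$, the hyperplane $H'$ must separate $q$ from both, so $H' \in \mathcal{H}$. A geodesic from $q$ to $q_{H}$ contains a single edge dual to $H'$; its $q$-side endpoint lies in $N(H')$ at distance strictly less than $d(q, q_{H})$, giving $d(q, N(H')) < d(q, N(H))$, contradicting minimality. Hence $q \in N(H)$.

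Having an edge $e$ at $q$ dual to $H$, its other endpoint sits in the $p$-side sector of $H$, and any geodesic from that endpoint to $p$ avoids $H$ (each hyperplane is crossed at most once by a geodesic); concatenating with $e$ produces a path crossing each hyperplane at most once, hence a geodesic by property (7). The same procedure applied with a (possibly different) edge at $q$ dual to $H$ yields $\gamma_{r}$. The main obstacle is the gate-on-geodesic fact above: it is not immediate from the gate definition, which only controls distances to vertices of $N(H)$, and one must argue by way of the forced factoring of any $q$-to-$p$ geodesic through $N(H)$ together with a triangle-inequality comparison. Once this is in hand, both the minimization step and the final extraction of $\gamma_{p}, \gamma_{r}$ reduce to routine hyperplane combinatorics in quasi-median graphs.
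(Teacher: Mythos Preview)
The paper does not supply its own proof of this proposition; it is quoted verbatim as Lemma~2.10 of \cite{Val18} and then used as a black box in the proof of the hierarchy-path lemma. So there is no ``paper's proof'' to compare against.

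Your argument is correct. The minimization over $d(q,N(H))$ together with the gate-on-geodesic claim does force $q\in N(H)$, and the final extraction of $\gamma_p,\gamma_r$ via Proposition~4.4(7) is sound. Two small points worth tightening. First, in a genuinely quasi-median (non-median) graph a hyperplane may delimit more than two sectors, so the phrase ``its other endpoint sits in the $p$-side sector'' is not automatic for an arbitrary edge at $q$ dual to $H$; you must \emph{choose} the edge in the clique $C\subset N(H)$ through $q$ whose other endpoint lies in the sector containing $p$ (and possibly a different edge for $r$, which you already allow). The product description $N(H)\cong C\times J_H$ in Proposition~4.4(8) makes this choice available. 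Second, the ``gate-on-geodesic'' fact you flag as the main obstacle is in fact immediate from the computation you wrote: $d(q,p)=d(q,v)+d(v,p)=d(q,q_H)+d(q_H,v)+d(v,p)\ge d(q,q_H)+d(q_H,p)$, with the reverse inequality the triangle inequality; so this step needs no further justification beyond what you already have.
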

\begin{lemma}[Hierarchy path]
	Let $\widehat{CX}$ be the factored contact graph constructed in the proof of 4.10. Let $A,B\in V(\widehat{CX})$ and let $p\in N(A), q\in N(B)$. Then there exists a geodesic $A=A_{0},...,A_{m}=B$ in $ \widehat{CX}$ and vertices $p_{i}\in  N(A_{i-1})\cap N(A_{i})$ for $1\leq i \leq m$ such that $d_{X}(p,q)=\Sigma^{m}_{i=0} d_{X}(p_{i},p_{i+1})$.
\end{lemma}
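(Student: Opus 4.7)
The plan is to induct on $m := d_{\widehat{CX}}(A, B)$, at each step peeling off one vertex of the desired $\widehat{CX}$-geodesic while consuming the initial segment of a concrete $X$-geodesic from $p$ to $q$. Throughout, extend the notation $N(\cdot)$ to projection vertices by $N(b_J) := J$, so that $p \in N(A)$ is unambiguous. For the base case $m = 0$ we have $A = B$, and the claim collapses to $d_X(p,q) = d_X(p,q)$ with $p_0 = p$, $p_1 = q$ and no intermediate vertices.

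For the inductive step, I would first pass from $p$ to $p_1 := \mathfrak{g}_{N(A)}(q)$, the gate of $q$ in the gated subgraph $N(A) \subseteq X$. By the gate property recalled in Proposition 4.4(4), $p_1$ lies on some $X$-geodesic from $p$ to $q$, so $d_X(p, q) = d_X(p, p_1) + d_X(p_1, q)$. It remains to name a neighbor $A_1$ of $A$ along some $\widehat{CX}$-geodesic to $B$ with $p_1 \in N(A) \cap N(A_1)$. Take any $X$-geodesic from $p_1$ to $q$ and let $H$ be the hyperplane dual to its first edge at $p_1$. Since $p_1$ is the gate of $q$ into $N(A)$, the hyperplane $H$ cannot cross $N(A)$, yet $p_1 \in N(H) \cap N(A)$; consequently $H$ osculates $A$ (when $A$ is a hyperplane) or $H$ is a hyperplane of the factor $J$ (when $A = b_J$), and in either case $H$ is adjacent to $A$ in $\widehat{CX}$. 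I would then set $A_1$ to be either $H$ itself or, when it shortens the $\widehat{CX}$-distance more efficiently, a projection vertex $b_{J'}$ with $H \subseteq J'$. Applying the inductive hypothesis to $(A_1, B, p_1, q)$ and prepending $A$ together with $p_1$ produces the desired hierarchy path for $(A, B, p, q)$.

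The main obstacle I foresee is verifying the strict equality $d_{\widehat{CX}}(A_1, B) = m - 1$, as opposed to the trivial bound $d_{\widehat{CX}}(A_1, B) \leq m+1$ coming from adjacency; without this strict drop, the induction has no teeth. Proposition 4.11 is tailor-made for this point: invoking it so that an $X$-witness of the $\widehat{CX}$-geodesic from $A$ to $B$ and the chosen geodesic from $p_1$ to $q$ share a common first edge at $p_1$ pins down an $H$ that lies on every viable $\widehat{CX}$-geodesic from $A$ to $B$, forcing the drop. A secondary bookkeeping issue is the case $A_1 = b_{J'}$: one must check that $p_1 \in J'$, which follows from the construction of $\widehat{CX}$ in Lemma 4.10, where projection vertices $b_{J'}$ are attached precisely to the vertex set of the gated factor $J'$, together with the fact that gates and the factor system are compatible in the quasi-median setting. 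Once these technicalities are dispatched, the induction proceeds and additivity of $d_X$ along the resulting decomposition is immediate from the gate property applied at each step.
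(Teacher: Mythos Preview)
Your inductive strategy is genuinely different from the paper's, which argues by global minimization: over all $\widehat{CX}$-geodesics $A=A_0,\dots,A_m=B$ and all choices $p_i\in N(A_{i-1})\cap N(A_i)$, pick one minimizing $D=\sum_i d_X(p_i,p_{i+1})$, then show by contradiction that the concatenation $\gamma_0\cdots\gamma_m$ is an $X$-geodesic. If not, some hyperplane $H$ is crossed by two subpaths $\gamma_i,\gamma_j$; when $|i-j|>2$ the path $A_i,H,A_j$ shortcuts the $\widehat{CX}$-geodesic, when $|i-j|=2$ one replaces $A_{i+1}$ by $H$ and shortens $D$, and when $|i-j|=1$ Proposition~4.12 manufactures a triangle at $p_{i+1}$ so that $D$ drops by one. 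No step requires knowing in advance which neighbour of $A$ lies on a geodesic to $B$.

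The obstacle you flag is real and your proposed fix does not close it. You need $d_{\widehat{CX}}(A_1,B)=m-1$ for your particular $A_1$ (the first hyperplane $H$ along an $X$-geodesic out of $p_1$, or a projection vertex over it), but the $X$-geodesic from $p_1$ to $q$ gives no control on $\widehat{CX}$-distance to $B$. Your appeal to Proposition~4.12 (you cite 4.11, but that is the CAT(0) corollary) invokes ``an $X$-witness of the $\widehat{CX}$-geodesic from $A$ to $B$'', yet a concatenation that is simultaneously an $X$-geodesic is exactly the content of the lemma, so this is circular. And if instead you take an arbitrary $\widehat{CX}$-geodesic and some $r\in N(A)\cap N(A_1)$, Proposition~4.12 does not apply in the direction you need: since $p_1$ is the gate of $q$ in $N(A)$ and $r\in N(A)$, the point $p_1$ lies on a geodesic from $r$ to $q$, so no hyperplane separates $p_1$ from both. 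The minimization approach sidesteps all of this by never committing to a specific second vertex in advance; if you want to rescue the induction you will effectively be forced to minimize over geodesics and intermediate points anyway.
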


\begin{proof}
	We first note that, by the defining property of adjacency of $\widehat{CX}$, if $A,A'\in \widehat{CX}$ are adjacent, then the associated gated subgraphs $N(A)$ and $N(A')$ intersect. (If $A=b_{F}$, a projection vertex we add, then we denote $N(A)$ as the graph $F$. By the construction, if $A=b_{F}$ and $A$ is adjacent to $A'$, then $A'$ is a hyperplane of $F$ and hence $N(A)$ intersects $N(A')$ in all cases.) Let $p_{0}=p, p_{m+1}=q$. Let $D=\Sigma^{m}_{i=0} d_{X}(p_{i},p_{i+1})$. Suppose $A_{i}$ and $p_{i}$ are chosen such that $D$ is minimized. We claim that $D=d_{X}(p,q)$. \par 
	Let $\gamma_{i}$ be a geodesic joining $p_{i}$ and $p_{i+1}$. Suppose for contradiction that $D>d_{X}(p,q)$, which means that the concatenation path $\gamma=\gamma_{0}\gamma_{1}...\gamma_{m}$ is not a geodesic. By proposition 4.4, there exists a hyperplane $H$ such that $\gamma$ intersects $H$ more than once. Since each subpath $\gamma_{i}$ are geodesic, there exists at least two distinct indices $0\leq i,j\leq m$ such that $\gamma_{i}$ intersects $H$. If $\vert i-j\vert >2$, then the subpath $A_{i}HA_{j}$ gives a short-cut, contradicting to the assumption that $A_{i}$ forms a geodesic in $\widehat{CX}$. Also if $\vert i-j\vert =2$, then the points lying in $A_{i}\cap N(H)$ ($A_{j}\cap N(H)$ respectively) which is closer to $p_{i}$ ($p_{j+1}$ respectively)  are joined by subpath of $\gamma$ that intersects $H$ more than twice, so replacing such subpath by the geodesic in $H$ and  $A_{i+1}$ by $H$ provides smaller $D$, contradicting to the minimality of $D$. Hence $\vert i-j\vert =1$. In this case, $H$ separates $p_{i+1}$ from $p_{i}$ and $p_{i+2}$ so using lemma 4.12, we may choose $H$ (also modify $\gamma_{i},\gamma_{i+1}$ if necessary) such that $H$ separates $p_{i+1}$ from $p_{i}$ and $p_{i+2}$ and the edges of $\gamma_{i},\gamma_{i+1}$ dual to $H$ have $p_{i+1}$ in common, that is there exists a triangle (clique) $C$ dual to $H$ and whose two sides $e_{1},e_{2}$ are edges of $\gamma_{i},\gamma_{i+1}$ respectively, intersecting at $p_{i+1}$. Since $\gamma_{i}\subset N(A_{i})$ and gated subgraphs contains its triangle, we have $C\subset N(A_{i})\cap N(N_{i+1})$. Again, we can replace the subpath $e_{1}e_{2}$ of $\gamma$ by a single edge contained in $C$ that joins end points of $e_{1}e_{2}$ which reduce $D$ by 1, contradicting to the minimality of $D$. Thus $D=d_{X}(p,q)$ as desired.
	
\end{proof}
We will prove the hyperbolicity of the factored contact graph by using the following criterion.
 \begin{proposition}[Bottleneck criterion,\cite{Man05}]
 	Let $X$ be a geomesic metric space. Suppose that there exists $\delta\geq 0$ such that for all $x,y\in X$, there exists a mid point $m=m(x,y)$ such that every path joining $x$ and $y$ intersects the $\delta$-ball about $m$. Then $X$ is $(26\delta, 16\delta)$-quasi-isometric to a tree.
 \end{proposition}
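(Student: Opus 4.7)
The plan is to follow Manning's original construction: build a tree $T$ directly from a net in $X$ and exhibit an explicit $(26\delta,16\delta)$-quasi-isometry between $X$ and $T$. First I would fix a maximal $\delta$-separated subset $S\subset X$ (a $\delta$-net, obtained by Zorn), and take its elements as the vertex set of a candidate graph $T$. The edge set should be defined by declaring $u\sim v$ precisely when $d_{X}(u,v)$ is bounded by a small multiple of $\delta$ (e.g.\ $8\delta$), chosen so that consecutive vertices along a covering of any $X$-geodesic by $S$-centered balls are adjacent in $T$. This immediately yields connectedness of $T$, since any two points of $S$ can be joined by following a geodesic in $X$ and passing to nearby net vertices.

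The core of the argument is showing that $T$ has no embedded cycle. Suppose for contradiction that a cycle exists; among all pairs of vertices $u,v$ lying on the cycle I would select a pair maximizing $d_{X}(u,v)$. The two arcs of the cycle from $u$ to $v$ each give continuous paths in $X$ by concatenating the short $X$-geodesics associated to successive edges of $T$. By the bottleneck hypothesis, a midpoint $m=m(u,v)$ exists such that both arcs must come within $\delta$ of $m$; on each arc pick a vertex $w_{i}\in S$ realizing this near-passage. The two chosen vertices then lie within $2\delta+\text{(edge scale)}$ of each other in $X$, while by maximality of $d_{X}(u,v)$ they cannot be adjacent on the cycle, so replacing the segment of the cycle between them by a short detour (or showing that one of them is already closer to the opposite arc than $u,v$ were to each other) contradicts the maximality assumption. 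This is the part I expect to be the main obstacle: squeezing the combinatorics so that the bottleneck near-midpoint forces an actual shortcut in the cycle while keeping the constants tight.

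Once $T$ is known to be a connected, acyclic graph (i.e.\ a tree) I would define $f\colon X\to T$ by sending each $x\in X$ to any nearest vertex of $S$, and verify it is an $(L,C)$-quasi-isometry with $L,C$ linear in $\delta$. Upper Lipschitz control comes directly from the netness of $S$ and the threshold used to define edges of $T$, while for the lower bound I would take a path in $T$ realizing $d_{T}(f(x),f(y))$, turn it into an actual path in $X$ of controlled length (each $T$-edge being witnessed by a short $X$-geodesic), and compare with $d_{X}(x,y)$. Tracking these constants carefully and optimizing the edge-length threshold in $T$ is what produces the explicit quasi-isometry constants $(26\delta,16\delta)$ cited in the statement; this final bookkeeping is routine once Steps 1--3 are in place.
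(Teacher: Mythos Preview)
The paper does not prove this proposition at all: it is stated as a black-box citation from Manning's paper \cite{Man05} and then immediately applied in the next lemma to the factored contact graph. There is therefore no ``paper's own proof'' to compare your attempt against.

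As for the sketch itself, it is a reasonable outline of how one might approach Manning's theorem, and it is in the right spirit (extract a combinatorial tree from the coarse geometry and check that the nearest-point map is a quasi-isometry). Be aware, though, that Manning's actual argument is organised somewhat differently from what you describe: he does not build the tree from a maximal $\delta$-separated net with a fixed edge-threshold, and the step ruling out cycles is structured differently. In particular, the specific constants $(26\delta,16\delta)$ come from Manning's particular construction and bookkeeping, so if you carry out your net-based version you should expect to arrive at different (but still linear in $\delta$) constants rather than exactly these. Your cycle-elimination step as written is also only a heuristic: the ``maximality of $d_X(u,v)$'' argument needs more care to actually produce a contradiction, since near-passage to the midpoint does not by itself force the two witnesses $w_1,w_2$ to be non-adjacent on the cycle or to yield a strictly shorter configuration.
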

\begin{lemma}
	The factored contact graphs are uniform quasi-tree.
\end{lemma}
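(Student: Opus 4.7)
The plan is to verify the Bottleneck criterion (Proposition 4.14) for $\widehat{CX}$ with a uniform constant, using the Hierarchy Path Lemma (Lemma 4.13) as the main tool, following the template of \cite{BHS17,Val18}. Fix $A,B\in V(\widehat{CX})$ with $m:=d_{\widehat{CX}}(A,B)$. Choose basepoints $p\in N(A)$, $q\in N(B)$ and apply Lemma 4.13 to obtain a geodesic $A=A_0,A_1,\dots,A_m=B$ and points $p_i\in N(A_{i-1})\cap N(A_i)$ (with $p_0=p$, $p_{m+1}=q$) realising $d_X(p,q)=\sum_{i=0}^{m} d_X(p_i,p_{i+1})$. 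Define the midpoint $M:=A_{\lfloor m/2\rfloor}$; our goal is to show that every path from $A$ to $B$ in $\widehat{CX}$ passes within a uniformly bounded neighbourhood of $M$.

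Next I would select a separating hyperplane. Since the ``ladder'' $p=p_0,p_1,\dots,p_{m+1}=q$ is an $X$-geodesic, any hyperplane $H$ of $X$ dual to an edge on the sub-geodesic between $p_{\lfloor m/2\rfloor}$ and $p_{\lfloor m/2\rfloor +1}$ separates $p$ from $q$, and its carrier $N(H)$ intersects $N(M)$ (or, if $M=b_F$ is a projection vertex, $N(H)$ intersects $F$, since the dual edge lives in the gated subgraph associated to $M$). Either way, $H$ is at $\widehat{CX}$-distance at most $2$ from $M$: when $M$ is a hyperplane vertex $H$ contacts $M$, and when $M=b_F$, $H$ is a hyperplane of $F$ and is therefore adjacent to $b_F$ by construction of $\widehat{CX}$.

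Now let $A=B_0,B_1,\dots,B_k=B$ be any path in $\widehat{CX}$. By the definition of edges in $\widehat{CX}$ (together with the convention $N(b_F)=F$ used in the proof of Lemma 4.13), consecutive carriers $N(B_{j-1})$ and $N(B_j)$ intersect; choose $q_j$ in the intersection and set $q_0=p$, $q_{k+1}=q$. The resulting concatenation is an $X$-path from $p$ to $q$ and therefore crosses $H$. Hence some segment $[q_j,q_{j+1}]\subset N(B_j)$ contains an edge dual to $H$, which means $H$ is a hyperplane of the gated subgraph $N(B_j)$. In particular $H$ either equals $B_j$, is adjacent to $B_j$ in $\widehat{CX}$ (if $B_j$ is a hyperplane vertex that crosses or osculates $H$), or $B_j$ is a projection vertex $b_F$ with $H$ a hyperplane of $F$ and thus adjacent to $b_F$. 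In every case $d_{\widehat{CX}}(B_j,H)\le 1$, and combining with $d_{\widehat{CX}}(H,M)\le 2$ gives $d_{\widehat{CX}}(B_j,M)\le 3$.

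Thus every path from $A$ to $B$ meets the ball of radius $3$ about $M$, and the Bottleneck criterion yields a uniform quasi-isometry between $\widehat{CX}$ and a tree, with constants independent of $X$. The main technical obstacle is the correct bookkeeping around projection vertices: one must be careful that the ``carrier'' convention $N(b_F)=F$ makes both the existence of the intermediate points $q_j$ and the claim ``$H$ is a hyperplane of $N(B_j)$ implies $B_j$ is close to $H$ in $\widehat{CX}$'' work uniformly for projection vertices as well as for hyperplane vertices, which is precisely what the additional edges introduced in the definition of $\widehat{CX}$ (joining each $b_F$ to the hyperplanes of $F$) are designed to ensure.
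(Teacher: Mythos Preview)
Your argument is essentially the paper's: apply the bottleneck criterion via the hierarchy path of Lemma~4.13, pick a hyperplane $H$ crossing the middle segment, and observe that any $\widehat{CX}$-path from $A$ to $B$ produces a chain of intersecting carriers which $H$ must cut, placing some $B_j$ at bounded distance from $M$. The only substantive difference is that the paper first disposes of the case $d_{\widehat{CX}}(A,B)<10$ and then chooses $p\in N(A)$, $q\in N(B)$ to be \emph{closest} points, so that $N(A)$ and $N(B)$ are genuinely separated and in particular $p\neq q$; you pick $p,q$ arbitrarily, which leaves open the degenerate possibility $p_{\lfloor m/2\rfloor}=p_{\lfloor m/2\rfloor+1}$ (or even $p=q$) where no edge, and hence no $H$, is available on the middle segment. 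This is easily repaired by the paper's closest-point choice or by simply shifting to a nearby index with a nontrivial segment, but as written that step is missing.
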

\begin{proof}
	Let $A,B\in\widehat{CX}$. If $d(A,B)<10$, then the 5-bottleneck criterion is automatically satisfied, so assume that $d(A,B)\geq 10$. In this case, the associated gated subgraphs for $A$ and $B$ are disjoint and we can find the closest points $p\in N(A),q\in N(B)$ such that any hyperplane separating $p$ and $q$ separates $N(A)$ and $N(B)$. Using lemma 4.13, find the hierarchy path $A=A_{0},A_{1},...,A_{m}=B$ and $p=p_{0},p_{1},...,p_{m+1}=q$. Let $M=A_{i}$ be a point of $V(\widehat{CX})$ which is closest to the midpoint of the hierarchy path. Then there exists hyperplanes $H$ separating $p_{i}$ and $p_{i+1}$. Since the hierarchy path is a geodesic in $X$, the hyperplane $H$ separates $p$, $q$ and hence separates $N(A)$ and $N(B)$ by our construction. Also $H$ is adjacent to $M$. Now suppose $\sigma\subset \widehat{CX}$ is any edge path joining $A$ and $B$. Since $H$ separates $N(A)$ and $N(B)$ and the adjacency in $\widehat{CX}$ implies non-empty intersection between their associated gated subgraphs, we conclude that there exists some vertex, say $C$, in the path $\sigma$ such that $H$ separates some points in $C$. It shows that $H$ is adjacent to $C$ and $M$, hence $C$ has distance at most $3$ from $M$, which shows the 3-bottlenect criterion of $V(\widehat{CX})$. Using the similar arguement, we can easily extend it to the 5-bottleneck criterion of  $\widehat{CX}$.
\end{proof}

\section*{Acknowledgement}
 The authors thank the referee for their careful reading of the manuscript and for giving various useful comments.
 \medskip
 
 \bibliographystyle{abbrv}
 \bibliography{bibilography}
\end{document}